\newtheorem{lemma}{\bf Lemma}[section]
\newtheorem{prop}[lemma]{\bf Proposition}
\newtheorem{thm}[lemma]{\bf Theorem}
\newcommand{\GL}{{\operatorname{GL}}}
\newcommand{\SL}{{\operatorname{SL}}}
\newcommand{\PSL}{{\operatorname{PSL}}}
\newcommand{\GU}{{\operatorname{GU}}}
\newcommand{\SU}{{\operatorname{SU}}}
\newcommand{\PSU}{{\operatorname{PSU}}}
\newcommand{\Sp}{{\operatorname{Sp}}}
\newcommand{\PSp}{{\operatorname{PSp}}}
\newcommand{\OO}{{\operatorname{P\Omega}}}
\newcommand{\GO}{{\operatorname{GO}}}
\DeclareMathOperator{\Aut}{Aut}
\title[]{On small Sylow numbers of finite groups}
\author{Xiaofang Gao}
\address{Xiaofang Gao. Departamento de Matem\'atica, Universidade de Bras\'ilia, Campus
Universit\'ario \\ Darcy Ribeiro, Bras\'ilia-DF, 70910-900, Brazil. }
\email{gaoxiaofang2020@hotmail.com}
\author{Igor Lima}
\address{Igor Lima. Departamento de Matem\'atica, Universidade de Bras\'ilia, Campus
Universit\'ario \\ Darcy Ribeiro, Bras\'ilia-DF, 70910-900, Brazil. }
\email{igor.matematico@gmail.com}
\author{Rulin Shen}
\address{Rulin Shen. Department of Mathematics, Hubei Minzu University \\ Enshi, Hubei Province,
445000, P. R. China. }
\email{shenrulin@hotmail.com}
\thanks{Project supported by the NSF of China (Grant No. 12161035)}
\date{}
\keywords{Sylow Subgroup, Non-abelian simple subgroup, Maximal subgroup}
\begin{document}

\setlength{\parskip}{2mm}

\maketitle

\begin{abstract}
Let $G$ be a finite group and $n_p(G)$ the number of Sylow $p$-subgroups of $G$. In this paper, we prove if $n_p(G)<p^2$ then
almost all numbers $n_p(G)$ are a power of a prime.
\end{abstract}

\section{Introduction}

Let $G$ be a finite group and $p$ a prime. We denote by $n_p(G)$ the number of Sylow $p$-subgroups of $G$, which is called Sylow $p$-number
 of $G$ (hereinafter referred to as Sylow number). The influence of the number of Sylow subgroups in finite groups on group structure is a very meaningful research topic. 
We called $n_p(G)$ a solvable Sylow number if its $\mathcal{\ell}$-part was congruent to $1$ modulo $p$ for any prime $\mathcal{\ell}$.
In 1967, Hall \cite{MHJ} studied the number of Sylow subgroups in
finite groups, and proved that solvable groups have solvable Sylow
numbers, and $22$ is never a Sylow $3$-number and $21$  a Sylow
$5$-number. In 1995, Zhang \cite{zhang} proved that a finite group $G$
is $p$-nilpotent if and only if $p$ is prime to every Sylow number
of $G$. It is easy to see
that $n_p(H)\leqslant n_p(G)$ for $H\leqslant G$, however $n_p(H)$ does not divide $n_p(G)$ in general. In 2003, Navarro \cite{GN} proved that if $G$ is
$p$-solvable, then $n_p(H)$ divides $n_p(G)$ for every $H\leqslant G$. In
2016 \cite{Litianze}, Li and Liu classified finite non-abelian simple
group with only solvable Sylow numbers. We say that a group $G$
satisfies $DivSyl(p)$ if $n_p(H)$ divides $n_p(G)$ for every $H\leqslant
G$. In 2018, Guo and Vdovin \cite{GV} generalized the results
of Navarro, and proved that $G$ satisfies $DivSyl(p)$ provided
every non-abelian composition factor of $G$ satisfies $DivSyl(p)$.
Recently, Wu \cite{Wu} proved that almost all finite simple groups do not
satisfy $DivSyl(p)$. \medskip

In this paper, we consider the numbers of Sylow subgroups $n_p(G)<p^2$, and prove that almost all numbers $n_p(G)$ are a power of a prime. Our main theorem is the following.

\begin{thm} \label{main}
Let $G$ be a finite group and $p$ a prime. Then the number $n_p(G)$ of Sylow $p$-subgroups of $G$ is less than $p^2$ if and only if $n_p(G)$ is a power of a prime being less than $p^2$ or one of 
the following cases:
\begin{enumerate}
\item $n_{13}(G)=12^2,$
\item $n_p(G)=1+p$, where $p$ is odd and not Mersenne,
\item $n_p(G)=1+(p-3)p/2$, where $p>3$ is Fermat,
\item $n_p(G)=1+(p+3)p/2$, where $p>3$ is Mersenne.
\end{enumerate}
\end{thm}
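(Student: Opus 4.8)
\emph{Proof sketch.} The ``if'' direction is routine: each number in the statement is visibly less than $p^2$ (using $p>3$ for (3) and (4)), and each is realised --- a prime power $q^a<p^2$ with $q^a\equiv 1\pmod p$ occurs for the Frobenius group $\mathbb{F}_{q^a}\rtimes\mathbb{Z}/p$, and (1)--(4) occur for $\PSL_3(3)$, $\PSL_2(p)$, $\PSL_2(p-1)$ and $\PSL_2(p+1)$ respectively (note $p\mp1$ is a $2$-power precisely when $p$ is Fermat, resp. Mersenne). So the content is the ``only if'' direction, which I would prove by induction on $|G|$: assume $G$ is a counterexample of least order, so $n_p(G)<p^2$ but $n_p(G)$ is neither a prime power nor one of (1)--(4).

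I would first collect the standard reduction tools: for $N\trianglelefteq G$ both $n_p(N)$ and $n_p(G/N)$ divide $n_p(G)$; if $p\nmid|N|$ then $n_p(G)=n_p(G/N)\cdot n_p(NP)$ with $P\in\mathrm{Syl}_p(G)$ and the two factors each $\equiv 1\pmod p$; and, since a $p$-solvable group has a solvable Sylow number, if $G$ is $p$-solvable and $n_p(G)<p^2$ then $n_p(G)$ is a product of prime powers each $\equiv1\pmod p$ and hence --- at most one of them exceeds $1$, else the product is $\ge(p+1)^2>p^2$ --- a single prime power. From these: $n_p(O^{p'}(G))=n_p(G)$ gives $G=O^{p'}(G)$; if $O_{p'}(G)\ne1$ then one factor of the product formula equals $1$, so either $G/O_{p'}(G)$ is a smaller counterexample or $G$ is $p$-solvable (hence $n_p(G)$ a prime power), a contradiction in both cases, so $O_{p'}(G)=1$; and $n_p(G)=n_p(G/O_p(G))$ gives $O_p(G)=1$. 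Therefore $F^*(G)=E(G)$ is a direct product $S_1\times\cdots\times S_t$ of non-abelian simple groups, each with $p\mid|S_i|$ (a $p'$-orbit of components would generate a nontrivial normal $p'$-subgroup), and since $n_p$ is multiplicative over direct products and $n_p(S_i)\ge p+1$, we get $t=1$. Thus $G$ is almost simple with socle $S$, $p\mid|S|$, and $p+1\le n_p(S)\mid n_p(G)<p^2$.

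Now I would invoke the classification of finite simple groups. \textbf{Defining characteristic.} If $S$ is of Lie type in characteristic $p$ then $N_S(P)$ is a Borel subgroup $B$, so $n_p(S)=[S:B]$; since $[S:B]\ge q^2$ ($q$ the field size) unless $S\cong\PSL_2(q)$, and $q+1<p^2$ forces $q=p$, while the twisted rank-one groups $\Sz(q),\Ree(q),\PSU_3(q)$ all have $n_p(S)>p^2$, we are left with $S=\PSL_2(p)$; then $G\in\{\PSL_2(p),\PGL_2(p)\}$ and $n_p(G)=p+1$, which is a $2$-power when $p$ is Mersenne and is case (2) otherwise --- contradiction. \textbf{The remaining cases} --- $S$ of Lie type in cross characteristic $\ell\ne p$, $S$ alternating, $S$ sporadic --- are the heart of the matter. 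For $S$ of Lie type over $\mathbb{F}_\ell$ with $d=\mathrm{ord}_p(\ell)$, a Sylow $p$-subgroup $P$ lies in, and its normaliser is controlled (via the relative Weyl group) by, the normaliser of a maximal torus of order divisible by $\Phi_d(\ell)$; one bounds $n_p(S)=[S:N_S(P)]$ below by a fixed polynomial in $\ell$ of large degree, whereas $p\le\ell^d-1$ gives $p^2<\ell^{2d}$, so $n_p(S)<p^2$ forces small Lie rank and small $\ell$. Zsigmondy's theorem is the key point: for all but finitely many pairs $(\ell,d)$ there is a primitive prime divisor $p$ of $\ell^d-1$, and for such $p$ one verifies $n_p(S)\ge p^2$; the finitely many remaining configurations are treated by direct computation with explicit Sylow-normaliser orders, and the passage from $S$ to the almost simple $G$ (tracking diagonal, field and graph automorphisms, which alter $[G:N_G(P)]$ only by controlled factors, and in the key cases not at all) is handled case by case. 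The outcome is that the only almost simple $G$ with $n_p(G)<p^2$ and $n_p(G)$ not a prime power are: $S=\PSL_2(p-1)$, $p$ Fermat, $n_p(G)=\tfrac{(p-1)(p-2)}{2}=1+\tfrac{(p-3)p}{2}$ (case (3)); $S=\PSL_2(p+1)$, $p$ Mersenne, $n_p(G)=\tfrac{(p+1)(p+2)}{2}=1+\tfrac{(p+3)p}{2}$ (case (4)); and $S=\PSL_3(3)$, $p=13$, $n_{13}(G)=144$ (case (1)). For $S=A_n$ ($n\ge5$) one checks $n_p(A_n)<p^2$ only for small $n$ already covered by exceptional isomorphisms ($A_5\cong\PSL_2(5)$, $A_6\cong\PSL_2(9)$, $A_8\cong\PSL_4(2)$, $\dots$), and for $S$ sporadic a finite check with the known subgroup data yields no further exceptions. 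In every case $n_p(G)$ is a prime power or one of (1)--(4), contradicting the choice of $G$.

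The main obstacle is clearly the cross-characteristic Lie type analysis: converting the inequality $n_p(S)<p^2$ into an effective bound on the rank and the field, and then correctly enumerating and evaluating the small surviving cases (including the exceptional isomorphisms and the diagonal/field/graph automorphisms). This is where one needs the detailed machinery of maximal tori and their normalisers in finite groups of Lie type, Zsigmondy primes, and the structure of outer automorphism groups.
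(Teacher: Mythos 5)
Your overall architecture (reduce to a single simple group, then run through the classification with Zsigmondy primes, maximal tori and minimal permutation degrees) matches the paper's, and your final list agrees with its Theorem \ref{sylowsimple}; but your reduction step is genuinely different. The paper gets from an arbitrary $G$ to a simple group in three lines via M.~Hall's product theorem (Theorem \ref{MHJ}): $n_p(G)$ is a product of Sylow numbers of simple groups and of prime powers $\equiv 1\pmod p$, every nontrivial factor exceeds $p$, so if $n_p(G)<p^2$ is not a prime power then exactly one factor survives and it is $n_p(S)$ for a simple $S$. Your minimal-counterexample argument through $O^{p'}(G)$, $O_{p'}(G)$, $O_p(G)$ and $F^*(G)$ is also sound (the steps you list all check out, including $t=1$ from multiplicativity over the direct product of components), but it is longer and lands on an \emph{almost simple} $G$, which you then propose to handle by tracking diagonal, field and graph automorphisms case by case. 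That extra labour is unnecessary: since $S\unlhd G$, Hall's formula gives $n_p(S)\mid n_p(G)$, both are $\equiv 1\pmod p$ and $n_p(G)<p^2$, so $n_p(G)=n_p(S)$ outright (this is exactly the paper's Lemma \ref{equ}); the classification of simple groups suffices and no automorphism analysis is needed. So Hall's theorem is the key lemma your route replaces, at the cost of a heavier reduction and a superfluous almost-simple layer.

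Be aware that what you present for the cross-characteristic Lie-type analysis is a programme rather than a proof: ``one bounds $n_p(S)$ below by a polynomial of large degree'' and ``one verifies $n_p(S)\geqslant p^2$'' is precisely the content of the paper's Propositions \ref{linearsimple}--\ref{E_8(q)simple}, which occupy most of the paper and where the delicate cases live (e.g.\ $e=2$ with $p=q+1$ and $e=1$ with $p=q-1$ in $\PSL_n(q)$, which are exactly where the Fermat and Mersenne exceptions (3) and (4) emerge, and the $\PSL_3(3)$ exception at $e=3$, $q=3$, $p=13$). Your sketch correctly identifies where the exceptions come from and why $p\mp 1$ being a $2$-power governs whether $p+1$ collapses to a prime power, so the skeleton is right; but as written the hard part of the theorem is asserted, not proved.
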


\begin{thm} \label{maincorollary}
Let $G$ be a finite group and $p$ not a Mersenne prime that is greater than $3$. Assume that the number $n_p(G)$ of Sylow $p$-subgroups of
$G$ is less than $p^2$. Then $G$ is $p$-solvable if and only if $n_p(G)$ is a power of a prime.
\end{thm}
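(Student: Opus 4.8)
The plan is to read Theorem~\ref{maincorollary} off Theorem~\ref{main} by pairing the algebraic condition ``$G$ is $p$-solvable'' with the arithmetic condition ``$n_p(G)$ is a prime power''. Since $p$ is not a Mersenne prime greater than $3$, item~(4) of Theorem~\ref{main} cannot occur, so when $n_p(G)<p^2$ the number $n_p(G)$ is either a prime power less than $p^2$ or one of the values in (1), (2), (3). Hence it suffices to prove: (a) if $G$ is $p$-solvable then $n_p(G)$ is a prime power; and (b) if $G$ is not $p$-solvable then $n_p(G)$ is not a prime power.

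For (a) I would show, by induction on $|G|$, that a $p$-solvable group with $n_p(G)<p^2$ has prime-power Sylow $p$-number. Take a minimal normal subgroup $N$. If $N$ is a $p$-group then $n_p(G)=n_p(G/N)$ and induction applies. If $N$ is a $p'$-group then, for $P\in\mathrm{Syl}_p(G)$, one has $n_p(G)=n_p(G/N)\cdot|N:N_N(P)|$ with both factors $\equiv 1\pmod p$, so the bound $n_p(G)<p^2$ forces one factor to equal $1$; unless $G=NP$ this reduces the problem to $G/N$ or to $NP$ and we conclude by induction. The remaining terminal case $G=NP$ with $N$ a minimal normal $p'$-subgroup is treated directly: if $N$ is solvable then $G$ is solvable, and Hall's theorem \cite{MHJ} shows $n_p(G)$ is a \emph{solvable} Sylow number, i.e.\ each of its prime-power parts is $\equiv 1\pmod p$; if instead $N=S^k$ with $S$ nonabelian simple (necessarily $p\nmid|S|$, so $S$ carries a nontrivial automorphism of $p$-power order), then $|N:N_N(P)|=|N:C_N(P)|$ is at least the smallest index of a proper subgroup of $S$, and for every such $S$ (field automorphisms of groups of Lie type over $\mathbb{F}_{r^p}$, $\Sz(2^p)$, and the like) this is much larger than $p^2$, contradicting $n_p(G)<p^2$. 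Finally, a solvable Sylow number smaller than $p^2$ must be a prime power: if two distinct primes $q_1\neq q_2$ divided it, their contributions would each be $\geq 1+p$ and the product would already be $\geq(1+p)^2>p^2$.

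For (b), suppose $G$ is not $p$-solvable. The proof of Theorem~\ref{main} shows that in this case (with $n_p(G)<p^2$) the value $n_p(G)$ is one of the four listed in (1)--(4); item~(4) is excluded by the hypothesis on $p$, and the other three are never prime powers: in (1), $n_{13}(G)=12^2=2^4\cdot 3^2$; in (2), $1+p$ is even, so it is a prime power only if it is a power of $2$, which would make $p=2^a-1$ a Mersenne prime, contrary to hypothesis; and in (3), writing $p=2^{2^k}+1$ one computes $1+(p-3)p/2=2^{2^k-1}\bigl(2^{2^k}-1\bigr)$, which is divisible both by $2$ and by the odd number $2^{2^k}-1>1$. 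Hence $n_p(G)$ is not a prime power, and (a) together with (b) gives the theorem.

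The step I expect to be the main obstacle is the terminal case $G=NP$ in part~(a): ruling out the non-solvable possibilities requires, for every nonabelian simple group $S$ with $p\nmid|S|$ and every $p$-subgroup $P$ of $\Out(S)$ (together with the associated diagonal actions on powers $S^k$), the lower bound $|S:N_S(P)|>p^2$ — information that, like the non-$p$-solvable half of Theorem~\ref{main} itself, rests on the classification of finite simple groups. Everything else is arithmetic bookkeeping layered on top of Theorem~\ref{main}.
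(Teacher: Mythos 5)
Your decomposition into the two implications matches the paper's, and your arithmetic in (b) (that $144$, $1+p$ for non-Mersenne $p$, and $1+(p-3)p/2$ for Fermat $p>3$ are never prime powers) is exactly what the paper verifies via Lemma~\ref{number}. But (b) as written has a genuine gap: you assert that ``the proof of Theorem~\ref{main} shows'' that a non-$p$-solvable $G$ with $n_p(G)<p^2$ has $n_p(G)$ among the values (1)--(4). The proof of Theorem~\ref{main} establishes only that a \emph{non-prime-power} value $n_p(G)<p^2$ equals $n_p(S)$ for some simple $S$; it says nothing about $p$-solvability, so invoking it here is circular, since whether $n_p(G)$ is a prime power is precisely what you are trying to decide. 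The missing link, which the paper supplies, is: a non-$p$-solvable $G$ has a nonabelian chief factor $S^m$ with $p\mid|S|$; Lemma~\ref{normal} gives $n_p(S)\mid n_p(G)$, and Lemma~\ref{equ} upgrades this to $n_p(S)=n_p(G)$ because $n_p(S)\neq 1$; only then does Theorem~\ref{sylowsimple} place $n_p(G)$ in the list (1)--(4).

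For (a) you take a genuinely different route, and the step you yourself flag as the main obstacle is where the argument is incomplete: in the terminal case $G=NP$ with $N=S^k$ a nonabelian minimal normal $p'$-subgroup, you need $|N:C_N(P)|>p^2$ for every nonabelian simple $p'$-group $S$ admitting an automorphism of order $p$ (together with the actions permuting the $k$ factors), and this is a nontrivial classification-based case analysis that the proposal does not carry out. The paper avoids it entirely: it takes a \emph{maximal} normal subgroup $M$ (so $G/M\cong C_p$ or a simple $p'$-group), uses Lemma~\ref{normal}, Lemma~\ref{equ} and minimality to reduce to $G=M\rtimes C_p$ with $n_p(M)=1$, and then, by coprime action, picks a $P$-invariant Sylow $r$-subgroup $R$ of $M$ for each $r\in\pi(M)$; Navarro's theorem \cite{GN} gives $n_p(RP)\mid n_p(G)$, so by Lemma~\ref{equ} either $n_p(RP)=1$ or $n_p(RP)=n_p(G)$, and the latter is impossible since minimality would force the solvable group $RP$ to equal the nonsolvable $G$. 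Hence $P$ centralizes every $R$, so it centralizes $M$ and $n_p(G)=1$, a contradiction. This argument is classification-free at this stage and is the idea your proposal is missing; if you want to keep your minimal-normal-subgroup induction, you must either supply the CFSG verification for the terminal case or replace it with the Navarro-plus-coprime-action reduction.
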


\section{Preliminary results}

In this section we introduce and prove some useful results.

\begin{lemma}[Zsigmondy's Theorem \cite{KZ}]\label{Zsig}
Let $a$ and $n$ be integers greater $1$. Then there exists a prime divisor $q$ of $a^n-1$ such that $q$ does not divide $a^j-1$ for all $0<j<n$, except exactly the following cases:
\begin{enumerate}
    \item $n=2$, $a=2^s-1$, where $s\geqslant 2$.
    \item $n=6$, $a=2$.
\end{enumerate}
\end{lemma}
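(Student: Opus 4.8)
The plan is to prove the existence of a \emph{primitive prime divisor} of $a^n-1$, that is, a prime $q$ dividing $a^n-1$ but dividing no $a^j-1$ with $0<j<n$, and then to isolate the two exceptional families by a finite check. The basic translation is that $q$ is a primitive prime divisor of $a^n-1$ precisely when the multiplicative order $\operatorname{ord}_q(a)$ equals $n$. Using the cyclotomic factorization $a^n-1=\prod_{d\mid n}\Phi_d(a)$, every prime dividing $a^n-1$ has order dividing $n$, and the primes of order exactly $n$ are exactly the prime divisors of $\Phi_n(a)$ of order $n$. Thus the theorem reduces to showing that $\Phi_n(a)$ has a prime divisor of order $n$, the stated exceptions being precisely the pairs where it does not.

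First I would establish the structural lemma on the primes dividing $\Phi_n(a)$ that have order $d<n$ (the ``non-primitive'' divisors). If $q\mid\Phi_n(a)$ with $d=\operatorname{ord}_q(a)<n$, then $q\mid a^n-1$ forces $d\mid n$, and a short argument with the factorization shows $n/d$ is a power of $q$; since $d\mid q-1$, every prime factor of $d$ is smaller than $q$, so $q$ is the \emph{largest} prime factor of $n$. As the largest prime factor is unique, $\Phi_n(a)$ has at most one non-primitive prime divisor. The exponent to which it occurs is controlled by the lifting-the-exponent lemma: writing $n=dq^k$ and comparing $v_q(a^{dq^{i}}-1)=v_q(a^d-1)+i$ across consecutive $i$, one gets $v_q(\Phi_n(a))=1$ for every odd $q$. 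The only place this breaks down is $q=2$, which, being the largest prime of $n$, forces $n$ to be a power of $2$, and a direct look shows the $2$-adic valuation can exceed $1$ only when $n=2$.

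Given this, $\Phi_n(a)$ has no primitive prime divisor only if it equals the prime power contributed by its single non-primitive prime $q$; for odd $q$ this means $\Phi_n(a)=q\leq n$. The analytic step is therefore a lower bound forcing $\Phi_n(a)>n$ outside a finite range. Writing $\Phi_n(a)=\prod_{\gcd(k,n)=1}(a-\zeta_n^{k})$ and using $|a-\zeta|\geq a-1$ with strict inequality away from $\zeta=1$, one obtains $\Phi_n(a)>(a-1)^{\phi(n)}$, which already exceeds $n$ for all $a\geq 3$ except small $n$. The degenerate case $a=2$, where this bound is vacuous, requires a sharper estimate on $\Phi_n(2)$, and this is precisely the regime in which the second exception hides. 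In all but finitely many pairs $(a,n)$ the bound gives $\Phi_n(a)>n\geq q$, producing a primitive prime divisor.

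The main obstacle is the finite residual range where the estimate fails, and it is here that the two genuine exceptions appear. The borderline $q=2$ case of the structural lemma is exactly exception (1): for $n=2$ one has $\Phi_2(a)=a+1$, whose only possible non-primitive prime is $2$, so $a^2-1$ lacks a primitive prime divisor iff $a+1$ is a power of $2$, i.e. $a=2^s-1$ with $s\geq2$. The small-$a$ failure of the size bound yields exception (2): for $a=2$, $n=6$ one computes $\Phi_6(2)=3$ and $2^6-1=3^2\cdot7$, and since $\operatorname{ord}_3(2)=2$ and $\operatorname{ord}_7(2)=3$ neither prime has order $6$. I expect the delicate points to be the $2$-adic bookkeeping in the structural lemma and the exhaustive verification over the explicit finite window that no further exceptions arise; the remainder is standard cyclotomic machinery.
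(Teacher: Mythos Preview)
The paper does not prove this lemma; it is stated with a citation to Zsigmondy's original paper \cite{KZ} and used as a black box, so there is no ``paper's own proof'' to compare against. Your sketch is the standard cyclotomic-polynomial proof of Zsigmondy's theorem and the strategy is sound: reduce to finding a prime divisor of $\Phi_n(a)$ of exact order $n$, show that any non-primitive prime divisor of $\Phi_n(a)$ must be the largest prime $q$ dividing $n$ and (for odd $q$) occurs to the first power only, then use a size estimate $\Phi_n(a)>(a-1)^{\phi(n)}$ to force $\Phi_n(a)>q$ outside a finite range, and finish by inspection. The two exceptions emerge exactly where you say: $n=2$ from the anomalous $2$-adic behaviour of $\Phi_2(a)=a+1$, and $(a,n)=(2,6)$ from the collapse of the size bound at $a=2$. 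For a complete write-up you would still need to make the finite check explicit (in particular for $a=2$ the bound $(a-1)^{\phi(n)}$ is useless and one typically uses $\Phi_n(2)\geq 2^{\phi(n)}\prod_{d\mid n}(1-2^{-d})$ or a similar product estimate), but the architecture is correct.
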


Note that the prime divisor $q$ of $a^n-1$ as described above called a primitive prime or Zsigmondy prime. 

\begin{lemma}\label{number}
 Let $p=2^n+1$ be a Fermat prime  and $q$ prime. If $p^2-3p+2=2q^a$ where $a$ is a positive integer, then $(n,p,q,a)=(1, 3, 1,1)$.
\end{lemma}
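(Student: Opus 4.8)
The plan is to exploit the factorization $p^2-3p+2=(p-1)(p-2)$ together with the special shape $p=2^n+1$. Substituting $p-1=2^n$ and $p-2=2^n-1$ turns the hypothesis $p^2-3p+2=2q^a$ into
\[
2^n(2^n-1)=2q^a,\qquad\text{equivalently}\qquad q^a=2^{n-1}(2^n-1).
\]
So the whole lemma reduces to understanding when the integer $2^{n-1}(2^n-1)$ can be a prime power.

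Next I would split on $n$. If $n=1$, then $p=3$ and the right-hand side equals $1$, so $q^a=1$; this is exactly the degenerate quadruple $(n,p,q,a)=(1,3,1,1)$ recorded in the statement (with $q=1$ understood as the empty prime power, and the hypothesis $p^2-3p+2=2$ holding since $9-9+2=2$). This is the only case that survives.

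For $n\geqslant 2$ I would argue by coprimality. Here $2^{n-1}\geqslant 2$ and $2^n-1\geqslant 3$ are coprime integers, both strictly larger than $1$ (the first because $n\geqslant 2$, the second because $n\geqslant 2$, and coprime because $2^n-1$ is odd). Hence their product $q^a$ is divisible both by $2$ and by some odd prime divisor of $2^n-1$, so $q^a$ has at least two distinct prime divisors, contradicting the assumption that $q$ is prime (and $a\geqslant 1$). Therefore no solution exists with $n\geqslant 2$, and the lemma follows.

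I do not anticipate a real obstacle: the argument is essentially the elementary fact that a prime power cannot be written as a product of two coprime integers each exceeding $1$, so neither Zsigmondy's theorem nor any deeper input is needed. The only point deserving a word of care is the bookkeeping around $n=1$, where $q^a=1$; this is precisely why the extremal solution in the statement has $q=1$ rather than a genuine prime, and one could equivalently phrase the conclusion as: $p^2-3p+2\in\{2\}\cup\{2q^a: q\ \text{prime},\, a\geqslant 1\}$ forces $p=3$.
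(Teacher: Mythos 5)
Your proof is correct and follows essentially the same route as the paper: reduce $p^2-3p+2=2q^a$ to $q^a=2^{n-1}(2^n-1)$ and observe that a prime power cannot be a product of two coprime factors both exceeding $1$, forcing $n=1$. Your explicit handling of the degenerate case $q^a=1$ (and the remark that the quadruple $(1,3,1,1)$ has $q=1$ rather than a genuine prime) is if anything slightly more careful than the paper's own wording.
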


\begin{proof}
Since $p=2^n+1$, $p^2-3p+2=2^{2n}-2^n=2q^a$, this implies that $2^{n-1}(2^n-1)=q^a$. Note that $(2^{n-1}, 2^n-1)=1$ and $q^a$ has only one prime divisor, so either $2^{n-1}=1$ or $2^n-1=1$. Thus $n=q=a=1$ and $p=3$.  
\end{proof}

\begin{lemma}\label{primitive}
  Let $p$ be an odd prime and $e=min\{k\geqslant 1|q^{2k}\equiv 1(\bmod ~p)\}$. Then $p$ is a primitive prime divisor of $q^e-1$ or $q^{2e}-1$.  
\end{lemma}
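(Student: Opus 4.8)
The plan is to reduce the whole statement to a fact about the multiplicative order of $q$ modulo $p$. First I would note that the set $\{k\geqslant 1 : q^{2k}\equiv 1\ (\bmod\ p)\}$ is nonempty only when $\gcd(q,p)=1$, so the hypothesis implicitly forces $p\nmid q$, and hence $q$ has a well-defined multiplicative order $d:=\mathrm{ord}_p(q)$ modulo $p$. Recall that, with the terminology introduced after Lemma~\ref{Zsig}, $p$ is a primitive prime divisor of $q^m-1$ exactly when $p\mid q^m-1$ and $p\nmid q^j-1$ for all $0<j<m$, i.e.\ exactly when $d=m$. So the lemma is equivalent to the single assertion that $d\in\{e,2e\}$.

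Next I would compute $e$ in terms of $d$. Since $q^{2k}\equiv 1\ (\bmod\ p)$ if and only if $d\mid 2k$, the least positive such $k$ equals $d$ when $d$ is odd (because then $\gcd(d,2)=1$ forces $d\mid k$) and equals $d/2$ when $d$ is even. Thus $e=d$ if $d$ is odd, and $e=d/2$ if $d$ is even.

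Finally I would split into the two parity cases. If $d$ is odd, then $d=e$, so $\mathrm{ord}_p(q)=e$ and $p$ is a primitive prime divisor of $q^e-1$. If $d$ is even, then $d=2e$, so $\mathrm{ord}_p(q)=2e$ and $p$ is a primitive prime divisor of $q^{2e}-1$. In either case the conclusion holds, which finishes the argument.

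There is essentially no genuine obstacle here: the statement is just bookkeeping about orders, and the only point needing a moment's care is the parity split that produces $e$ from $d$ (and observing that the degenerate values $d=1$ or $d=2$, which give $e=1$, are harmless since the defining condition for a primitive prime divisor is vacuous for exponent $1$). Note that Zsigmondy's theorem itself is not needed for this lemma --- only the notion of primitive prime divisor that it introduces --- though presumably this lemma is exactly the device that later lets one convert a divisibility $p\mid q^m-1$ into information about a controlled exponent.
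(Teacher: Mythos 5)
Your proof is correct and amounts to the same elementary argument as the paper's, just phrased in terms of the multiplicative order $d=\mathrm{ord}_p(q)$: your parity split on $d$ corresponds exactly to the paper's case split on whether $p$ divides $q^e-1$ or $q^e+1$. If anything your version is slightly cleaner, since computing $e$ from $d$ directly avoids the proof-by-contradiction the paper uses in the second case.
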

\begin{proof}
   Since $p|(q^{2e}-1)$, $p$ divides $q^e+1$ or $q^e-1$. If $p|(q^e-1)$, then $e$ is odd by the minimality of $e$, this implies that $p$ is a primitive prime divisor of $q^e-1$.
   Now assume that $p$ divides $q^e+1$, then $q^e-1$ is not divisible by $p$ since $(q^e-1,q^e+1)=2$. We claim that $p$ is a primitive prime divisor of $q^{2e}-1$. If this was not the case, then there would exist a minimal integer $k$ with $k<2e$ such that $p|(q^k-1)$, this implies that $k|2e$. By the minimality of $e$, $k$ would be odd, implying $k|e$. Write $e=ks$ where $s>1$ (since $p$ does not divide $q^e-1$). Then
   $$q^e-1=q^{ks}-1=(q^k-1)((q^k)^{s-1}+(q^k)^{s-2}+\cdots +q^k+1).$$
   Note that $p|(q^k-1)$, so $p$ would divide $q^e-1$, a contradiction.
\end{proof}

\begin{lemma}[Theorem 2.1 of \cite{MHJ}]\label{normal}
  Let $G$ have a normal subgroup $K$ and let $P$ be a Sylow $p$-subgroup of $G$. Then $$n_p(G)=n_p(G/K)n_p(K)n_p(C)$$ where $C=N_{PK}(P\cap K)/P\cap K$.  
\end{lemma}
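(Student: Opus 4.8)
The plan is to count the Sylow $p$-subgroups of $G$ in two stages: first relate them to those of $G/K$, then to those of the subgroup $PK$, and finally analyse $PK$ directly. A preliminary remark used throughout is that $P\cap K$ is a Sylow $p$-subgroup of $K$: since $PK/K\cong P/(P\cap K)$ is a $p$-group, $[K:P\cap K]=[PK:P]$ divides $[G:P]$ and is therefore prime to $p$. Likewise $|PK|_p=|P|$, so the Sylow $p$-subgroups of $PK$ are exactly the Sylow $p$-subgroups of $G$ that lie inside $PK$.

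For the first stage I would show $n_p(G)=n_p(G/K)\,n_p(PK)$. The map sending a Sylow $p$-subgroup $Q$ of $G$ to $QK/K$ is, since $K\trianglelefteq G$, a surjection onto the Sylow $p$-subgroups of $G/K$, and it is equivariant for the conjugation actions of $G$ on both sides, each of which is transitive; hence all its fibres have the same cardinality. The fibre over $PK/K$ is $\{Q : QK=PK\}$, and I would check that this set is precisely the set of Sylow $p$-subgroups of $PK$ (if $QK=PK$ then $Q\le PK$ and $|Q|=|PK|_p$; conversely a Sylow $p$-subgroup $Q$ of $PK$ maps onto a Sylow $p$-subgroup of $PK/K$, which is all of $PK/K$, so $QK=PK$). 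Thus the common fibre size equals $n_p(PK)$ and the factorisation follows.

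For the second stage I would prove $n_p(PK)=n_p(K)\,n_p(C)$ by partitioning the Sylow $p$-subgroups $Q$ of $PK$ according to the Sylow $p$-subgroup $Q\cap K$ of $K$ they contain. Fix a Sylow $p$-subgroup $R$ of $K$. I claim the $Q$ with $Q\cap K=R$ are exactly the Sylow $p$-subgroups of $N:=N_{PK}(R)$. If $Q\cap K=R$ then $R\trianglelefteq Q$, so $Q\le N$, and $Q$ is a $p$-subgroup of $N$ of order $|P|$; a Frattini argument applied to $K\trianglelefteq PK$ gives $PK=K\,N$, whence $|N|=[PK:K]\,|N_K(R)|$ and, taking $p$-parts, $|N|_p=[PK:K]\,|R|=|P|$, so $Q$ is actually a Sylow $p$-subgroup of $N$. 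Conversely, $R$ being a normal $p$-subgroup of $N$ lies in every Sylow $p$-subgroup $Q$ of $N$; since $R$ is the unique Sylow $p$-subgroup of $N_K(R)$ and $Q\cap K$ is a $p$-subgroup of $N_K(R)$ containing $R$, we get $Q\cap K=R$. As all choices of $R$ are conjugate in $K$, the subgroups $N_{PK}(R)$ are conjugate in $PK$, so each contributes the same number $n_p\!\left(N_{PK}(P\cap K)\right)$ of Sylow $p$-subgroups; and since $P\cap K$ is a normal $p$-subgroup of $N_{PK}(P\cap K)$, passing to the quotient gives $n_p\!\left(N_{PK}(P\cap K)\right)=n_p(C)$ with $C=N_{PK}(P\cap K)/(P\cap K)$. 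Summing over the $n_p(K)$ Sylow $p$-subgroups $R$ of $K$ yields $n_p(PK)=n_p(K)\,n_p(C)$, and combining with the first stage proves the lemma.

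The step I expect to demand the most care is the middle claim: identifying, for a fixed Sylow $p$-subgroup $R$ of $K$, the Sylow $p$-subgroups of $PK$ meeting $K$ exactly in $R$ with the Sylow $p$-subgroups of $N_{PK}(R)$. This rests on the Frattini-argument order computation $|N_{PK}(R)|_p=|P|$ and on the two containments $R\le Q\cap K\le R$; once these are in place, the remaining assembly is routine bookkeeping with Sylow's theorems and the correspondence theorem.
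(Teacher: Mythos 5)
Your proof is correct and complete. Note that the paper does not prove this lemma at all --- it is quoted as Theorem 2.1 of M.~Hall's 1967 paper --- so there is no internal argument to compare against; your two-stage factorisation (first $n_p(G)=n_p(G/K)\,n_p(PK)$ via the equivariant surjection $Q\mapsto QK/K$, then $n_p(PK)=n_p(K)\,n_p(C)$ by partitioning the Sylow $p$-subgroups of $PK$ according to their intersection with $K$ and using the Frattini computation $|N_{PK}(R)|_p=|P|$) is essentially Hall's original argument, and the steps you flag as delicate do all go through.
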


\begin{lemma}[Lemma 2.7 of \cite{Litianze}]\label{Litianze}
  Let $G$ be a finite group, $H$ a normal subgroup of $G$, and $p$ a prime. If $p$ does not divide $|G/H|$ then $n_p(H/Z(H))=n_p(G)$. 
\end{lemma}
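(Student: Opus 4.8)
The plan is to prove the chain of equalities $n_p(G)=n_p(H)=n_p\big(H/Z(H)\big)$, the first by a Frattini argument and the second by Lemma~\ref{normal}. Throughout, note that since $p\nmid|G/H|$ we have $|G|_p=|H|_p$, so any Sylow $p$-subgroup $P$ of $G$ lies in $H$ (the image of $P$ in the $p'$-group $G/H$ is trivial) and is then a Sylow $p$-subgroup of $H$.

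First I would show $n_p(G)=n_p(H)$. With $P$ as above, the Frattini argument applied to $H\trianglelefteq G$ and the Sylow $p$-subgroup $P$ of $H$ gives $G=H\,N_G(P)$, so
\[ n_p(G)=[G:N_G(P)]=[H\,N_G(P):N_G(P)]=[H:H\cap N_G(P)]=[H:N_H(P)]=n_p(H). \]
Equivalently, this is Lemma~\ref{normal} with $K=H$: the factor $n_p(G/H)$ equals $1$ since $p\nmid|G/H|$, and the correction factor $N_H(P)/P$ is a $p'$-group, hence contributes $1$.

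Next I would show $n_p(H)=n_p(H/Z(H))$. Put $Z=Z(H)$ and let $Z_p$ be the (unique) Sylow $p$-subgroup of the abelian group $Z$. Since $Z_p$ is a normal $p$-subgroup of $H$ it lies in $P$, hence $P\cap Z=Z_p$. Apply Lemma~\ref{normal} to $H$ with $K=Z$: $n_p(H)=n_p(H/Z)\,n_p(Z)\,n_p(C)$ with $C=N_{PZ}(P\cap Z)/(P\cap Z)$. Here $n_p(Z)=1$ as $Z$ is abelian. The key point is that $Z$ centralizes $P$ (it centralizes all of $H$), so $P\trianglelefteq PZ$; and $P\cap Z=Z_p$ is characteristic in $Z\trianglelefteq PZ$, hence normal in $PZ$, so $N_{PZ}(Z_p)=PZ$. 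Therefore $C=PZ/Z_p$ has the normal Sylow $p$-subgroup $P/Z_p$, whence $n_p(C)=1$. Thus $n_p(H)=n_p(H/Z)$, and combining with the first step, $n_p(G)=n_p(H/Z(H))$.

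I do not foresee a genuine obstacle: the statement is short and structural. The only places to take care are the two elementary facts supplied by the hypothesis $p\nmid|G/H|$ --- that a Sylow $p$-subgroup of $G$ may be taken inside $H$, and that $P\cap Z(H)$ is the full Sylow $p$-subgroup of $Z(H)$ --- together with the observation that $Z(H)$ centralizes $P$, which is exactly what collapses the correction factor in Lemma~\ref{normal} to $1$.
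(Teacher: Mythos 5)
Your argument is correct. Note that the paper itself gives no proof of this statement---it is quoted verbatim as Lemma~2.7 of \cite{Litianze}---so there is nothing internal to compare against; your self-contained derivation (Frattini argument for $n_p(G)=n_p(H)$, then Hall's product formula, Lemma~\ref{normal}, with $K=Z(H)$, using that $Z(H)$ centralizes $P$ and that $P\cap Z(H)$ is the unique Sylow $p$-subgroup of $Z(H)$ to kill both correction factors) is sound and is exactly the kind of argument the cited source uses.
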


\begin{lemma} \label{sylowstructure}
Let $S$ be a non-abelian simple group, let $P$ be a Sylow $p$-subgroup of $S$. If $n_p(S)<p^2$, then $P\cong C_p$.
\end{lemma}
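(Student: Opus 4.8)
The plan is to study the faithful transitive conjugation action of $S$ on the set $\mathcal P$ of its Sylow $p$-subgroups. We may assume $p\mid|S|$, so $P\ne 1$; since $S$ is non-abelian simple, $P$ is neither trivial nor normal, hence $n:=n_p(S)>1$, and as $n\equiv 1\pmod p$ we get $n\ge p+1$, so $p<n<p^2$. The kernel of the conjugation action of $S$ on $\mathcal P$ is a normal subgroup lying inside $N_S(Q)$ for every $Q\in\mathcal P$; being normal it equals $1$ or $S$, and it cannot be $S$ (that would give $n=1$), so $S$ embeds in $\Sym(\mathcal P)\cong S_n$. Because $n<p^2$, a Sylow $p$-subgroup of $S_n$ is elementary abelian --- it is a direct product of $\lfloor n/p\rfloor<p$ copies of $C_p$ --- so $P$ is elementary abelian; write $|P|=p^m$ and suppose, for contradiction, that $m\ge 2$.

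Next I would determine how $P$ acts on $\mathcal P$. If $x\in P$ normalizes $Q\in\mathcal P$, then $\langle x\rangle Q$ is a $p$-group, so $x\in Q$; hence the only $P$-fixed point of $\mathcal P$ is $P$ itself, and for arbitrary $Q\in\mathcal P$ the $P$-stabilizer of $Q$ is $N_P(Q)=P\cap Q$. Therefore every nontrivial $P$-orbit on $\mathcal P$ has size $[P:P\cap Q]$, a power of $p$ lying in $[p,n-1]\subseteq[p,p^2)$, so this size is exactly $p$. Consequently $[P:P\cap Q]=p$ for every $Q\in\mathcal P\setminus\{P\}$; as $m\ge 2$, this means $P\cap Q\ne 1$ for all $Q\in\mathcal P$.

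Finally I would invoke the abelianity of $P$ to get a contradiction. Since $1\ne P\ne S$ and $S$ is simple, $O_p(S)=1$; as $P$ is abelian, Brodkey's theorem supplies two Sylow $p$-subgroups of $S$ whose intersection equals $O_p(S)=1$, contradicting the conclusion of the previous paragraph. Hence $m=1$, i.e. $P\cong C_p$.

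The orbit bookkeeping and the description of a Sylow $p$-subgroup of $S_n$ are routine; the only step carrying real weight is the last one, and Brodkey's theorem is the efficient tool for it. If one prefers to avoid that citation, one can instead note that $\bigcap_{Q\in\mathcal P}(P\cap Q)=\bigcap_{Q\in\mathcal P}Q=O_p(S)=1$, so the at most $(n-1)/p\le p-1$ distinct hyperplanes of $P$ that occur as $P\cap Q$ must intersect trivially, which already forces $m\le p-1$; pushing this all the way down to $m=1$ is precisely where one needs either extra argument or Brodkey.
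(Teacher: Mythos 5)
Your proposal is correct and follows essentially the same route as the paper: embed $S$ in $S_n$ via a transitive action of degree $n=n_p(S)<p^2$ (the paper uses cosets of $N_S(P)$, you use conjugation on the Sylow $p$-subgroups, which is the same representation), deduce that $P$ is elementary abelian because a Sylow $p$-subgroup of $S_n$ is a product of fewer than $p$ disjoint $p$-cycles, show $[P:P\cap Q]=p$ for every other Sylow $p$-subgroup $Q$ by counting the $P$-orbit of $Q$, and conclude with Brodkey's theorem using $O_p(S)=1$. The only differences are cosmetic, and your closing remark correctly identifies Brodkey as the essential final ingredient.
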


\begin{proof}
Write $n:=n_p(S)$. As $n\equiv1 (\bmod~p)$ we can say $n=1+rp$ with $r<p$. Note that $n=|S:N_S(P)|$, we consider the permutation
representation of $S$ on cosets of $N_S(P)$.
Since $S$ is simple, $S$ is isomorphic to a subgroup of $S_n$. From $n<p^2$ we deduce that $S_n$ has no element of order $p^2$, and hence
the exponent of Sylow $p$-subgroup of $S_n$ is $p$.
Observe that the $p$-part of $|S_n|$ is
\begin{align*}
|S_n|_p=p^{[\frac{n}{p}]+[\frac{n}{p^2}]+[\frac{n}{p^3}]+\cdots}=p^{[\frac{1+rp}{p}]}=p^r,    \end{align*}
so we can get that the Sylow $p$-subgroup of $S_n$ is an elementary abelian $p$-group, and then $P\cong C^a_p$ for some integer $a$. Let
$P_1$ be a Sylow $p$-subgroup of $S$ distinct from $P$, then $N_{P}(P_1)=P\cap P_1$ because otherwise $N_P(P_1)\not\leqslant P_1$, $P_1$
would be a proper subgroup of  $N_P(P_1)P_1$, which would be a $p$-group since $P_1\unlhd N_P(P_1)$, contradicting $P_1$ is a Sylow
$p$-subgroup of $S$.
So the number of conjugates of $P_1$ under $P$ is $|P:P_1\cap P|<n<p^2$, thus $|P:P_1\cap P|=p$. By using \cite [Brodkey Theorem
5.28]{Isaacs} we have $P_1\cap P=1$ because $S$ is simple. Thus $P\cong C_p$.
\end{proof}

\begin{lemma}\label{cyclicmaximal}
Let $S$ be a simple group, and let $M\cong C_n.N$ be an extension of $C_n$ by $N$, which is a maximal subgroup of $S$. If the Sylow $p$-subgroup $P$ of $S$ is contained in $C_n$, then $n_p(S)=|S:M|$.
\end{lemma}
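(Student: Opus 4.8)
The plan is to identify $M$ with the normalizer $N_S(P)$ of the Sylow $p$-subgroup $P$ in $S$; once that is done, the equality $n_p(S)=|S:M|$ is just the standard count of the Sylow $p$-subgroups as the index of a Sylow normalizer.

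First I would note that a subgroup of a cyclic group is cyclic and that a cyclic group has a unique Sylow $p$-subgroup. Since $P\leqslant C_n$ and $P$ is a $p$-group of the largest possible order in $S$, it is in particular of the largest $p$-power order inside $C_n$ (as $|P|=|S|_p\geqslant |C_n|_p\geqslant |P|$), so $P$ is the unique, hence characteristic, Sylow $p$-subgroup of $C_n$. The notation $M\cong C_n.N$ means precisely that $C_n\unlhd M$, and a characteristic subgroup of a normal subgroup is normal; therefore $P\unlhd M$, which gives $M\leqslant N_S(P)$.

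Next I would invoke maximality. Because $S$ is non-abelian simple and $P\neq 1$, the subgroup $P$ cannot be normal in $S$, so $N_S(P)$ is a proper subgroup of $S$ containing the maximal subgroup $M$; hence $N_S(P)=M$. Since $P$ is a Sylow $p$-subgroup of $S$, the number of Sylow $p$-subgroups of $S$ equals $|S:N_S(P)|=|S:M|$, which is the claim.

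There is no serious obstacle in this argument; the only point deserving a comment is that the hypotheses tacitly require $S$ to be non-abelian (a group of prime order cannot contain a proper subgroup $C_n$ with $P\leqslant C_n$ when $p\mid |S|$), and this is exactly what guarantees $N_S(P)\neq S$. Everything else is transitivity of ``characteristic in normal'' together with the Sylow normalizer count.
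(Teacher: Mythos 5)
Your argument is correct and follows essentially the same route as the paper: $P$ is characteristic in $C_n$ (the paper uses that every subgroup of a cyclic group is characteristic, you use uniqueness of the Sylow $p$-subgroup of $C_n$ — the same fact), hence $P\unlhd M$, and then maximality of $M$ together with $N_S(P)<S$ forces $N_S(P)=M$, giving $n_p(S)=|S:N_S(P)|=|S:M|$. The only difference is your explicit remark that simplicity of $S$ guarantees $N_S(P)\neq S$, which the paper leaves implicit.
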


\begin{proof}
Since $P\leqslant C_n$, $P$ is a characteristic subgroup of $C_n$. Note that $C_n\unlhd M$, so $P\unlhd M$. Implying that $N_M(P)=M$. On the other hand, $P\leqslant N_M(P)\leqslant N_S(P)<S$. By the maximality of $M$, we have $N_S(P)=M$. Thus $n_p(S)=|S:M|$.
\end{proof}

\section{The number of Sylow subgroups of simple groups}
In this section, we will discuss the number of Sylow $p$-subgroups $P$ of finite simple groups and use the classification theorem for finite
simple groups to prove Theorem \ref{sylowsimple}. As usual, $\pi(S)$ denotes the set of prime divisors of $|S|$, by
$\Phi_n(q)$ we denote the $n$th cyclotomic polynomial, by function $\phi$ denotes Euler's totient function.

\begin{thm} \label{sylowsimple}
Let $S$ be a finite simple group and $n_p(S)=1+rp$ for some integer $r$. Then $n_p(S)<p^2$ if and only if one of the following holds:
\begin{enumerate}
\item $r=1$, $S\cong \PSL_2(p)$.
\item $r=11$, $S\cong \PSL_3(3)$ and $p=13$.
\item $r=\frac{p-3}{2}$, $S\cong \PSL_2(p-1)$, where $p>3$ is a Fermat prime.
\item $r=\frac{p+3}{2}$, $S\cong \PSL_2(p+1)$ with $p>3$ is a Mersenne prime.
\end{enumerate}
\end{thm}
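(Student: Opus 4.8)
We assume throughout that $S$ is nonabelian (the abelian case $S\cong C_\ell$ being trivial) and that $p$ divides $|S|$, so $n_p(S)>1$. The \emph{if} direction is a direct computation: one checks $n_{13}(\PSL_3(3))=144$, $n_p(\PSL_2(p))=p+1$, $n_p(\PSL_2(p-1))=(p-1)(p-2)/2$ and $n_p(\PSL_2(p+1))=(p+1)(p+2)/2$, and that under the stated hypotheses on $p$ each of these is $<p^2$ and congruent to $1$ modulo $p$ with the asserted value of $r$. For the \emph{only if} direction, Lemma~\ref{sylowstructure} gives $P\cong C_p$, hence $n_p(S)=|S:N_S(P)|=1+rp$ with $1\le r\le p-1$, and $n_p(S)<p^2$ is equivalent to $|N_S(P)|>|S|/p^2$. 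Choosing a maximal subgroup $M$ of $S$ with $N_S(P)\le M$ we get $|S:M|<p^2$, $p\,\|\,|M|$, $N_S(P)=N_M(P)$, and $n_p(S)=|S:M|\cdot n_p(M)$; this lets us pass to small maximal subgroups when convenient. We then run through the classification of finite simple groups.

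For $S=A_n$ with $n\ge5$, the condition $P\cong C_p$ forces $p\le n\le 2p-1$, and then $N_{A_n}(P)=\big((C_p\rtimes C_{p-1})\times\Sym(n-p)\big)\cap A_n$ has index $\binom{n}{p}(p-2)!$ in $A_n$; a short estimate shows this is $<p^2$ only for $(n,p)=(5,5)$, and $A_5\cong\PSL_2(5)$ lies in case~(1) (and, for $p=5$, also case~(3)). For the sporadic groups (including the Tits group) one performs a finite check with the ATLAS: for each $S$ and each $p$ with $p\,\|\,|S|$, the index $|S:N_S(P)|$ is at least $p^2$, so no examples occur. If $S$ is of Lie type in defining characteristic $p$, a Sylow $p$-subgroup is the unipotent radical of a Borel subgroup, of order $q$ raised to the number of positive roots; $P\cong C_p$ forces that number to be $1$, hence $S\cong\PSL_2(p)$, $N_S(P)$ is a Borel subgroup of index $p+1$, and we are in case~(1).

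The substantial case is $S$ of Lie type over $\mathbb{F}_q$ with defining prime $\ell\ne p$; write $e=\mathrm{ord}_p(q)$. As $p\ne\ell$, $P=\langle s\rangle$ with $s$ semisimple, so $s$ lies in a maximal torus and $C_S(P)=C_S(s)$ is (up to bounded-index issues with centers and isogenies) a proper connected reductive subgroup of maximal rank; from $N_S(P)\le N_S(C_S(P))$ and $N_S(P)/C_S(P)\hookrightarrow C_{p-1}$ we get $|N_S(P)|\le(p-1)\,|C_S(s)|$. Combining this with $|N_S(P)|>|S|/p^2$, with $p\le\Phi_e(q)$, and — crucially — with the fact that $p\,\|\,|S|$ forces $s$ to be regular enough that $C_S(s)$ is comparatively small, one finds that the (twisted Lie) rank of $S$ must be small and $q$ bounded; in practice this leaves $\PSL_2(q)$ for arbitrary $q$, together with $\PSL_3(q),\PSU_3(q),\PSp_4(q),G_2(q)$ and a short further list with $q$ small, while the rank-one families $\Sz(q),\Ree(q)$ and all groups of rank at least three are eliminated outright by the same estimate. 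For $S\cong\PSL_2(q)$ with $p\ne\ell$, $P$ lies in a cyclic maximal torus $T$ of order $(q-1)/d$ or $(q+1)/d$, $d=\gcd(2,q-1)$; since $P$ is characteristic in $T$ and $C_S(P)=T$, we have $N_S(P)=N_S(T)$, a dihedral group, whence $n_p(S)=q(q+1)/2$ if $p\mid q-1$ and $n_p(S)=q(q-1)/2$ if $p\mid q+1$. Comparing with $p^2$ and using $p\mid q\mp1$, the only divisor of $q\mp1$ large enough is $q\mp1$ itself, giving $q-1=p$ (so $q=2^f$ and $p$ a Mersenne prime: case~(4)) or $q+1=p$ (so $q=2^f$ and $p$ a Fermat prime: case~(3)). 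For $\PSL_3(q),\PSU_3(q),\PSp_4(q)$ and the remaining small groups one runs the analogous torus-normalizer (Singer-cycle) computations; these survive only for $\PSL_3(3)$ with $p=13$, where the Singer torus is $C_{13}$ with normalizer of order $39$, so $n_{13}(\PSL_3(3))=5616/39=144<169$: case~(2). Along the way one keeps track of the exceptional isomorphisms among small simple groups, e.g. $\PSL_2(4)\cong\PSL_2(5)\cong A_5$, $\PSL_3(2)\cong\PSL_2(7)$, $\PSL_4(2)\cong A_8$, and $\PSp_4(2)'\cong A_6\cong\PSL_2(9)$, so as not to double count and to land on exactly the four stated cases.

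The main obstacle is precisely this cross-characteristic Lie type analysis: one needs an inequality sharp enough to reduce to finitely many groups, which rests on good control of $|C_S(s)|$ for semisimple $s$ of prime order (equivalently, on the orders of maximal-rank subgroups), and then on the explicit torus-normalizer computations. The rank-two classical groups are the delicate point — as the surviving example $\PSL_3(3)$ shows, the size estimates there are genuinely tight, so the reduction must be done carefully rather than by a crude bound alone.
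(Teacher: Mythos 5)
Your overall strategy coincides with the paper's: use Lemma~\ref{sylowstructure} to force $P\cong C_p$, then run through the classification, locating $N_S(P)$ inside a torus normalizer and comparing $|S:N_S(P)|$ with $p^2\leqslant \Phi_e(q)^2$. The pieces you actually execute are correct: the alternating and sporadic cases, the defining-characteristic reduction to $\PSL_2(p)$ (which is in fact cleaner than the paper's family-by-family treatment), and the cross-characteristic $\PSL_2(q)$ computation leading to the Fermat and Mersenne cases, including the bookkeeping of exceptional isomorphisms.

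However, there is a genuine gap, and you name it yourself in your final paragraph: the claim that the inequality $|S|/|N_S(P)|<p^2$ together with ``$C_S(s)$ comparatively small'' eliminates all cross-characteristic groups of Lie type of rank at least two apart from a short explicit list is asserted, not proved. That elimination is the bulk of the paper's argument: it occupies the propositions on $\PSU_n(q)$, $\PSp_{2n}(q)$, $\OO_{2n+1}(q)$, $\OO^{\pm}_{2n}(q)$, $\Sz(q)$, $G_2(q)$, ${}^2G_2(q)$, ${}^3D_4(q)$, $F_4(q)$, ${}^2F_4(q)$, $E_6(q)$, ${}^2E_6(q)$, $E_7(q)$ and $E_8(q)$, and it is not a soft consequence of a single estimate. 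The paper needs, for each family, (i) explicit Sylow normalizer orders (Gross; Vasil'ev; Ahanjideh--Iranmanesh) or reductions to subgroups such as $\GL_e(q)\leqslant\GL_n(q)$, $\GU_e(q)\leqslant\GU_n(q)$, $\GO^{+}_{2(e+1)}(q)$, together with a careful split according to whether $p$ is a primitive prime divisor of $q^e-1$ or $q^{2e}-1$ (Lemma~\ref{primitive}); (ii) the quantitative bound $\Phi_n(q)<4q^{\phi(n)}$ of Glasby--L\"ubeck--Niemeyer--Praeger to control $p^2$; and (iii) minimal permutation degrees for the exceptional groups. Your sketch replaces all of this with ``one finds that the rank must be small and $q$ bounded,'' and your proposed surviving list ($\PSp_4(q)$, $G_2(q)$, etc.) is itself something that would have to be established by exactly these computations. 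As your own remark about the rank-two groups being ``genuinely tight'' concedes, a crude bound does not suffice here, so the heart of the only-if direction remains unproved in your write-up.
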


 We will analyse the different possibilities for $S$ by using the classification of finite simple groups. Note that $n_p(S)<p^2$ implies that a Sylow $p$-subgroup of $S$ is isomorphic to $C_p$ by Lemma \ref{sylowstructure}, thus the $p$-part of $|S|$ is $p$.
 We assume $p\in \pi(S)$ such that $n_p(S)<p^2$ and let $P$ be a Sylow $p$-subgroup of $S$. We will divide the proof of Theorem \ref{sylowsimple} into several propositions.

\begin{prop}
    If $S$ is a sporadic simple group or $S\cong {}^2F_4(2)'$, then there does not exist $p\in \pi(S)$ such that $n_p(S)<p^2$.
\end{prop}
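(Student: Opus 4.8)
The plan is to run through the sporadic simple groups (and $^2F_4(2)'$) one prime at a time, using the constraint from Lemma \ref{sylowstructure} that $n_p(S)<p^2$ forces a Sylow $p$-subgroup to be cyclic of order $p$, so in particular $p \| |S|$. Combined with $n_p(S)=|S:N_S(P)| \equiv 1 \pmod p$ and $n_p(S) < p^2$, this says that $|S:N_S(P)| = 1 + rp$ with $1 \le r < p$, so $|N_S(P)|$ must be ``large'' relative to $|S|$: precisely $|S|/|N_S(P)| < p^2$. For each sporadic group $S$ and each prime $p$ with $p^2 \nmid |S|$, I would read off $|N_S(P)|$ from the known subgroup structure — the ATLAS gives, for every relevant prime, the normalizer of a Sylow $p$-subgroup (it appears as a maximal subgroup or is contained in one whose order is recorded), and hence $n_p(S)$ — and check directly that $n_p(S) \ge p^2$.

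The key simplification is that one need not do all primes: for any prime $p$ dividing $|S|$ with $p^2 \nmid |S|$, write $|S|_p = p$; if $n_p(S) < p^2$ then $|N_S(P)| > |S|/p^2$. So I would first list, for each of the $26$ sporadic groups plus $^2F_4(2)'$, those primes dividing the order exactly once (the large primes), and then for each such $p$ bound $n_p(S)$ below. A useful crude bound: $n_p(S) \ge$ (the index of the largest maximal subgroup containing $N_S(P)$ of order divisible by $p$), and for the large primes this largest maximal subgroup is typically the normalizer of the cyclic Sylow itself or a group of shape $p{:}k$ or $\mathrm{PSL}_2(\cdot)$-type, whose index already exceeds $p^2$. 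Concretely, for the smallest sporadic groups ($M_{11}, M_{12}, M_{22}, M_{23}, M_{24}, J_1, J_2, \dots$) and $^2F_4(2)'$ I would just quote $n_p(S)$ from the ATLAS character table / maximal subgroup data; for the larger ones ($Co_1$, $Fi_{24}'$, $B$, $M$, etc.) I would only need the orders of the normalizers of cyclic Sylow $p$-subgroups, all tabulated.

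I would organize this as a finite case check, possibly presented as a table: columns $S$, a prime $p$ with $p \| |S|$, $|N_S(P)|$ (or an upper bound), and $n_p(S) = |S|/|N_S(P)|$, verifying the last entry is $\ge p^2$ in every row. To keep it finite I'd note that if $p \| |S|$ and $p > \sqrt{|S|}$ it is automatic that... no — rather, the relevant observation is that $n_p(S) \equiv 1 \pmod p$ and $n_p(S) \mid |S|$, so $n_p(S)$ is a product of primes each at most the largest prime in $\pi(S)$; one checks there is no divisor of $|S|$ lying in the interval $(1, p^2)$ that is $\equiv 1 \pmod p$ except $1$ itself, which forces $n_p(S)=1$, impossible for $p \mid |S|$ in a simple group. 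Actually $n_p(S)=1$ would make $P$ normal, contradicting simplicity (as $P \ne S$), so it suffices to exhibit, for each $(S,p)$, that every divisor $d$ of $|S|$ with $d \equiv 1 \pmod p$ satisfies $d \ge p^2$ — a purely arithmetic check on the factorization of $|S|$.

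The main obstacle is sheer volume: this is a long finite verification across all sporadic groups and all their ``large'' primes, and the larger groups (the Monster in particular) have many primes to check, so the bookkeeping — making sure no prime is overlooked and that each normalizer order is correctly transcribed from the ATLAS — is where errors can creep in. There is no conceptual difficulty beyond Lemma \ref{sylowstructure}; everything reduces to the classification data for sporadic groups. I would lean on the arithmetic reformulation (no nontrivial divisor of $|S|$ in $(1,p^2)$ congruent to $1$ mod $p$) wherever possible, since that avoids needing the precise normalizer and only uses the prime factorization of $|S|$, and fall back on explicit normalizer orders only for the handful of primes where a small divisor $\equiv 1 \pmod p$ happens to exist.
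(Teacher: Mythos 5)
Your proposal is correct, and it is organized rather differently from the paper's proof, which is much blunter: the paper simply observes that $N_S(P)$ lies in some maximal subgroup $M$, so $n_p(S)=|S:N_S(P)|\geqslant |S:M|$ is at least the minimal permutation degree of $S$, and then cites the ATLAS to conclude $n_p(S)\geqslant p^2$. Your route first uses Lemma \ref{sylowstructure} to cut the check down to primes $p$ with $p\,\|\,|S|$, and then exploits the two arithmetic constraints $n_p(S)\equiv 1 \pmod p$ and $n_p(S)\mid |S|$ (with $n_p(S)\neq 1$ by simplicity) to dispose of most pairs $(S,p)$ without any structural input beyond the factorization of $|S|$, falling back on explicit Sylow normalizer orders only when $|S|$ happens to have a small divisor congruent to $1$ bmod $p$. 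This buys something real: the paper's single bound ``minimal permutation degree $\geqslant p^2$'' is in fact not sufficient on its own for several pairs --- e.g.\ $M_{11}$ with $p=11$ has minimal degree $11<121$, and $12$ and $45$ are divisors of $|M_{11}|$ congruent to $1$ bmod $11$, so there one genuinely needs the normalizer $11{:}5$ giving $n_{11}(M_{11})=144$; similar remarks apply to $(M_{12},11)$, $(M_{24},23)$, $(J_1,19)$, etc. So the ATLAS verification the paper waves at must implicitly be doing what you spell out, and your per-prime bookkeeping (congruence-plus-divisor filter, then normalizer lookup) is the honest way to carry out that finite check; the cost is a longer, more explicitly tabulated verification, whereas the paper's approach needs only one number per group when it does apply.
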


\begin{proof}
Since $S$ is a simple group, $N_S(P)<S$, it follows that there is a maximal subgroup $M$ of $S$ such that $N_S(P)\leqslant M<S$. Thus $|S:N_S(P)|\geqslant |S:M|$, it is not less than the degree of minimal permutation representation of simple group $S$. With the help of \cite{CCNPW}, $n_p(S)\geqslant p^2$, a contradiction.
\end{proof}

\begin{prop}\label{Alternatingsimple}
    Let $S\cong A_n$ with $n\geqslant 5$. Then $n_p(S)<p^2$ if and only if  $S\cong A_5$ and $p=5$.
\end{prop}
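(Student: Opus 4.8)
The plan is to reduce the statement to an explicit computation of $n_p(A_n)$ via the normaliser of a Sylow $p$-subgroup, followed by an elementary estimate. First, by Lemma~\ref{sylowstructure}, the hypothesis $n_p(S)<p^2$ forces a Sylow $p$-subgroup $P$ of $S=A_n$ to be cyclic of order $p$, so $|A_n|_p=p$. If $p=2$ this is impossible, since for $n\geqslant 5$ the group $A_n$ contains $\langle(1\,2)(3\,4),(1\,3)(2\,4)\rangle\cong C_2\times C_2$, which lies inside a Sylow $2$-subgroup; hence $p$ is odd, and $|S_n|_p=p$ happens exactly when $p\leqslant n\leqslant 2p-1$. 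So I would from now on assume $p$ odd and $n\in\{p,p+1,\dots,2p-1\}$, and take $P=\langle(1\,2\,\cdots\,p)\rangle$.

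Second, I would pin down the normaliser concretely. Identifying $\{1,\dots,p\}$ with $\mathbb Z/p$, one has $N_{S_n}(P)=H\times S_{n-p}$, where $H=\{x\mapsto ax+b : a\in(\mathbb Z/p)^{\times},\ b\in\mathbb Z/p\}\cong C_p\rtimes C_{p-1}$ acts on the first $p$ points; thus $|N_{S_n}(P)|=p(p-1)(n-p)!$. A generator of the cyclic complement $C_{p-1}$ acts as $x\mapsto gx$ with $g$ a primitive root modulo $p$: it fixes $0$ and is a single $(p-1)$-cycle on the nonzero residues, hence an odd permutation since $p-1$ is even. Therefore $N_{S_n}(P)\not\leqslant A_n$, so $N_{A_n}(P)=N_{S_n}(P)\cap A_n$ has index $2$ in $N_{S_n}(P)$, and
\[
n_p(A_n)=\frac{|A_n|}{|N_{A_n}(P)|}=\frac{n!/2}{p(p-1)(n-p)!/2}=\frac{n!}{p(p-1)(n-p)!}=(p-2)!\binom{n}{p}.
\]

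Third, I would finish with a size comparison. Write $n=p+k$ with $0\leqslant k\leqslant p-1$. If $p\geqslant 7$, then already $(p-2)!\geqslant p^2$ (an easy induction from $5!\geqslant 7^2$), so $n_p(A_n)\geqslant(p-2)!\geqslant p^2$. If $p=5$, then $n_5(A_5)=3!\binom{5}{5}=6<25$, while for $6\leqslant n\leqslant 9$ one has $n_5(A_n)=6\binom{n}{5}\geqslant 36>25$. If $p=3$, the only value with $n\geqslant 5$ is $n=5$, and $n_3(A_5)=1!\binom{5}{3}=10>9$. Hence $n_p(A_n)<p^2$ holds only for $(n,p)=(5,5)$, and conversely $n_5(A_5)=6<25$, which is exactly the claim.

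As for the main difficulty: there is no deep obstacle here. The only point needing care is the index-$2$ identity $N_{A_n}(P)=N_{S_n}(P)\cap A_n$, i.e.\ verifying that the $S_n$-normaliser of $P$ is not contained in $A_n$ (handled above via the parity of the $(p-1)$-cycle $x\mapsto gx$); everything else is routine bookkeeping together with the factorial-versus-square inequality.
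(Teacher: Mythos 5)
Your proof is correct and follows essentially the same route as the paper: both rest on the formula $n_p(A_n)=\frac{n!}{p(p-1)(n-p)!}$ (which the paper cites from Weisner and you rederive from the normaliser $N_{S_n}(P)\cong(C_p\rtimes C_{p-1})\times S_{n-p}$ together with the parity argument), followed by the same lower bound $(p-2)!$ and the same case checks for $p\geqslant 7$, $p=5$ and $p=3$. The only difference is that your version is self-contained where the paper cites a reference.
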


\begin{proof}
Suppose now that $S\cong A_n$ with $n\geqslant 5$. Obviously, $p\leqslant n<2p$ since $P\cong C_p$, thus $p\neq 2$. Applying \cite{LW}
we have
\begin{align*}
n_p(S)&=\frac{n!}{(n-p)!p(p-1)}=\frac{n(n-1)\cdots (n-p+2)(n-p+1)}{p(p-1)}\\&
\geqslant (n-2)(n-3)\cdots (n-(p-2))(n-(p-1))\geqslant (p-2)!.
\end{align*}
Assume $p\geqslant 7$, then $(p-2)!>p^2$, this implies that $n_p(S)>p^2$, a contradiction. Thus $p=5$. If $n=5$ then
$n_5(A_5)=6<25$ and if $n\geqslant 6$ then
$$n_p(S)=\frac{n!}{20(n-5)!}=\frac{n(n-1)(n-2)(n-3)(n-4)}{20}\geqslant \frac{6!}{20}.$$
In the second case, $n_p(S)>25=p^2$.
It remains to deal with the case $p=3$, that is $n=5$ because $3\leqslant n<6$, it is easy to check that $n_3(A_5)=10>3^2$, a
contradiction.
\end{proof}

\begin{prop}\label{linearsimple}
Let $S\cong \PSL_n(q)$ with $q=r^f$. Then $n_p(S)<p^2$ if and only if the following hold:
\begin{enumerate}
    \item $S\cong \PSL_2(p)$ and $n_p(S)=1+p$.
    \item $S\cong \PSL_3(3)$, $p=13$ and $n_{13}(S)=144$.
    \item $S\cong \PSL_2(p-1)$ and $n_p(S)=1+\frac{(p-3)p}{2}$ where $p>3$ is a Fermat prime.
    \item $S\cong \PSL_2(p+1)$ and $n_p(S)=1+\frac{(p+3)p}{2}$ where $p>3$ is a Mersenne prime.
\end{enumerate}
\end{prop}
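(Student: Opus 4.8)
The plan is to work through the possible linear simple groups $S\cong\PSL_n(q)$ with $q=r^f$ admitting a prime $p$ with $n_p(S)<p^2$, using throughout the fact (from Lemma \ref{sylowstructure}) that a Sylow $p$-subgroup of $S$ is cyclic of order $p$, so $p$ exactly divides $|S|$. First I would dispose of the case $p=r$: then $p$ divides $|\PSL_n(q)|$ to the first power only when $n=2$, forcing $S\cong\PSL_2(p)$ with $|S|_p=p$, and there $N_S(P)$ is a Borel subgroup of order $p(p-1)/\gcd(2,p-1)$, giving $n_p(S)=1+p$; this yields case~(1). So from now on $p\neq r$, and $p$ divides $|\PSL_n(q)|=\tfrac1{\gcd(n,q-1)}q^{n(n-1)/2}\prod_{i=2}^n(q^i-1)$ exactly once. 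I would introduce $e=\min\{k\ge1: p\mid q^k-1\}$, the order of $q$ modulo $p$; then $p\mid q^i-1$ iff $e\mid i$, and since $|S|_p=p$ we must have $e\le n$ and $p^2\nmid q^e-1$, with $e$ occurring only once among $\{2,\dots,n\}$, i.e.\ $2e>n$.

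\medskip
The key structural input is the location of $N_S(P)$. When $e<n$ (so $n/2<e\le n-1$, forcing $n\ge3$), the normalizer of $P$ lies inside a maximal subgroup $M$ that stabilizes a direct sum decomposition or is a field-extension (Singer-type) subgroup, and in all such cases $n_p(S)=|S:N_S(P)|$ is bounded below by the index of $M$, which is roughly of size $q^{e(n-e)}$ or larger; comparing with $p<q^e$ (since $p\mid q^e-1$ forces $p\le q^e-1$) one checks $|S:M|\ge p^2$ except in a tiny list of small cases. The arithmetic heart here is: $n_p(S)<p^2<q^{2e}$, but the orbit-counting forces $n_p(S)$ to be divisible by a large power of $q$ or by several of the factors $q^i-1$ with $i<e$, which already exceeds $q^{2e}$ unless $(n,q)$ is very small; running this down should leave only $\PSL_3(3)$ with $p=13$ (here $e=3$, $q^3-1=26=2\cdot13$, and $n_{13}(\PSL_3(3))=|\PSL_3(3)|/(13\cdot ?)=144=12^2$), which is case~(2). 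For this step I would lean on Lemma \ref{cyclicmaximal}: when $P$ sits inside the cyclic part $C_m$ of a maximal subgroup $M\cong C_m.N$ (the Singer cycle situation), $n_p(S)=|S:M|$ exactly, which pins down the count.

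\medskip
The remaining and most delicate case is $e=n$, i.e.\ $p$ is a primitive prime divisor of $q^n-1$. Then $P$ lies in a Singer cycle, $N_S(P)\le$ the normalizer of a Singer torus, of order $\tfrac{(q^n-1)}{(q-1)\gcd(n,q-1)}\cdot n$, so $n_p(S)=|S:M|$ by Lemma \ref{cyclicmaximal} again, and this is a fixed rational expression in $q$ and $n$. Demanding it be $<p^2\le(q^n-1)^2/(\text{stuff})^2$ is only possible for small $n$; $n\ge4$ is killed immediately by size, $n=3$ gives $n_3(S)$ of order $q^3(q^3-1)(q+1)/(\cdots)$ which exceeds $p^2$ once $q\ge3$ (and $q=2$ is $\PSL_3(2)\cong\PSL_2(7)$, already covered), leaving $n=2$. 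For $n=2$ the group is $\PSL_2(q)$ with $q=r^f$, $p$ a primitive prime divisor of $q^2-1$ dividing $q+1$ (the $e=n=2$, $p\mid q^2-1$ but $p\nmid q-1$ case) or $p\mid q-1$. Here $n_p(\PSL_2(q))$ is one of $q(q\pm1)/2$ or $q(q^2-1)/(2(p-1))$-type expressions; I impose $n_p<p^2$ together with $p\mid q\pm1$, and since $p\le q+1$ this squeezes $q$ to be essentially $p\pm1$. Working out $q=p-1$ (which forces $q$ a power of $2$, hence $p$ Fermat) gives $n_p(\PSL_2(p-1))=q(q+1)/2=(p-1)p/2$... wait, I need the precise normalizer: $N_S(P)$ is dihedral of order $p\mp1$ or a Borel, and the clean outcomes are $n_p(\PSL_2(p-1))=1+\frac{(p-3)p}{2}$ for $p>3$ Fermat (case~(3)) and $n_p(\PSL_2(p+1))=1+\frac{(p+3)p}{2}$ for $p>3$ Mersenne (case~(4)); Lemma \ref{number} is exactly what rules out the spurious small solution in the Fermat subcase. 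The main obstacle throughout is the careful bookkeeping of which maximal subgroup contains $N_S(P)$ in $\PSL_n(q)$ for each congruence class of $e$ relative to $n$, and extracting the sharp inequality $n_p(S)\ge p^2$ in all but the four listed configurations — this is where Zsigmondy's theorem (Lemma \ref{Zsig}) and Lemma \ref{primitive} do the real work, guaranteeing the primitive prime divisor exists and hence that the generic Singer-normalizer index is genuinely large.
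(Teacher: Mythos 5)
Your overall strategy (split on the order $e$ of $q$ modulo $p$, locate $N_S(P)$ inside a known subgroup, and compare the resulting index with $p^2$) is the same as the paper's, but as written the argument has concrete gaps exactly where the generic bounds stop working. The paper's workhorse in the range $2\leqslant e$ is not "the index of the maximal overgroup of $N_S(P)$ is roughly $q^{e(n-e)}$": it is the chain $n_p(\PSL_n(q))=n_p(\GL_n(q))\geqslant n_p(\GL_e(q))$ (Lemma \ref{Litianze} plus monotonicity of Sylow numbers under subgroups), Gross's exact formula $|N_{\GL_e(q)}(Q)|=e(q^e-1)$, and the cyclotomic bound $p\leqslant\Phi_e(q)<4q^{\phi(e)}$. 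Your comparison of "roughly $q^{e(n-e)}$" against "$p<q^e$" is too weak to close the cases $e=n-1$ and $e=n$ (e.g.\ $n=3$, $e=2$ gives $q^{e(n-e)}=q^2$ versus $q^{2e}=q^4$), and you would need the sharper bound $p\leqslant\Phi_e(q)$ together with the full index of the relevant Levi/Singer normalizer to make it run. More seriously, the boundary cases where $p=q\pm1$ exactly are invisible to any such inequality, because then $n_p(\GL_2(q))=q(q-1)/2=(p-1)(p-2)/2<p^2$ genuinely holds; one must return to $S$ itself. For $\PSL_2(q)$ this produces cases (3) and (4), but for $\PSL_3(2^f)$ with $p=2^f+1$ Fermat the paper has to compute $N_S(P)$ directly (maximal torus of order $(q^2-1)/(3,q-1)$, no elements of order $2p$, Weyl-group argument) to get $n_p(S)=q^3(q^3-1)/2>p^2$, and for $\PSL_3(2^f)$ with $p=2^f-1$ it invokes the maximal subgroup $(q-1)^2{:}S_3=p^2{:}S_3$ to contradict $P\cong C_p$. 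Your proposal never isolates these two $\PSL_3$ subcases, and they do not follow from anything you state.

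There is also an internal inconsistency in your case bookkeeping. For $\PSL_3(3)$ with $p=13$ one has $13\mid 3^3-1$ and $e=3=n$, so this exception lives in your "$e=n$" branch, not the "$e<n$" branch where you place it; and in your $e=n$, $n=3$ discussion you assert the Singer-normalizer index "exceeds $p^2$ once $q\geqslant3$", which is false precisely at $q=3$ (there $n_{13}=144<169$). As written, that branch would erroneously eliminate the genuine exception (2). Finally, the case $p=2$ (where $r$ is odd) needs a sentence: your framework covers it implicitly via $|S|_2=2$ being impossible, but you should say so, since otherwise "$p\neq r$" silently includes a case where $e=1$ and your divisibility count $2e>n$ does not apply. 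None of these are unfixable, and your eigenspace-stabilizer idea could in fact replace the paper's torus computation for $\PSL_3(q)$ with $p\mid q+1$ (the stabilizer of the decomposition $V_1\oplus V_2$ has index $q^2(q^2+q+1)>(q+1)^2$), but the proposal as it stands is a plan with the decisive verifications missing.
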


\begin{proof}
Suppose that $S\cong \PSL_n(q)$ with $q=r^f$, we have
$$|S|=\frac{q^{\frac{n(n-1)}{2}}(q-1)(q^2-1)(q^3-1)\cdots (q^n-1)}{(q-1)(n,q-1)}.$$
Assume $(p,2r)\neq 1$. If $p=2$ then $r=2$ because if this was not the case, then $S\cong \PSL_2(q)$ with $q$ odd and $P$ would not be isomorphic to $C_2$. Therefore, $S\cong \PSL_n(2^f)$. Observe that $P\cong C_2$, so $S\cong \PSL_2(2)$, a contradiction.
If $p=r$ is odd then $S\cong \PSL_2(p)$, it is not difficult to get that $|N_S(P)|=p(p-1)/2$, thus $n_p(S)=1+p$.

Next, we assume that $(p,2r)=1$. 
Let $p$ be a primitive prime divisor of $q^e-1$, that is,
$$e=min\{k\geqslant 1|q^k\equiv 1(\bmod ~p)\}.$$ 
If $e\geqslant 2$, then $p$ does not divide $q-1$ and $n/2<e\leqslant n$ because $P\cong C_p$. By Lemma \ref{Litianze}, $n_p(\PSL_n(q))=n_p(\GL_n(q))$ since $|\GL_n(q):\SL_n(q)|$ is not divisible by $p$.
Note that $\GL_e(q)\leqslant \GL_n(q)$, thus $n_p(\GL_e(q))\leqslant n_p(\GL_n(q))<p^2$. Now, we are only concerned with the number of Sylow $p$- subgroups of $\GL_e(q)$. Let $G:=\GL_e(q)$ and $Q\in Syl_p(G)$. In
the light of \cite[Lemma 2.5]{Gross},  $|N_G(Q)|=e(q^e-1)$, implying that
\begin{align}\label{line}
n_p(G)
=\frac{q^{\frac{e(e-1)}{2}}(q-1)(q^2-1)\cdots(q^{e-1}-1)}{e}.
\end{align}
Since $p$ is a primitive prime divisor of $q^e-1$, $p|\Phi_e(q)$, it follows that
\begin{align*}
p^2\leqslant (\Phi_e(q))^2<(4q^{\phi(e)})^2
\end{align*}
by \cite[Lemma 2.1(f)] {glasby}.

If $e\geqslant 6$ then $q^{e-1}-1\geqslant e$ and $q^{\frac{e(e-1)}{2}}>16q^{2(e-1)}$. Therefore
\begin{align*}
n_p(G)\geqslant q^{\frac{e(e-1)}{2}}>16q^{2(e-1)}\geqslant (4q^{\phi(e)})^2>p^2,
\end{align*}
which is a contradiction. Thus $e\in \{5,4,3,2\}$.

If $e=5$, then by \cite[Lemma 2.1(f)] {glasby} and equality (\ref{line}), we get
\begin{align*}
n_p(G)&=\frac{q^{10}(q-1)(q^2-1)(q^3-1)(q^4-1)}{5}\geqslant q^{10}(q-1)^4(q+1)^2(q^2+1) \\&
\geqslant q^{14}>16q^8=(4q^{\Phi(5)})^2> (\Phi_5(q))^2\geqslant p^2,
\end{align*}
a contradiction. 

Assume $e=4$. If $q\geqslant 3$, then by \cite[Lemma 2.1(f)] {glasby} and equality (\ref{line}),   $$n_p(G)=\frac{q^{6}(q-1)(q^2-1)(q^3-1)}{4}
\geqslant q^8\geqslant (4q^{\phi(4)})^2>p^2$$
and if $q=2$ then $p=5$, $n_p(G)=336>5^2$. These cases are impossible.

We next assume that $e=3$, the equality (\ref{line}) implies that
$$n_p(G)=\frac{q^3(q-1)(q^2-1)}{3}\geqslant \frac{q^4(q-1)^2}{3}.$$
Note that $p^2\leqslant (\Phi_3(q))^2=(q^2+q+1)^2\leqslant 9q^4$. Obviously, if $q\geqslant 7$, then $n_p(G)\geqslant
q^4(q-1)^2/3\geqslant 9q^4\geqslant p^2$.
Since $p$ is a primitive prime divisor of $q^3-1$, we can get that if $q=5$, $4$, $3$ or $2$ then $p=31$, $7$, $13$ or $7$ and $n_p(G)=4000$, $960$,
$144$ or $8$, respectively. Therefore $(q,p)=(3,13)$ or $(q,p)=(2,7)$. Recall that $n/2<3\leqslant n$, thus $n=3$, $4$ or $5$. Therefore we only need to check the number of Sylow $13$-subgroups of  $\PSL_3(3)$, $\PSL_4(3)$ and $\PSL_5(3)$ and the number of Sylow $7$-subgroups of $\PSL_3(2)$, $\PSL_4(2)$ or $\PSL_5(2)$. By using \cite{GAP} $n_p(S)>p^2$ except $n_7(\PSL_3(2))=8$ and $n_{13}(\PSL_3(3))=144=1+11\cdot 13$.

Suppose $e=2$, then $n_p(G)=\frac{q(q-1)}{2}$ and $n=2$ or $3$. Since $p$ is a
primitive prime divisor of $q^2-1$, thus $p\mid q+1$. We write
$kp=q+1$ for some integer $k$. Assume $k\geqslant 2$. If $q\geqslant 5$ then $$n_p(G)=\frac{q(q-1)}{2}\geqslant\frac{(q+1)^2}{4}\geqslant p^2.$$ We are left to examine $q=2$, $3$ or $4$. In these  cases,   $(q,p)=(2,3)$ or $(4,5)$ because $p$ divides $q+1$ and $(p, 2r)=1$.  Therefore we only need to calculate $n_3(\PSL_3(2))$, $n_5(\PSL_2(4))$ and $n_5(\PSL_3(4)).$ 
Simple calculations by using GAP \cite{GAP} show that $S\cong \PSL_2(4)$ with $n_5(S)=6$ because $n_5(\PSL_3(4))=2016$, $n_3(\PSL_3(2))=28$, and both of them are greater than $p^2$. Now assume that $k=1$, we have $p=q+1$, thus $n_p(G)=\frac{(p-1)(p-2)}{2}<p^2$, so we cannot determine the relationship between $p^2$ and the number $n_p(S)$ of Sylow $p$-subgroups of $S$ by considering $n_p(G)$. Therefore, we need to consider the simple group $S$. Note that $p$ is an odd prime integer, so $q=2^f$ and
$p=2^f+1$ is Fermat. Moreover, $S\cong \PSL_2(2^f)$ or $\PSL_3(2^f)$. If $S\cong \PSL_2(2^f)$, then $S$ has a maximal subgroup $M$ of type $D_{2(q+1)}$. Thus there is a Sylow $p$-subgroup $P$ of $S$ such that $P\leqslant M$. Obviously, $P\in Syl_p(M)$ and $P\unlhd M$. By the maximality of $M$, $M=N_M(P)=N_S(P)$. Therefore, 
$$n_p(S)=\frac{|S|}{|M|}=\frac{q(q-1)}{2}=\frac{(p-1)(p-2)}{2}=1+rp,$$ where $r=\frac{p-3}{2}$. Let $S\cong \PSL_3(2^f)$. In the light of \cite [Lemma 1.2(1)]{Vasiliev}, the order of maximal torus of $S$ is $\frac{(q-1)^2}{(3,q-1)}$, $\frac{q^2-1}{(3,q-1)}$ or $\frac{q^2+q+1}{(3,q-1)}$, it implies that the Sylow $p$-subgroup $P$ of $S$ is contained in the maximal torus of order $\frac{q^2-1}{(3,q-1)}$. Thus, $\frac{q^2-1}{(3,q-1)}$ divides $|C_S(P)|$. By \cite [Table 4]{Vasiliev}, $S$ does not have element of order $2p$, so that the order $|C_S(P)|$ of the centralizer of $P$ in $S$ is $\frac{q^2-1}{(3,q-1)}$. 
Note that $N_S(P)/C_S(P)$ is isomorphic to a subgroup of Weyl group $S_3$. It is also isomorphic to a subgroup of $\Aut(P)$, which is a cyclic group of order $2^f$. Thus $N_P(S)/C_P(S)\cong C_2$ (Since $S$ is simple, it is not a $p$-nilpotent group, so that $|N_S(P)|\neq |C_S(P)|$). Thus $|N_S(P)|=\frac{2(q^2-1)}{(3,q-1)}$.
Therefore, $$n_p(S)=\frac{|S|}{|N_S(P)|}=\frac{q^3(q^3-1)}{2}>q^2+2q+1=p^2,$$ a contradiction.

 Now we consider $e=1$, i.e., $p\mid q-1$. Since $P\cong C_p$, $n\leqslant 3$. Let $S\cong \PSL_2(q)$. If $q\in \{5, 7, 9, 11\}$, then, by $(p,2r)=1$ and $p$ divides $q-1$, we only need to consider $n_3(\PSL_2(7))$ and $n_5(\PSL_2(11))$. By applying \cite{GAP}, we have $n_3(\PSL_2(7))$ and $n_5(\PSL_2(11))$ are greater than $p^2$. If $q\geqslant 13$ is odd, then $S$ has a maximal subgroup $M$ isomorphic to $D_{q-1}$. Clearly, $S$ has a Sylow $p$-subgroup $P$ that is contained in $C_{\frac{q-1}{2}}$. Moreover, $C_{\frac{q-1}{2}}\unlhd M$. By Lemma \ref{cyclicmaximal}, $n_p(S)=|S|/|M|=\frac{q(q+1)}{2}$. Recalled that $p|q-1$, we can say $q-1=kp$ for some integer $k$. If $k\geqslant 2$ then $n_p(S)\geqslant \frac{(q-1)^2}{k^2}=p^2$. Hence, $p=q-1$. Note that $p$ is prime and $q$ is odd, so $p=2$, contradicting the fact that $(p,2r)=1$. Suppose $q=2^f\geqslant 4$, then $M:=D_{2(q-1)}$ is a maximal subgroup of $S$. Similarly, $N_S(P)=M$ and $n_p(S)=\frac{q(q+1)}{2}$. Since $n_p(S)<p^2$ and $p|(q-1)$, $p=q-1=2^f-1$ is a Mersenne prime number. Therefore, 
 $$n_p(S)=\frac{(p+1)(p+2)}{2}=1+rp,$$ where $r=\frac{p+3}{2}$. Let $S\cong \PSL_3(q)$. Observe that $\PSL_2(q)\leqslant \PSL_3(q)$, thus we
only need to consider $\PSL_3(2^f)$ where $2^f=p+1\geqslant 4$ because otherwise $n_p(\PSL_3(q))\geqslant n_p(\PSL_2(q))\geqslant p^2$ by above discussion.
We claim that $\PSL_3(2^f)=\SL_3(2^f)$. If this was not the case, then $Z(\SL_3(2^f))$ would be isomorphic to $C_{(3,2^f-1)}=C_3$, thus
$p=3$ and $S\cong \PSL_3(4)$, by using GAP \cite{GAP} again, $n_3(\PSL_3(4))=280>9$, a contradiction. By \cite [Table 8.3]{Bray} $S$ has a maximal subgroup $M$ of type $(q-1)^2:S_3=p^2:S_3$. Contradicting the fact that $P\cong C_p$. 
\end{proof}

\begin{prop}\label{unitarysimple}
    Let $S\cong \PSU_n(q)$ where $q=r^f\geqslant 2$ and $n\geqslant 3$. Then there does not exist $p\in \pi(S)$ such that $n_p(S)<p^2$.
\end{prop}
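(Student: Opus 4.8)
The plan is to mimic the structure of the linear case (Proposition \ref{linearsimple}) but exploit the fact that unitary groups of dimension $n \geqslant 3$ are "large" relative to their small primitive prime divisors. First I would record the order formula
\[
|S|=\frac{q^{\frac{n(n-1)}{2}}\prod_{i=2}^{n}(q^i-(-1)^i)}{(n,q+1)},
\]
and dispose of the non-coprime cases: if $p=r$ then a Sylow $p$-subgroup of $\PSU_n(q)$ is non-cyclic for $n\geqslant 3$ (the root subgroups give a $p$-group of order at least $q^{n-1}$ which is not cyclic once $q>p$ or $n>2$), contradicting Lemma \ref{sylowstructure}; and if $p=2$ then $r$ is odd and again the Sylow $2$-subgroup of $\PSU_n(q)$ with $n\geqslant 3$ is not $C_2$. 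So we may assume $(p,2r)=1$.

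Next I would invoke Lemma \ref{Litianze} to replace $n_p(\PSU_n(q))$ by $n_p(\GU_n(q))$, since $(n,q+1)$ is prime to $p$, and then set $d:=\min\{k\geqslant 1 \mid q^k\equiv(-1)^{k}\pmod p\}$ type of order, i.e. let $e$ be the order of $q$ modulo $p$ as usual and use Lemma \ref{primitive} to see that $p$ is a primitive prime divisor of $q^e-1$ with $e$ even, $e=2m$, where $m\leqslant n$ controls which $\GU_m$ or $\GU$-torus contains $P$. Concretely $p\mid \Phi_{2m}(q)$ when $m$ is odd (so $p\mid q^m+1$) or the appropriate cyclotomic factor; in all cases $\GU_m(q)$ or $\GU_{m+1}(q)$ (depending on parity) embeds in $\GU_n(q)$ and contains a full Sylow $p$-subgroup, with $|N(P)|$ computed from \cite[Lemma 2.5]{Gross} (or its unitary analogue) as roughly $2m\cdot(q^m+1)$. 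This gives a clean lower bound
\[
n_p(\GU_n(q))\geqslant n_p(\GU_{m_0}(q))\geqslant \frac{q^{\binom{m_0}{2}}\prod(q^i-(-1)^i)}{c\cdot m_0}
\]
for the relevant small $m_0\in\{2,3\}$, which I would then compare against $p^2\leqslant \Phi_{2m}(q)^2<(4q^{\phi(2m)})^2$ using \cite[Lemma 2.1(f)]{glasby} exactly as in the linear case.

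The main obstacle — and the reason this Proposition has no exceptions unlike the linear one — is the genuinely small-dimensional, small-$q$ cases where the crude polynomial bounds fail: I expect these to be $\PSU_3(q)$ and $\PSU_4(q)$ for $q\in\{2,3,4,5\}$ together with the borderline $e=2$ situation (i.e. $p\mid q-1$, where $P$ sits in a maximal torus $(q-1)$ times something). For $e=2$ one argues as in the linear $\PSL_3$ analysis: the torus of order $(q^2-1)/(n,q+1)$ or $(q-1)^2/(n,q+1)$ contains $P$, its centralizer is that torus (no element of order $2p$, cf.\ \cite[Table 4]{Vasiliev}), $N_S(P)/C_S(P)$ embeds in the Weyl group and in $\Aut(C_p)$ forcing it to have order dividing $2$, and the resulting index $n_p(S)=|S|/|N_S(P)|$ is then checked to exceed $p^2$ directly. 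For the finitely many remaining pairs $(n,q,p)$ I would simply verify $n_p(S)>p^2$ with \cite{GAP}. The one thing to be careful about is the analogue of \cite[Lemma 2.5]{Gross} for $\GU$: the normalizer of a Sylow $p$-subbgroup lying in a Coxeter-type torus has order $e\cdot|\text{torus}|$ with $e$ the relevant reflection order, and I would cite or reprove this (it follows from the structure of $N_{\GU_m(q)}(\text{torus})$ as a cyclic group extended by $C_m$ or $C_{2m}$), so that the formula \eqref{line}'s unitary counterpart is on firm footing before pushing the inequalities.
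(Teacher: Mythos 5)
Your proposal follows essentially the same route as the paper: reduce to $\GU_n(q)$ via Lemma \ref{Litianze}, embed $\GU_e(q)\leqslant\GU_n(q)$ for the unitary order $e$ of $q$ modulo $p$, use the normalizer order $e(q^e-(-1)^e)$ (the paper cites \cite[Theorem 3]{Vasi} for exactly the formula you flagged as needing justification), bound $p^2$ by $(4q^{\phi(2e)})^2$ via \cite[Lemma 2.1(f)]{glasby}, and treat $\PSU_3(q)$ separately through its maximal subgroups. The only caveats are bookkeeping: the generic inequality only takes over at $e\geqslant 9$, so each of $e=3,\dots,8$ needs its own explicit estimate rather than just $m_0\in\{2,3\}$, and the $\PSU_3(q)$ subcases $p\mid q-1$ and $p\mid q^2-q+1$ are infinite families in $q$ that must be settled by the structural torus/Borel arguments you sketch (as the paper does, via the maximal subgroups $(q^2-q+1){:}3$ and $[q^3]{:}(q^2-1)$) rather than by a \cite{GAP} check.
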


\begin{proof}
 Suppose now that $S\cong \PSU_n(q)$. Then
\begin{align*}
|S|=\frac{q^{\frac{n(n-1)}{2}}(q+1)(q^2-1)(q^3+1)\cdots(q^n-(-1)^n)}{(n,q+1)(q+1)}.
\end{align*}
Obviously, $p$ is coprime to $2r$ since otherwise $n=2$, contradicting $n\geqslant 3$. 


If $n\geqslant 4$, then by Lemma \ref{sylowstructure}, $p$ does not divide $q+1$.
Let
$$e=min\{ k\geqslant 1\big|(-q)^k\equiv 1(\bmod ~p)\}.$$
Since $P\cong C_p$, $n/2<e\leqslant n$. Lemma \ref{Litianze} implies that $n_p(\PSU_n(q))=n_p(\GU_n(q))<p^2$ because $\SU_n(q)\unlhd \GU_n(q)$ and $|\GU_n(q):\SU_n(q)|$ is not divisible by $p$. 
In the light of \cite [Theorem 3.9] {Wilson}, $\GU_e(q)\leqslant \GU_n(q)$, so $n_p(\GU_e(q))\leqslant n_p(\GU_n(q))<p^2$. In order to prove our result, it is enough to prove that $n_p(\GU_e(q))\geqslant p^2$. For the convenience, let $G:=\GU_e(q)$ and let $Q$ denote the Sylow $p$-subgroup of $G$.
Using \cite [Theorem 3] {Vasi} we can obtain
\begin{align*}
|N_G(Q)|=|C_{(q^e-(-1)^e)_p}|\cdot |C_{(q^e-(-1)^e)_{p'}}|\cdot |C_e|=e\cdot (q^e-(-1)^e).
\end{align*}
Therefore,
\begin{align}\label{4}
 n_p(G)
=\frac{q^{\frac{e(e-1)}{2}}(q+1)(q^2-1)\cdots (q^{e-1}-(-1)^{e-1})}{e}.
\end{align}
Clearly, $q^{e-1}+1\geqslant q^{e-1}-1\geqslant e$ since $e\geqslant 3$. By the above formula (\ref{4}), we have
\begin{align}\label{su}
n_p(G)\geqslant q^{\frac{e(e-1)}{2}}(q+1)(q^2-1)\cdots (q^{e-2}-(-1)^{e-2}).
\end{align}
If $e$ is even then $p$ divides $q^e-1$ and if $e$ is odd then $p$ divides $q^e+1$. Thus, the order of $q$ in the multiplicative group $(\mathbb{Z}/p \mathbb{Z})^{\ast}$ is either $e$ or $e/2$ when $e$ is even, and it is at most $2e$ when $e$ is odd by Lemma \ref{primitive}.
Implying that the order of $q$ in this group is at most $2e$. By the properties of Euler's totient function and \cite[Lemma 2.1(f)] {glasby}, we have
$$p^2\leqslant (\Phi_{2e}(q))^2<(4q^{\phi(2e)})^2\leqslant 16q^{4e-2}.$$
We will examine $n_p(G)$ by discussing $e$. Assume $e\geqslant 9$, we have $$n_p(G)\geqslant q^{\frac{e(e-1)}{2}}\geqslant 16q^{4(e-1)}\geqslant p^2,$$ a contradiction.
If $e=8$ or $7$, then by the equality (\ref{su}), it is easy to check that $n_p(G)\geqslant 16q^{4e-2}$, these cases are impossible.
Let $e=6$, so that $p$ is a primitive prime divisor of $q^6-1$ or $q^3-1$, this follows that $p^2\leqslant (4q^{\phi(3)})^2=(4q^{\phi(6)})^2=16q^4$ with the help of \cite[Lemma 2.1(f)] {glasby}. On the other hand,
 $$n_p(G)\geqslant q^{15}(q+1)(q^2-1)(q^3+1)(q^4-1)>q^{15}>16q^4>p^2,$$
 by the equality (\ref{4}), contradicting $n_p(G)<p^2$. 
Assume $e=5$, the formula (\ref{4}) implies that 
 \begin{align*}
  n_p(G)&=\frac{q^{10}(q+1)(q^2-1)(q^3+1)(q^4-1)}{5}
 \geqslant q^{16}(q-1)^2(q^2-q+1).
 \end{align*}
Recall that $p^2\leqslant (\Phi_{10}(q))^2< (4q^{\phi(10)})^2=16q^8$ by \cite[Lemma 2.1(f)] {glasby}. Thus, $n_p(G)\geqslant
q^{16}(q-1)^2(q^2-q+1)>16q^8>p^2$. Assume now that $e=4$, in other words, $p$ is a primitive prime divisor of $q^4-1$ or $q^2-1$, so that $p^2\leqslant (4q^\phi(2))^2\leqslant (4q^{\phi(4)})^2=16q^4$ by using \cite[Lemma 2.1(f)] {glasby} again. In the light of the
formula (\ref{4}),
 \begin{align*}
n_p(G)=\frac{q^6(q+1)(q^2-1)(q^3+1)}{4}
\geqslant q^6(q^3+1)(q-1)>16q^4>p^2, 
 \end{align*}
which yields a contradiction.
Finally, assume $e=3$, then $n_p(G)=\frac{q^3(q+1)(q^2-1)}{3}$ by the formula (\ref{4}). Since $p^2\leqslant (\Phi_6(q))^2=(q^2-q+1)^2\leqslant 3q^4$, we have
\begin{align*}
n_p(G)=\frac{q^3(q+1)(q^2-1)}{3}\geqslant 3q^4\geqslant p^2
\end{align*}
if $q\geqslant 4$. Finally, if $q=3$ then $p=7$ by the definition of $e$ and $(p,2r)=1$. Moreover, $n_7(G)=288$. If $q=2$ then $p=3$
and $n_p(G)=24$. Both of them are greater than $p^2$, we deduce a contradiction.

Next, we suppose that $n=3$, that is, $S\cong \PSU_3(q)$ where $q>2$. In the light of \cite [Lemma 2.2]{Wu}, $n_p(S)=n_p(\SU_3(q))<p^2$. Write $H:=\SU_3(q)$ and $Q$ the Sylow $p$-subgroup of $H$, we have 
$$|H|=q^3(q-1)(q+1)^2(q^2-q+1).$$ We claim that $p\nmid q+1$. If this was not the case, then $p=3$ because the Sylow $p$-subgroup of $S$ is isomorphic to $C_p$, since $H$ has a maximal subgroup of type $(q+1)^2:S_3$ (see \cite [Table 8.5]{Bray}), this implies that $27$ would be a divisor of $|H|$, and hence $3^2\big ||S|$, contradicting $P\cong C_p$. 
Assume $p\mid q^2-q+1$, by \cite [Table 8.5]{Bray}, $H$ has a maximal subgroup $M$ of type $(q^2-q+1):3$, thus $H$ has a Sylow $p$-subgroup $Q$ such that $Q\leqslant M$ because $(|H:M|, q^2-q+1)=1$. Since $Q\unlhd_c C_{q^2-q+1}\unlhd M$, $Q$ is normal in $M$. It is obvious that $Q$ is not normal in $H$ since $p$ does not divide $q+1$ and $H$ has an unique normal subgroup $Z(H)$ with $Z(H)=(q+1,3)$. 
 Thus $M=N_H(Q)\geqslant N_M(Q)=M$ by the maximality of $M$, therefore
\begin{align*}
 n_p(H)=|H:((q^2-q+1):3)|=\frac{q^3(q-1)(q+1)^2}{3}\geqslant q^4(q-1).
\end{align*}
Clearly, if $q\geqslant 4$ then $q^4(q-1)\geqslant 3q^4\geqslant (q^2+1)^2\geqslant (q^2-q+1)^2\geqslant p^2$ and if $q=3$ then $p=7$, $n_7(H)=288>49$, a contradiction.
Finally, if $p\mid q-1$, then $p^2\leqslant (q-1)^2\leqslant 2q^2$. By applying \cite [Table 8.5] {Bray} again, $H$ has a maximal subgroup $M$ isomorphic to $[q^3]:(q^2-1)$. Note that $Q$ is a Sylow $p$-subgroup of $M$ because $|H:M|$ is not divisible by $p$. We claim that $N_M(Q)=C_{q^2-1}$. In fact, we assume that $N:=N_M(Q)\cap [q^3]$ is a nontrivial $r$-group, thus $N\unlhd N_M(Q)$ since $[q^3]\unlhd M$. This implies that $Q\leqslant N_M(Q)\leqslant N_M(N)$. Observe that $(|N|, |Q|)=1$, so $mn=nm$ for any $m\in Q$, $n\in N$, that is, $H$ has an element with order $rp$, where $p$ is a primitive prime divisor of $q-1$. Since $p$ is an odd prime and $p\mid q-1$, $q-1\neq 2^k$ for some integer $k$. By applying \cite [Table 4] {Vasiliev}, $H$ does not have an
element of order $rp$, a contradiction. Therefore,  $N=1$, and $N_M(Q)=C_{q^2-1}$. Consequently,
$$n_p(M)=|M:N_M(Q)|=q^3\geqslant 2q^2\geqslant p^2.$$ 
This implies that $n_p(H)\geqslant n_p(M)\geqslant p^2$, a
contradiction.
\end{proof}

\begin{prop}\label{symplecticsimple}
If $S\cong \PSp_{2n}(q)$ where $q=r^f\geqslant 2$ and $n\geqslant 2$, then there does not exist $p\in \pi(S)$ such that $n_p(S)<p^2$.
\end{prop}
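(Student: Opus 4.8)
The plan is to follow the same template used for the linear and unitary cases: reduce to a Sylow-normalizer count in a classical group of small rank, bound $n_p$ from below, and play it off against the Zsigmondy/cyclotomic upper bound $p^2 \le (\Phi_m(q))^2 < (4q^{\phi(m)})^2$ coming from \cite[Lemma 2.1(f)]{glasby}. First I would dispose of the defining characteristic: if $p = r$ then, since $P \cong C_p$ by Lemma \ref{sylowstructure} and the Sylow $r$-subgroup of $\PSp_{2n}(q)$ has order $q^{n^2}$, we are forced into $q = p$ and $2n^2 = 1$ — impossible — so $p$ is coprime to $2r$ throughout. Then I would let $e = \min\{k \ge 1 : q^k \equiv 1 \pmod p\}$ (equivalently use the $\pm q$ order as in the unitary argument), note that a cyclic maximal torus of $\Sp_{2n}(q)$ containing $P$ has order dividing $q^{e}-1$ or $q^{e/2}+1$ with $e \le 2n$, so that $C_p \cong P$ forces $n/2 < \lceil e/2 \rceil \le n$, i.e. $e$ (or $e/2$) sits in a narrow range, and $n$ is correspondingly small.

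Next, as in the $\GL$ and $\GU$ arguments, I would use $n_p(\PSp_{2n}(q)) = n_p(\Sp_{2n}(q))$ (the center is a $2$-group, so by Lemma \ref{Litianze} it does not affect the $p$-count) and embed a suitable smaller symplectic or linear-type subgroup containing a full Sylow $p$-subgroup — a torus $\GL_1(q^e)$-type or $\GU$-type subgroup of $\Sp_{2n}(q)$ — to get a clean lower bound $n_p(S) \ge n_p(\text{small subgroup})$. Using the known Sylow-normalizer order in such a subgroup (of the shape $e \cdot (q^{e/?} \pm 1)$), $n_p$ of that subgroup is a polynomial in $q$ of the form $q^{\binom{d}{2}}(q\mp1)(q^2-1)\cdots / (\text{small})$; for $e$ (or $e/2$) at least about $3$ this already dwarfs $16q^{4e-2} \ge p^2$, exactly as in Propositions \ref{linearsimple} and \ref{unitarysimple}. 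That kills all but finitely many configurations — roughly $e/2 \in \{1,2\}$ together with $n \in \{2,3\}$ and a handful of small $q$.

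For the surviving small cases I would invoke the maximal-subgroup structure of $\PSp_4(q)$ and $\PSp_6(q)$ from \cite[Tables 8.12, 8.14, 8.28, 8.29]{Bray}: whenever $P$ lies in a cyclic maximal torus that is normal in a maximal subgroup $M$, Lemma \ref{cyclicmaximal} gives $n_p(S) = |S:M|$ outright, and one checks $|S:M| \ge p^2$ by the cyclotomic estimate; when $P$ lies in a parabolic or in a torus of type $(q\pm1)^2$ the hypothesis $P \cong C_p$ is contradicted directly (a $(q\pm1)^2$ torus would force $p^2 \mid |S|$). The residual genuinely small groups $\PSp_4(2)' \cong A_6$, $\PSp_4(3)$, $\PSp_6(2)$, $\PSp_4(4)$, etc., I would simply settle with \cite{GAP} as the paper does elsewhere. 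The main obstacle I anticipate is bookkeeping the torus structure of $\Sp_{2n}(q)$: unlike $\GL$ and $\GU$, the maximal tori of $\Sp_{2n}(q)$ come in several shapes indexed by signed partitions of $n$, so pinning down precisely which torus a given $C_p$ can sit in — and hence the exact normalizer order entering $n_p$ — requires care; once the torus is identified, the inequalities are routine and parallel to the earlier propositions.
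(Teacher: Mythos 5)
Your plan is essentially the paper's proof: after noting $(p,2r)=1$ and pinning the relevant order $e$ of $q$ modulo $p$ in the range $(n/2,n]$, the paper passes to the subgroup $\Sp_{2e}(q)$, reads off the Sylow normalizer order $2e(q^e\pm 1)$ from Gross's theorem, and beats the cyclotomic bound $p^2\le(\Phi_{2e}(q))^2<16q^{4e-2}$ uniformly for $e\ge 2$, leaving only the single residual case $(e,q)=(2,2)$, $p=5$, where $n_p=36>25$. The one practical difference is economy: the explicit normalizer formula in $\Sp_{2e}(q)$ makes the torus bookkeeping, the appeal to Bray's tables, and the GAP checks for small ranks that you anticipate unnecessary.
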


\begin{proof}
Assume $S\cong \PSp_{2n}(q)$, where $n\geqslant 2$ and $q=r^f\geqslant 2$.
Then $$|S|=\frac{q^{n^2}(q^2-1)(q^4-1)\cdots(q^{2n}-1)}{(2,q-1)}.$$
Since $P\cong C_p$, $(p, 2r)=1$. Set 
$$e=min\{k\geqslant 1\big| q^{2k}\equiv 1(\bmod ~p)\},$$
then $p$ is a primitive prime divisor of $q^e-1$ or $q^{2e}-1$ by Lemma \ref{primitive}. Clearly, $n/2<e\leqslant n$ (because $P\cong C_p$). By \cite [Theorems 3.7 and 3.8] {Wilson}, 
$S$ has a subgroup isomorphic to
$\Sp_{2e}(q)$. Note that $n_p(\Sp_{2e}(q))\leqslant n_p(S)<p^2$, so we can prove our proposition by considering $n_p(\Sp_{2e}(q))$. For convenience, let $G:=\Sp_{2e}(q)$ and $Q\in Syl_p(G)$, then $$|G|=q^{e^2}(q^2-1)(q^4-1)\cdots (q^{2e}-1).$$

We first suppose that $p$ is a primitive prime divisor of $q^{2e}-1$, Clearly, $e=2e/(2e,2)$. By \cite [Theorem 2.6] {Gross},
\begin{align*}
 |N_G(Q)|=2e(q^e+(-1)^{2e})=2e(q^e+1),
\end{align*}
so that
\begin{align}\label{5}
n_p(G)
=\frac{q^{e^2}(q^2-1)(q^4-1)\cdots(q^{2e-2}-1)(q^e-1)}{2e}.
\end{align}
If $e\geqslant 4$, then $q^e-1\geqslant 2e$ and $q^{e^2}(q^2-1)(q^4-1)(q^6-1)\geqslant q^{e^2+3}(q^6-1)\geqslant 16q^{4e-2}$, thus
\begin{align*}
 n_p(G)\geqslant q^{e^2}(q^2-1)(q^4-1)(q^6-1)\geqslant 16q^{4e-2}.
\end{align*}
Recall that $p$ is a primitive prime divisor of $q^{2e}-1$,
\begin{align*}
p^2\leqslant (\Phi_{2e}(q))^2<(4q^{\phi(2e)})^2\leqslant 16q^{4e-2}
\end{align*}
by \cite[Lemma 2.1(f)] {glasby}.
This follows that $n_p(G)\geqslant 16q^{4e-2}\geqslant p^2$, a contradiction.
Assume $e=3$ then 
$$n_p(G)=\frac{q^{9}(q^2-1)(q^4-1)(q^3-1)}{6}\geqslant q^{12}.$$ 
\cite[Lemma 2.1(f)] {glasby} shows that $p^2\leqslant (\Phi_6(q))^2\leqslant
(4q^2)^2=16q^{4}\leqslant q^{12}$, thus $p^2\leqslant n_p(G)$.
We now assume $e=2$ then
\begin{align*}
n_p(G)=\frac{q^4(q^2-1)^2}{4}
\end{align*}
by the formula (\ref{5}) and $p^2\leqslant (\Phi_4(q))^2\leqslant (4q^{\phi(4)})^2=16q^4$. It is not difficult to get that $n_p(G)\geqslant p^2$ if $q\geqslant 3$.
We are left to examine the case $q=2$, meaning that $p$ is a primitive prime divisor of $2^4-1$, and thus $p=5$ and $n_p(G)=36$. Therefore, $n_p(G)>25=p^2$. Finally, if $e=1$, then $n=1$ since
$n<2e\leqslant 2n$, thus $S\cong \PSp_2(q)\cong \PSL_2(q)$, which has been considered in the case of linear groups.

We next suppose that $p$ is a primitive prime divisor of $q^e-1$, thus $e$ is odd by the minimality of $e$. Observe that
$e=e/(e,2)$. Using \cite [Theorem 2.6] {Gross} again, we obtain
\begin{align*}
n_p(G)&=\frac{q^{e^2}(q^2-1)(q^4-1)\cdots(q^{2e-2}-1)(q^{2e}-1)}{2e(q^e-1)}\\&=\frac{q^{e^2}(q^2-1)(q^4-1)\cdots (q^{2e-2}-1)(q^e+1)}{2e}.
\end{align*}
If $e\geqslant 3$ then $q^e+1\geqslant q^e-1\geqslant 2e$, so that $$n_p(G)\geqslant q^{e^2}(q^2-1)(q^4-1)\geqslant (q^2+1)(q+1)^2q^{e^2}\geqslant 16q^{2e-2}.$$ 
Obviously, $p^2\leqslant (\Phi_e(q))^2<
(4q^{\phi(e)})^2\leqslant 16q^{2e-2}$. Thus $n_p(G)\geqslant p^2$.
Since $e$ is odd, thus $e=1$ and then $n=1$. In this case $S\cong \PSp_2(q)\cong \PSL_2(q)$, which has been considered before.
\end{proof}

\begin{prop}\label{orthoddsimple}
    If $S\cong \OO_{2n+1}(q)$ where $q=r^f$ is odd and $n\geqslant 2$, then there does not exist $p\in \pi(S)$ such that $n_p(S)<p^2$.
\end{prop}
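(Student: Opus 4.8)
The quickest route is to use an exceptional isomorphism. Since $q$ is odd, there is the classical isomorphism $\OO_{2n+1}(q)\cong\PSp_{2n}(q)$ (see, e.g., \cite{Wilson}); indeed $\mathrm{P}\Omega_{2n+1}(q)=\Omega_{2n+1}(q)$ has trivial centre when $q$ is odd, and both groups have order $\tfrac12\,q^{n^2}\prod_{i=1}^{n}(q^{2i}-1)$. Hence the statement is immediate from Proposition~\ref{symplecticsimple}: if some $p\in\pi(S)$ satisfied $n_p(S)<p^2$, then, transporting $p$ through the isomorphism, we would get $n_p(\PSp_{2n}(q))<p^2$, which Proposition~\ref{symplecticsimple} forbids. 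Since $n\geqslant 2$ and $q$ is odd (so $q\geqslant 3$), $\PSp_{2n}(q)$ is simple and no exceptional small-case issue arises.

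\textbf{A self-contained alternative.} If one instead wants an argument internal to the orthogonal groups, parallel to Propositions~\ref{symplecticsimple} and \ref{unitarysimple}, the plan is: by Lemma~\ref{sylowstructure} a Sylow $p$-subgroup $P$ of $S$ is isomorphic to $C_p$, so $(p,2r)=1$; set $e=\min\{k\geqslant 1\mid q^{2k}\equiv 1\ (\bmod~p)\}$, so that $p$ is a primitive prime divisor of $q^{e}-1$ or of $q^{2e}-1$ by Lemma~\ref{primitive}, and $n/2<e\leqslant n$ because $P\cong C_p$. First I would locate a subgroup $G\cong\Omega_{2e+1}(q)$ of $S$ arising from the orthogonal decomposition of the natural module into a nondegenerate $(2e+1)$-space and its complement (cf.\ \cite{Wilson}), so that $n_p(G)\leqslant n_p(S)<p^2$ and it suffices to bound $n_p(G)$. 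Next, for $Q\in Syl_p(G)$ I would compute $|N_G(Q)|$ from Gross's description of Sylow normalizers in finite groups of Lie type \cite{Gross}: $Q$ lies in a maximal torus of order $q^e+1$ (when $p\mid q^{2e}-1$) or $q^e-1$ (when $p\mid q^e-1$), with $|N_G(Q)|=2e(q^e\pm1)$, whence $n_p(G)=|G|/|N_G(Q)|$ is an explicit product of factors $q^{2i}-1$ times a power of $q$. Finally I would compare with $p^2\leqslant(\Phi_{2e}(q))^2<(4q^{\phi(2e)})^2$ from \cite[Lemma~2.1(f)]{glasby}: for $e$ not too small the power of $q$ in $n_p(G)$ already exceeds $p^2$, and the remaining cases $e\in\{2,3\}$ together with small $q$ are cleared by direct computation in \cite{GAP}, exactly as in the symplectic case. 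Because of the low-rank coincidence $\Omega_{2e+1}(q)\cong\PSp_{2e}(q)$, this second route simply re-derives what the isomorphism already delivers.

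\textbf{Main obstacle.} In the self-contained version the delicate point is the orthogonal bookkeeping: pinning down which maximal torus of $\Omega_{2e+1}(q)$ contains the cyclic Sylow $p$-subgroup ($+$ versus $-$ type, and the discriminant/spinor-norm conditions that decide membership in $\Omega$ rather than $\mathrm{SO}$), and verifying that the chosen $(2e+1)$-dimensional orthogonal subgroup really contains a full Sylow $p$-subgroup of $S$. Invoking $\OO_{2n+1}(q)\cong\PSp_{2n}(q)$ bypasses all of this, so that is the approach I would adopt, presenting the internal computation only if a referee prefers uniformity with the neighbouring propositions.
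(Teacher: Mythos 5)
Your main route rests on a false claim: for odd $q$ and $n\geqslant 3$ the groups $\OO_{2n+1}(q)$ and $\PSp_{2n}(q)$ are \emph{not} isomorphic. They do have the same order, but this pair is precisely the classical example of non-isomorphic finite simple groups of equal order (the isomorphism $B_n\cong C_n$ holds only for $n\leqslant 2$ when $q$ is odd, and for all $n$ only in even characteristic). Equality of orders does not let you ``transport $p$ through the isomorphism,'' so as written the quick argument collapses for every $n\geqslant 3$. The paper effects the same reduction to Proposition~\ref{symplecticsimple} legitimately but differently: it uses $|\OO_{2n+1}(q)|=|\PSp_{2n}(q)|$ together with \cite[Corollary 2.2]{Ahan}, which asserts that for odd $p$ the Sylow $p$-normalizers of $\OO_{2n+1}(q)$ and $\PSp_{2n}(q)$ have the same order; dividing equal group orders by equal normalizer orders gives $n_p(\OO_{2n+1}(q))=n_p(\PSp_{2n}(q))$ with no isomorphism needed. (The case $p=2$ is excluded separately since $P\cong C_p$ forces $n=1$.) If you want to salvage your write-up, replace the isomorphism claim by this citation; the same correction is needed where you invoke ``the low-rank coincidence $\Omega_{2e+1}(q)\cong\PSp_{2e}(q)$'' at the end of your self-contained sketch, which fails for $e\geqslant 3$. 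Your self-contained alternative is plausible in outline but is only a plan, and the delicate points you yourself flag (which torus contains $P$, whether $\Omega_{2e+1}(q)$ captures a full Sylow $p$-subgroup) are exactly the ones that would need to be carried out.
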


\begin{proof}
Suppose $S\cong \OO_{2n+1}(q)$ where $q=r^f$ is odd, then $|\OO_{2n+1}(q)|=|\PSp_{2n}(q)|$. If $p=2$, then $n=1$ since $P\cong C_2$, a contradiction. If $p\neq 2$ then, by \cite[Corollary 2.2]{Ahan},
$|N_{\OO_{2n+1}(q)}(P_1)|=|N_{\PSp_{2n}(q)}(P_2)|$ where $P_1$, $P_2$ are Sylow $p$-subgroups of $\OO_{2n+1}(q)$ and $\PSp_{2n}(q)$,
respectively. Thus $n_p(\OO_{2n+1}(q))=n_p(\PSp_{2n}(q))\geqslant p^2$ by
Proposition \ref{symplecticsimple}, a contradiction. 
\end{proof}


\begin{prop}\label{orthaddsimple}
Assume $S\cong \OO^+_{2n}(q)$, where $q=r^f$ and $n\geqslant 3$. Then there does not exist $p\in \pi(S)$ such that $n_p(S)<p^2$.
\end{prop}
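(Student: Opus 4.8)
The plan is to follow the strategy of Propositions \ref{linearsimple}, \ref{unitarysimple} and \ref{symplecticsimple}: bound $n_p(S)$ below by the Sylow $p$-number of a small classical subgroup of $S$ in which a Sylow $p$-subgroup lies in a Coxeter torus, so that Gross's normalizer formula applies cleanly, and then compare with $p^2\leqslant(\Phi_d(q))^2<(4q^{\phi(d)})^2$ via \cite[Lemma 2.1(f)]{glasby}.

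First I would write $$|S|=\frac{q^{n(n-1)}(q^n-1)(q^2-1)(q^4-1)\cdots(q^{2(n-1)}-1)}{(4,q^n-1)}$$ and observe, by Lemma \ref{sylowstructure}, that $P\cong C_p$, so $|S|_p=p$. As $n\geqslant3$, a Sylow $2$-subgroup of $S$ is non-cyclic and $r^{n(n-1)}\mid|S|$ with $n(n-1)\geqslant6$, whence $(p,2r)=1$. Put $e=\min\{k\geqslant1\mid q^{2k}\equiv1\pmod p\}$; by Lemma \ref{primitive}, $p$ is a primitive prime divisor of $q^e-1$ (then $e$ is odd, hence $e\geqslant3$, since $e=1$ would give $p\mid q^2-1$ and $p\mid q^4-1$, both dividing $|S|$) or of $q^{2e}-1$. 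Comparing with the factors $q^n-1$ and $q^{2i}-1$ $(1\leqslant i\leqslant n-1)$ and using $p^2\nmid|S|$, I would conclude $e\leqslant n$, with $e\leqslant n-1$ when $p$ is a primitive prime divisor of $q^{2e}-1$, and that if $e=2$ then $n=3$, so that $S\cong\PSL_4(q)$ and Proposition \ref{linearsimple} applies.

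For the reduction I would invoke the subgroup structure of orthogonal groups (\cite[Chapter 3]{Wilson}): the stabilizer in $\Omega^+_{2e}(q)$ of a pair of complementary totally singular $e$-spaces contains $\SL_e(q)$; an orthogonal decomposition of the natural module into a non-degenerate $2e$-subspace of $-$ type and its (necessarily $-$ type) perp contains $\Omega^-_{2e}(q)$, available whenever $e\leqslant n-1$; and $\SU_e(q)\leqslant\GO^-_{2e}(q)$ for odd $e$. Projecting to $S=\OO^+_{2n}(q)$, whose centre is a $2$-group coprime to the odd prime $p$, one obtains a subgroup of $S$ whose Sylow $p$-number equals $n_p(\GL_e(q))$ when $p\mid q^e-1$, $n_p(\GU_e(q))$ when $p\mid q^e+1$ and $e$ is odd, and $n_p(\OO^-_{2e}(q))$ when $p\mid q^e+1$ and $e$ is even, using that $p$ is odd and coprime to $q-1$ and to $q+1$ (so passing to $\SL_e(q)$, $\SU_e(q)$ and their quotients does not change $n_p$). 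In each case $n_p(S)$ is at least this number.

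In the first two cases one checks that the parameter $e$ is exactly the one occurring in Propositions \ref{linearsimple} and \ref{unitarysimple}, so those results give $n_p(S)\geqslant p^2$ except for the configurations there with $n_p$ too small; for $e\geqslant3$ this leaves only $(e,q,p)\in\{(3,2,7),(3,3,13)\}$ (using $\PSL_3(2)\cong\PSL_2(7)$), where $p^2\nmid|S|$ forces $n\leqslant5$, the isomorphism $\OO^+_6(q)\cong\PSL_4(q)$ disposes of $n=3$, and $n_7(\OO^+_{8}(2))$, $n_7(\OO^+_{10}(2))$, $n_{13}(\OO^+_{8}(3))$, $n_{13}(\OO^+_{10}(3))$ are verified to exceed $p^2$ with \cite{GAP}. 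In the third case $e$ is even with $e\geqslant4$, and Gross's theorem \cite{Gross} gives $|N_{\Omega^-_{2e}(q)}(Q)|=2e(q^e+1)/(2,q-1)$ for a Sylow $p$-subgroup $Q$, so $$n_p(\OO^-_{2e}(q))=\frac{q^{e(e-1)}(q^2-1)(q^4-1)\cdots(q^{2(e-1)}-1)}{2e}\geqslant\frac{q^{(e-1)(2e-1)}}{2e},$$ which already exceeds $16q^{2\phi(2e)}\geqslant p^2$ for every $q\geqslant2$ and $e\geqslant4$. The step I expect to be the main obstacle is precisely this reduction: pinning down the embeddings into $\Omega^+_{2n}(q)$, tracking the harmless index-at-most-$2$ and central-$2$-group discrepancies on passing to $S$ (so that $n_p$ is unchanged, using that $p$ is odd), and checking that the $e$ obtained matches the parameter of the linear and unitary propositions so their estimates transfer verbatim; the remaining inequalities and the handful of \cite{GAP} checks are then routine.
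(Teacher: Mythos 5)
Your proposal is correct and shares the paper's overall strategy ($P\cong C_p$, define $e$, split on whether $p$ is a primitive prime divisor of $q^e-1$ or $q^{2e}-1$, push $P$ into a classical subgroup and compare a normalizer formula against $p^2\leqslant(\Phi_d(q))^2<(4q^{\phi(d)})^2$), but your two reductions differ from the paper's. In the branch $p\mid q^e-1$ the paper embeds the full $\GL_n(q)$ from \cite[Theorem 3.12]{Wilson}, so its only surviving exception with $n\geqslant 3$ is $\OO^+_6(3)\cong\PSL_4(3)$, killed by Proposition \ref{linearsimple}; your choice of $\GL_e(q)$ leaves the inconclusive values $n_7(\GL_3(2))=8$ and $n_{13}(\GL_3(3))=144$ and hence costs you the four extra \cite{GAP} checks for $\OO^+_8(2)$, $\OO^+_{10}(2)$, $\OO^+_8(3)$, $\OO^+_{10}(3)$ — these do succeed (e.g.\ $n_7(\OO^+_8(2))\geqslant n_7(A_8)=960$ and $n_{13}(\OO^+_{10}(3))\geqslant n_{13}(\PSL_5(3))>13^2$), and in fact the $\PSL_4$ and $\PSL_5$ computations already recorded in the proof of Proposition \ref{linearsimple} would let you avoid GAP entirely. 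In the branch $p\mid q^e+1$ the paper argues uniformly in $\OO^+_{2(e+1)}(q)$ (or in $S$ itself when $e=n-1$) using the normalizer order $2e(q^e+1)(q+1)/(q^{e+1}-1,4)$ from \cite[Lemma 2.4]{Ahan}, whereas you split on the parity of $e$: routing odd $e$ through $\SU_e(q)\leqslant\GO^-_{2e}(q)$ and Proposition \ref{unitarysimple} is arguably cleaner since it reuses a proved result, while for even $e\geqslant4$ your computation in $\OO^-_{2e}(q)$ is fine except that the normalizer order there should be cited from \cite[Lemma 2.4]{Ahan} rather than \cite{Gross}, whose statements quoted in this paper cover the linear and symplectic cases; the exact denominator does not affect your inequality. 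Your preliminary constraints ($e=1$ impossible, $e=2$ forcing $n=3$ and $\OO^+_6(q)\cong\PSL_4(q)$, $e\leqslant n-1$ in the $q^{2e}-1$ branch) agree with the paper's condition $n-1<2e\leqslant 2n-2$.
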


\begin{proof}
Let $S\cong \OO^+_{2n}(q)$, then 
$$|S|=\frac{q^{n(n-1)}(q^n-1)(q^2-1)(q^4-1)\cdots(q^{2n-2}-1)}{(4,q^n-1)}.$$
It is not difficult to check that $(p,2r)=1$ by Lemma \ref{sylowstructure}.
Since $\Omega^+_{2n}(q)\unlhd \GO^+_{2n}(q)$ and $p$ does not divide $|\GO^+_{2n}(q):\Omega^+_{2n}(q)|$, $n_p(\GO^+_{2n}(q))=n_p(S)<p^2$ with the help of Lemma \ref {Litianze}.
Let $G:=\GO^+_{2n}(q)$. Then $$|G|=2q^{n(n-1)}(q^n-1)(q^2-1)(q^4-1)\cdots(q^{2n-2}-1).$$ Set
\begin{align*}
e=min\{k\geqslant 1\big|q^{2k} \equiv 1(\bmod ~p)\}.
\end{align*}
Since $|G:S|$ is not divisible by $p$, the Sylow $p$-subgroup $Q$ of $G$ is also a cyclic subgroup of order $p$ by Lemma \ref{sylowstructure}. Therefore, $n-1<2e\leqslant 2n-2$ and $p$ is a primitive prime divisor of $q^e-1$ or $q^{2e}-1$ by Lemma \ref{primitive}.

Assume that $p$ is a primitive prime divisor of $q^{2e}-1$. In the light of \cite [Theorem 3.12] {Wilson}, $G$ has a maximal subgroup of
type $\GO^+_{2k}(q)\times \GO^+_{2m}(q)$, with $k+m=n$ and $0<k<n$. If $e<n-1$, then $G$ has a subgroup $H$ isomorphic to $\GO^+_{2(e+1)}(q)$. Obviously, the Sylow $p$-subgroup of $H$ is not trivial because
\begin{align*}
|H|=2q^{(e+1)e}(q^{e+1}-1)(q^2-1)(q^4-1)\cdots (q^{2e}-1)
\end{align*}
and $p\mid q^{2e}-1$. Note that $p$ is a primitive prime divisor of $q^{2e}-1$ and $n_p(G)\geqslant n_p(H)=n_p(\OO^+_{2(e+1)}(q))$.
Let $S_1$ denote $\OO^+_{2(e+1)}(q)$ and $Q_1$ denote its Sylow $p$-subgroup.
In the light of \cite [Lemma 2.4]{Ahan} we have
\begin{align*}
|N_{S_1}(Q_1)|=2e(q^e+1)(q+1)/(q^{e+1}-1,4),
\end{align*}
so that
\begin{align}\label{6}
n_p(S_1)
=\frac{q^{(e+1)e}(q^{e+1}-1)(q-1)(q^4-1)(q^6-1)\cdots (q^{2e-2}-1)(q^e-1)}{2e}.
\end{align}
Recall that $p^2\leqslant (\Phi_{2e}(q))^2<(4q^{\phi(2e)})^2\leqslant 16q^{4e-2}$.
Let $e\geqslant 4$, we have $q^{(e+1)e}(q^{e+1}-1)(q^4-1)\geqslant (q^{e+1}-1)q^{e^2+e+3}\geqslant 16q^{4e-2}$ and $q^e-1\geqslant 2e$. Thus
\begin{align*}
 n_p(S_1)\geqslant q^{(e+1)e}(q^{e+1}-1)(q^4-1)\geqslant 16q^{4e-2}\geqslant p^2,
\end{align*}
therefore, $n_p(S)=n_p(G)\geqslant n_p(H)=n_p(S_1)>p^2$, we get a contradiction. 
Next if $e=3$ then  
\begin{align*}
n_p(S_1)=q^{12}(q^4-1)(q-1)(q^4-1)(q^3-1)/6
\end{align*}
and if $e=2$ then
\begin{align*}
n_p(S_1)=q^6(q^3-1)(q-1)(q^2-1)/4
\end{align*} 
by the formula (\ref{6}). By $p^2\leqslant (\Phi_{2e}(q))^2<(4q^{\phi(2e)})^2$, it is easy to check that $n_p(S_1)\geqslant p^2$ unless $e=q=2$, in which case $p=5$ and $n_p(S_1)=336>5^2$ (since $p$ is a primitive prime divisor of $q^4-1$). Hence
$n_p(G)\geqslant n_p(H)=n_p(S_1)\geqslant p^2$, a contradiction. Finally let $e=1$, then from $n-1<2e\leqslant 2n-2$ deduce that $n=2$, this contracts the
assumption. Next if $e=n-1$, that is $p$ is a primitive prime divisor of $q^{2(e-1)}-1$. We can use \cite [Lemma 2.4] {Ahan} again to calculate $n_p(S)$ directly. We have
\begin{align*}
|N_S(P)|=2(q^{n-1}+1)(q+1)(n-1)/(4,q^n-1),
\end{align*}
and hence
\begin{align}\label{7}
 n_p(S)
=\frac{q^{n(n-1)}(q^n-1)(q-1)(q^4-1)\cdots(q^{2n-4}-1)(q^{n-1}-1)}{2(n-1)}.
\end{align}
If $n\geqslant 5$, then $q^{n-1}-1\geqslant 2(n-1)$, so that
\begin{align*}
n_p(S)\geqslant q^{n(n-1)}(q^n-1)\geqslant 16q^{4n-4}\geqslant (4q^{\phi(2n-2)})^2> (\Phi_{2n-2}(q))^2\geqslant p^2,
\end{align*}
a contradiction. Similarly to the previous case, 
if $n=4$ then
\begin{align*}
n_p(S)=q^{12}(q^4-1)(q-1)(q^3-1)(q^4-1)/6
\end{align*}
and if $n=3$ then 
\begin{align*}
n_p(S)
=q^6(q^3-1)(q-1)(q^2-1)/4
\end{align*}
by the quality (\ref{7}).
Using \cite[Lemma 2.1(f)] {glasby}, we have $p^2<16q^4$, this implies that $n_p(S)>p^2$ except for $(n,q)=(3,2)$. Suppose $(n,q)=(3,2)$, then $p$ is a primitive prime divisor of $2^4-1$, implying that $p=5$ and $n_p(S)=336>p^2$, a contradiction.

Next, we suppose that $p$ is a primitive prime divisor of $q^e-1$. Then, by the minimality of $e$, $e$ is odd. Moreover, if $p$ divides $q^n-1$, then $n$ must be odd because the Sylow $p$-subgroup of $G$ is a cyclic group of order $p$ and $n<2n-2$. Thus, in this case, $p$ is a primitive prime divisor of $q^n-1$. Therefore, $e\leqslant n$. \cite[Theorem 3.12] {Wilson} implies that $G$ admits a subgroup $H$ of type $\GL_n(q)$. Note that
\begin{align*}
 |H|=q^{n(n-1)/2}(q-1)(q^2-1)(q^3-1)\cdots(q^n-1)
\end{align*}
and $p\mid q^e-1$, where $e\leqslant n$, thus the Sylow $p$-subgroup of $G$ is contained in $H$. In particular, $$n_p(G)\geqslant n_p(H)\geqslant n_p(\SL_n(q))=n_p(\PSL_n(q)).$$ By Proposition \ref{linearsimple}, we can get $n_p(G)\geqslant p^2$ unless $(n,p,q)=(3,13,3)$ and $n=2$, namely $G=\GO^+_{6}(3)$ and $S=\OO^+_6(3)\cong\PSL_4(3)$ (Since $n\geqslant 3$). In the light of Proposition \ref{linearsimple}, $n_{13}(S)\geqslant 13^2$, a contradiction.
\end{proof}

\begin{prop}\label{orthsubsimple}
Assume $S\cong \OO^-_{2n}(q)$, where $q=r^f$ and $n\geqslant 2$. Then there does not exist $p\in \pi(S)$ such that $n_p(S)<p^2$.
\end{prop}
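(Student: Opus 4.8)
The plan is to run, for $\OO^-_{2n}(q)$, the same scheme used for $\OO^+_{2n}(q)$ in Proposition~\ref{orthaddsimple}. Since $P\cong C_p$, Lemma~\ref{sylowstructure} gives $(p,2r)=1$, and since $p\nmid|\GO^-_{2n}(q):\Omega^-_{2n}(q)|$, Lemma~\ref{Litianze} gives $n_p(S)=n_p(\GO^-_{2n}(q))$; write $G:=\GO^-_{2n}(q)$ and recall
\[
|S|=\frac{q^{n(n-1)}(q^n+1)(q^2-1)(q^4-1)\cdots(q^{2n-2}-1)}{(4,q^n+1)},
\]
with $|G|$ differing from this by a factor coprime to the odd prime $p$. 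As $\OO^-_4(q)\cong\PSL_2(q^2)$ and $\OO^-_6(q)\cong\PSU_4(q)$, the cases $n\in\{2,3\}$ are covered by Propositions~\ref{linearsimple} and \ref{unitarysimple}, so we may assume $n\geqslant4$. Set $e=\min\{k\geqslant1\mid q^{2k}\equiv1(\bmod~p)\}$; by Lemma~\ref{primitive}, $p$ is a primitive prime divisor of $q^e-1$ (then $e$ odd) or of $q^{2e}-1$ (then $p\mid q^e+1$). Computing the $p$-part of $|G|$ factor by factor --- only $q^n+1$ and those $q^{2i}-1$ with $e\mid i$ can contribute --- and using $P\cong C_p$, one finds that either $\tfrac{n-1}{2}<e\leqslant n-1$, or $e=n$ and $p$ is a primitive prime divisor of $q^{2n}-1$ (the Coxeter case).

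Consider first $p\mid q^e+1$ with $e\leqslant n-2$. Then $q^{2e}-1\mid|\Omega^+_{2(e+1)}(q)|$, and since an orthogonal direct sum of a $(+)$-space and a $(-)$-space has type $(-)$, the group $G$ contains $\GO^+_{2(e+1)}(q)\times\GO^-_{2(n-e-1)}(q)$, hence $\GO^+_{2(e+1)}(q)$; as $e+1\geqslant3$, Proposition~\ref{orthaddsimple} gives $n_p(G)\geqslant n_p(\OO^+_{2(e+1)}(q))\geqslant p^2$, a contradiction. Next suppose $p$ is a primitive prime divisor of $q^e-1$, so $e$ is odd; then $e\mid2n$ forces $e\mid n$, so $p\nmid q^n+1$ and hence $e\leqslant n-1$. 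The Levi $\GL_e(q)$ of a parabolic of $\GO^+_{2e}(q)$ lies in $G$ (again by $(+)\perp(-)=(-)$), and $p\mid q^e-1\mid|\GL_e(q)|$, so $n_p(G)\geqslant n_p(\GL_e(q))\geqslant n_p(\PSL_e(q))$. As $e$ is odd and $e>\tfrac{n-1}{2}\geqslant\tfrac32$, we have $e\geqslant3$, and Proposition~\ref{linearsimple} gives $n_p(\PSL_e(q))\geqslant p^2$ unless $(e,q,p)=(3,3,13)$; in that case $\tfrac{n-1}{2}<3\leqslant n-1$ leaves $n\in\{4,5,6\}$, and $n_{13}(\OO^-_{2n}(3))>13^2$ by a crude order estimate (or by \cite{GAP}).

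There remain $e=n-1$ and $e=n$ with $p\mid q^e+1$. If $e=n-1$, then $p\mid q^{n-1}+1\mid|\GO^-_{2(n-1)}(q)|$ while $p\nmid|\GO^+_2(q)|$, so using $(-)\perp(+)=(-)$ we get $n_p(G)\geqslant n_p(\OO^-_{2(n-1)}(q))$: this is $n_p(\PSU_4(q))\geqslant p^2$ if $n-1=3$ (Proposition~\ref{unitarysimple}) and a Coxeter case of dimension $\geqslant8$ otherwise. If $e=n$, the Sylow $p$-subgroup lies in the Coxeter torus of $S$, and the ${}^2D_n(q)$ counterpart of formula~(\ref{7}) (cf.\ \cite{Ahan}) gives $|N_S(P)|=c\,(q^n+1)/(4,q^n+1)$ for some constant $c\leqslant2n$, so
\[
n_p(S)=\frac{q^{n(n-1)}(q^2-1)(q^4-1)\cdots(q^{2n-2}-1)}{c}\;\geqslant\;\frac{q^{n(n-1)+(n-1)^2}}{2n}\;=\;\frac{q^{(n-1)(2n-1)}}{2n}.
\]
Since $\phi(2n)\leqslant n$, \cite[Lemma 2.1(f)]{glasby} gives $p^2\leqslant(\Phi_{2n}(q))^2<16q^{2\phi(2n)}\leqslant16q^{2n}$, and for $n\geqslant4$ one has $q^{(n-1)(2n-1)}/(2n)\geqslant16q^{2n}$; thus $n_p(S)\geqslant p^2$, which finishes the proof.

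The part I expect to be the main obstacle is pinning down the exact order of $N_S(P)$ for each maximal torus containing $P$ in the minus-type group --- the precise ${}^2D_n(q)$ analogue of formula~(\ref{7}), the effect of the factor $(4,q^n+1)$, and the distinction between the sub-Coxeter torus ($e=n-1$) and the Coxeter torus ($e=n$) --- together with checking the few residual small configurations ($(e,q,p)=(3,3,13)$ above, and those already dealt with inside Propositions~\ref{linearsimple} and \ref{orthaddsimple}) and being careful that the $n=2$ reduction $\OO^-_4(q)\cong\PSL_2(q^2)$ does meet the Fermat exceptions of Proposition~\ref{linearsimple}, so that the substantive range is $n\geqslant4$; the needed inequalities are routine once $n$ is not too small.
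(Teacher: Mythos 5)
Your proposal is correct in substance and shares the paper's skeleton: reduce to $\GO^-_{2n}(q)$ via Lemma \ref{Litianze}, split on $e=\min\{k\geqslant 1\mid q^{2k}\equiv 1\ (\bmod~p)\}$, and push the primitive-prime-divisor cases with $e\leqslant n-2$ into $\GO^+_{2(e+1)}(q)$ (resp.\ $\GL_e(q)\leqslant\GO^+_{2e}(q)$, where the paper instead quotes Proposition \ref{orthaddsimple} for $\OO^+_{2e}(q)$ directly --- same content). It diverges in the two delicate cases. For $e=n-1$ with $p\mid q^{n-1}+1$ the paper splits on the parity of $q$, using the parabolic $[q^{2(n-1)}].(C_{q-1}\times\GO^-_{2(n-1)}(q))$ with a normalizer-intersection and element-order argument for $q$ odd and $\Sp_{2(n-1)}(q)$ for $q$ even; you instead descend to $\OO^-_{2(n-1)}(q)$ and recurse into the Coxeter case, which is shorter and avoids the element-order argument. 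For $p\mid q^n+1$ the paper runs a chain $\GU_n(q)$, $\GU_d(q^{2^a})$, $\GO^-_8(u)$ ending in a torus-centralizer estimate, whereas you bound $|N_S(P)|$ for the Coxeter torus directly; this is the step carrying the citation burden you flag (one needs $C_S(P)$ to equal the Coxeter torus and the relative Weyl group to have order at most $2n$, which is in \cite{Ahan} but is not quoted in the paper for minus type). Note that even the crude bound $|N_S(P)|\leqslant|\Aut(P)|\cdot|C_S(P)|\leqslant(p-1)(q^n+1)$ --- the device the paper itself uses in its $\OO^-_8(u)$ subcase --- already suffices for $n\geqslant 4$, so your Coxeter case is robust. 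Your numerical estimates and the residual check at $(e,q,p)=(3,3,13)$, $n\in\{4,5,6\}$ (via the minimal permutation degree) are fine.

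Your parenthetical observation that $\OO^-_4(q)\cong\PSL_2(q^2)$ \emph{meets} the Fermat exceptions is not a side issue: for $q^2+1=p$ a Fermat prime (e.g.\ $\OO^-_4(2)\cong\PSL_2(4)$ with $n_5=6<25$, or $\OO^-_4(4)\cong\PSL_2(16)$ with $n_{17}=120<289$) one has $n_p(S)<p^2$, so the proposition as literally stated fails at $n=2$. The paper's own proof misses this because the asserted bound $2e\leqslant 2n-2$ wrongly excludes $e=n$, and the final $p\mid q^n+1$ case invokes a subgroup $\GO^-_8(q^{2^{a-2}})$ that does not exist when $a=1$. Your repair --- defer $n\in\{2,3\}$ to Propositions \ref{linearsimple} and \ref{unitarysimple} through the isomorphisms $\OO^-_4(q)\cong\PSL_2(q^2)$ and $\OO^-_6(q)\cong\PSU_4(q)$ and restrict the substantive claim to $n\geqslant 4$ --- is the right one, and it leaves Theorem \ref{sylowsimple} unaffected since these groups already occur there as $\PSL_2(p-1)$.
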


\begin{proof}
Let $S\cong \OO^-_{2n}(q)$. Then 
$$|S|=\frac{q^{n(n-1)}(q^n+1)(q^2-1)(q^4-1)\cdots (q^{2n-2}-1)}{(4,q^n+1)}.$$
 By Lemma \ref{sylowstructure}, we can get that $p$ is coprime to $2r$, that is $p$ is an odd integer.
Similarly as Proposition \ref{orthaddsimple}, $$n_p(\OO^{-}_{2n}(q))=n_p(\Omega^-_{2n}(q))=n_p(\GO^-_{2n}(q))<p^2$$ because $|\GO^-_{2n}(q):\Omega^-_{2n}(q)|$ is not divisible by $p$. Now we are concerned with studying $n_p(\GO^-_{2n}(q))$. Let $G:=\GO^-_{2n}(q)$, then
$$|G|=2q^{n(n-1)}(q^n+1)(q^2-1)(q^4-1)\cdots (q^{2n-2}-1).$$
In particular, the Sylow $p$-subgroup $Q$ of $G$ is also isomorphic to $C_p$ because $p$ does not divide $|\GO^-_{2n}(q):S|$. Set
\begin{align*}
e=min\{k\geqslant 1\big| q^{2k}\equiv 1(\bmod ~p)\},
\end{align*}
Thus $n-1<2e\leqslant 2n-2$ by Lemma \ref{sylowstructure}, and $p$ is a primitive prime divisor of $q^e-1$ or $q^{2e}-1$ with the help of Lemma \ref{primitive}. Let $e=1$, then, $n=2$ since $n-1<2e\leqslant 2n-2$. Thus, $G\cong \GO^-_{4}(q)$ and $S\cong \OO^-_{4}(q)\cong \PSL_2(q^2)$. Moreover, $p|(q^2-1)$. By Proposition \ref{linearsimple} we have $S\cong \PSL_2(p-1)$ or $S\cong \PSL_2(p+1)$. Thus $q^2=p-1$ or $q^2=p+1$. Since $q^2-1$ is divisible by $p$ and $(p,2r)=1$, $p=q^2-1$, thus $q=2$ and $p=3$, and hence $S\cong \OO^-_4(2)\cong \PSL_2(4)$. By Proposition \ref{linearsimple} we have $n_3(S)>p^2$, a contradiction. In the rest of the proof we will assume that $e\geqslant 2$.

Assume $p$ is a primitive prime divisor of $q^e-1$, then $e\leqslant n-1$ and $e$ is odd by the minimality of $e$, so that $e\geqslant 3$. \cite[Theorem 3.11]{Wilson} shows that $G$ has a maximal subgroup of type $\GO^+_{2k}(q)\times \GO^-_{2m}(q)$, with $k+m=n$ and $0<k<n$. Thus, we can consider the subgroup $H$ of type $\GO^+_{2e}(q)$. Note that $p|(q^e-1)$ and
$$|H|=2q^{e(e-1)}(q^e-1)(q^2-1)(q^4-1)\cdots (q^{2e-2}-1),$$
so the Sylow $p$-subgroup of $G$ is contained in $H$.  Moreover, $n_p(G)\geqslant n_p(H)$. By Proposition \ref{orthaddsimple}, $n_p(G)\geqslant n_p(H)=n_p(\OO^+_{2e}(q))\geqslant p^2$ since $e\geqslant 3$, a contradiction.

Now suppose that $p$ is a primitive prime divisor of $q^{2e}-1$. Then $e\leqslant n-1$. Assume $e<n-1$, we consider the subgroup $H$ of type $\GO^+_{2(e+1)}(q)$. Note that $p|(q^{2e}-1)$ and 
$$|H|=2q^{(e+1)e}(q^{e+1}-1)(q^2-1)(q^4-1)\cdots (q^{2e}-1).$$ 
Thus $H$ contains the Sylow $p$-subgroup of $G$. In particular, $e+1\geqslant 3$. By using Proposition \ref{orthaddsimple}, $$n_p(G)\geqslant n_p(H)=n_p(\OO^+_{2(e+1)}(q))\geqslant p^2$$ (Since $p$ does not divide $|H:\Omega^+_{2(e+1)}(q)|$, we have $n_p(H)=n_p(\OO^+_{2(e+1)}(q))$), a contradiction. Suppose that $e=n-1$, that is, $p$ is a primitive prime divisor of $q^{2n-2}-1$, so that $p|(q^{n-1}+1)$.
Assume $q$ is odd. By using \cite[Theorem 3.11]{Wilson} $G$ has a maximal subgroup $M$ of type $[q^{2(n-1)}].(C_{q-1}\times \GO^-_{2(n-1)}(q))$. Since $Q\cong C_p$ and $p||M|$ (Recall that $Q\in Syl_p(G)$), any Sylow $p$-subgroup of $M$ is a Sylow $p$-subgroup of $G$. 
For convenience, we write $$H:=[q^{2(n-1)}], \quad K:=C_{q-1}\times \GO^-_{2(n-1)}(q)$$ and $T\in Syl_p(M)$. We claim that $H\cap N_M(T)=1$. If this was not the case,
then $N:=H\cap N_M(T)$ would not be a trivial $r$-group, and $N\unlhd N_M(T)$ since $H\unlhd M$. We deduce that $T\leqslant N_M(T)\leqslant N_M(N)$. Thus the commutator of $T$ and $N$ would be trivial because $(|N|,|T|)=1$ (Since $N^T=N$, $T^N=T$ and $(|N|,|T|)=1$). Therefore $M$ would have an element of order $rp$, contradicting \cite [Table 4]{Vasiliev}, which shows that $M$ does not have an element of order $rp$. Hence $H\cap N_M(T)=1$ and $N_M(T)\leqslant K$. It follows that
\begin{align*}
 n_p(M)=|M:N_M(T)|\geqslant |M:K|=q^{2(n-1)}.
\end{align*}
Recall that $p\mid q^{n-1}+1$, we write $kp=q^{n-1}+1$ for some positive integer $k$.
Since both $p$ and $q$ are odd numbers, $k\geqslant 2$. Hence
\begin{align*}
 p^2=\frac{q^{2n-2}+2q^{n-1}+1}{k^2}\leqslant \frac{3q^{2n-2}}{k^2}\leqslant q^{2(n-1)}\leqslant n_p(M).
\end{align*} 
Therefore, $n_p(G)\geqslant n_p(M)\geqslant p^2$, a contradiction. Assume $q$ is even, i.e., $q=2^f$. By \cite [Theorem 3.11]{Wilson}, $G$ has a maximal subgroup of type $\Sp_{2(n-1)}(q)$. Obviously, $Q\leqslant M$ and $$n_p(G)\geqslant n_p(\Sp_{2(n-1)}(q))=n_p(\PSp_{2(n-1)}(q))>p^2$$ unless $n=2$ by Proposition \ref{symplecticsimple}. Finally, let $n=2$, then $G\cong \GO^-_{4}(q)$ and $S\cong \OO^-_{4}(q)\cong \PSL_2(q^2)$.  By Proposition \ref{linearsimple}, $q^2=p-1$ or $q^2=p+1$, this implies that $p=q^2+1$ or $p=q^2-1$, contradicting the fact that $p|(q+1)$.

Finally, we suppose that $p\mid q^n+1$. If $n$ is odd, then $G$ has a subgroup $H:=\GU_n(q)$ by \cite[Theorem 3.11]{Wilson}. Obviously,
$H$ contains the Sylow $p$-subgroup of $G$ and $$n_p(G)\geqslant n_p(H)\geqslant n_p(\PSU_n(q))\geqslant p^2$$ by Proposition \ref{unitarysimple}, a contradiction. Assume $n$ is even. Write $n=2^a\cdot d$ with $d$ odd. \cite[Theorem 3.11]{Wilson} implies 
that $G$ has a subgroup $H$ isomorphic to $\GU_d(q^{2^a})$. It is easy to check that the Sylow $p$-subgroup of $H$ is not trivial and $n_p(G)\geqslant n_p(H)$. If $d\geqslant 3$, then by Proposition \ref{unitarysimple}, 
$$n_p(G)\geqslant n_p(H)\geqslant n_p(\PSU_d(q^{2^a}))\geqslant
p^2,$$ a contradiction. Let $d=1$, that is $G\cong \GO^-_{2\cdot 2^a}(q)$ and $p|(q^{2^a}+1)$. Similarly as above, $G$ has a subgroup $H$ of type $\GO^{-}_{2\cdot 2}(q^{2^{a-1}})$. Note that $$n_p(S)=n_p(G)\geqslant n_p(H) =n_p(\Omega^{-}_4(q^{2^{a-1}}))=n_p(\PSL_2(q^{2^a})).$$ Since $n_p(S)<p^2$, by Proposition \ref{linearsimple}, $q^{2^a}+1=p$ is a Fermat prime number, so that $q$ is even. On the other hand, $G$ also has a subgroup $M$ of type $\GO^-_8(u)$ where $u=q^{2^{a-2}}$ and it is easy to see that $P$ is contained in the simple group $A:=\OO^-_8(u)$. Moreover, $p=u^4+1$. Since $n_p(S)=n_p(G)\geqslant n_p(M)=n_p(A)$ and every maximal torus of $A$ has order
$$\frac{1}{(2,q^4+1)}(q^{n_1}-1)(q^{n_2}-1)\cdots (q^{n_k}-1)(q^{l_1}+1)(q^{l_2}+1)\cdots (q^{l_t}+1)$$ 
for appropriate partition $4=n_1+n_2+\cdots+n_k+l_1+l_2+\cdots+l_t$ of $4$, where $t$ is odd. This implies that $P$ is a maximal torus of $A$ and $C_A(P)=P$. Note that $N_A(P)/C_A(P)$ is isomorphic to a subgroup of $\Aut(P)\cong C_{u^4}$, thus $|N_A(P)|\leqslant u^4(u^4+1)$. It follows that 
\begin{align*}
n_p(A)&=\frac{|A|}{|N_A(P)|}\geqslant \frac{u^{12}(u^4+1)(u^2-1)(u^4-1)(u^6-1)}{u^4(u^4+1)}\\&
=u^8(u^2-1)(u^4-1)(u^6-1)>u^{17}.
\end{align*}
Since $p=u^4+1$, $p^2=(u^4+1)^2\leqslant 3u^8<u^{17}$, it implies that $n_p(A)>p^2$. Therefore, $n_p(S)\geqslant n_p(A)>p^2$, a contradiction.
\end{proof}

Next we will consider the exceptional group of Lie type.  

\begin{prop}\label{suzukisimple}
Let $S\cong Sz(q)$ where $q=2^{2m+1}\geqslant 8$. Then there does not exist $p\in \pi(S)$ such that $n_p(S)<p^2$.
\end{prop}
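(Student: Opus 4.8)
The plan is to run the same template that has been used for the classical groups: reduce to a convenient overgroup or use the known maximal subgroup structure of the Suzuki groups, and then show that the resulting Sylow number is forced to be at least $p^2$. First I would recall that $|Sz(q)| = q^2(q^2+1)(q-1)$ with $q = 2^{2m+1} \geqslant 8$, and that $q^2 + 1 = (q - \sqrt{2q} + 1)(q + \sqrt{2q} + 1)$, so the odd part of $|Sz(q)|$ factors as $(q-1)(q-\sqrt{2q}+1)(q+\sqrt{2q}+1)$, with these three factors pairwise coprime. Since $P \cong C_p$ by Lemma \ref{sylowstructure}, $p$ is odd (the Sylow $2$-subgroup of $Sz(q)$ has order $q^2 \geqslant 64$, so $p \neq 2$), and $p$ divides exactly one of $q-1$, $q - \sqrt{2q}+1$, $q+\sqrt{2q}+1$.

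Next I would invoke the classification of maximal subgroups of $Sz(q)$ (Suzuki's original list): up to conjugacy these are the Borel subgroup $[q^2]{:}C_{q-1}$ of order $q^2(q-1)$, the dihedral group $D_{2(q-1)}$, the Frobenius groups $C_{q-\sqrt{2q}+1}{:}C_4$ and $C_{q+\sqrt{2q}+1}{:}C_4$, and the subfield subgroups $Sz(q_0)$ where $q = q_0^k$ for a prime $k$. In each of the three cases for $p$, the Sylow $p$-subgroup $P$ lies inside a cyclic normal subgroup of one of the first four maximal subgroups $M$, so by Lemma \ref{cyclicmaximal} (or a direct normalizer argument as in Lemma \ref{cyclicmaximal}) we get $N_S(P) = M$ and hence $n_p(S) = |S:M|$. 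Concretely this gives $n_p(S) = q^2(q^2+1)$ when $p \mid q-1$, and $n_p(S) = q^2(q-1)(q+\sqrt{2q}+1)/4$ or $q^2(q-1)(q-\sqrt{2q}+1)/4$ in the other two cases.

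Then it remains the purely arithmetic step: in each case bound $p$ from above by the relevant factor and compare with $n_p(S)$. Since $p \mid q-1$ forces $p \leqslant q-1$, we get $n_p(S) = q^2(q^2+1) > (q-1)^2 \geqslant p^2$. Since $p \mid q \pm \sqrt{2q} + 1$ forces $p \leqslant q + \sqrt{2q} + 1 < 2q$ (using $q \geqslant 8$), we get $n_p(S) \geqslant q^2(q-1)(q-\sqrt{2q}+1)/4$; as $q - \sqrt{2q} + 1 > q/2$ and $q - 1 \geqslant 7$ for $q \geqslant 8$, this exceeds $q^3 > 4q^2 > p^2$, so in all cases $n_p(S) \geqslant p^2$, contradicting the hypothesis $n_p(S) < p^2$.

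The only remaining loose end is the subfield subgroups $Sz(q_0)$, but these do not contain a Sylow $p$-subgroup of $S$ in the relevant ``cyclic torus'' case (the primitive prime divisors of $q^2+1$ associated to $S$ do not divide $|Sz(q_0)|$ when $q_0$ is a proper subfield), so they play no role; alternatively one simply observes $N_S(P)$ must be one of the four listed non-subfield maximal subgroups because $N_S(P)$ normalizes the cyclic group $P$ of odd prime order. I expect the main obstacle to be bookkeeping the exact orders of $N_S(P)$ in the $q \pm \sqrt{2q}+1$ cases and confirming the coprimality of the three odd factors of $|Sz(q)|$ cleanly; once that is in place the inequalities are routine because $n_p(S)$ always carries the full factor $q^2$, which already dwarfs $p^2 < 4q^2$.
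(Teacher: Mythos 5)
Your proposal is correct and follows essentially the same route as the paper: both identify $N_S(P)$ with one of the maximal subgroups $D_{2(q-1)}$ or $C_{q\pm\sqrt{2q}+1}{:}C_4$ via Lemma~\ref{cyclicmaximal} and then compare the index $|S:M|$ (which always carries the factor $q^2$) against the bound $p^2\leqslant(q+\sqrt{2q}+1)^2<5q^2$. The only blemish is a dropped factor of $2$ in the case $p\mid q-1$ (the index of $D_{2(q-1)}$ is $q^2(q^2+1)/2$, not $q^2(q^2+1)$), which does not affect the inequality.
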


\begin{proof}
Let $S\cong Sz(q)$. Then $$|S|=q^2(q-1)(q-\sqrt{2q}+1)(q+\sqrt{2q}+1).$$ Since $P\cong C_p$, $p\neq 2$. Assume $p\mid (q-1)$. By \cite[Table 8.16]{Bray} $S$ has a maximal subgroup $M$ of type $D_{2(q-1)}$. Obviously, $P\leqslant C_{q-1}\unlhd D_{2(q-1)}$. Thus 
$$n_p(S)=\frac{|S|}{|M|}=\frac{q^2(q^2+1)}{2}$$
with the help of Lemma \ref{cyclicmaximal}. Note that $p^2\leqslant (q-1)^2$ and $q\geqslant 8$. Therefore $n_p(S)=\frac{q^2(q^2+1)}{2}\geqslant p^2$, a contradiction. Suppose $p\mid (q-\sqrt{2q}+1)$, then 
$$n_p(S)=\frac{q^2(q-1)(q^2+\sqrt{2q}+1)}{4}\geqslant q^2(q+\sqrt{2q}+1)$$ by \cite[Table 8.16]{Bray} and Lemma \ref{cyclicmaximal}. Since $p\mid (q-\sqrt{2q}+1)$, $p^2\leqslant (q-\sqrt{2q}+1)^2\leqslant (q+1)^2\leqslant 4q^2$. Implying that $n_p(S)\geqslant q^2(q+\sqrt{2q}+1)\geqslant 4q^2\geqslant p^2$ since $q\geqslant 8$, it is impossible.
Finally, we assume that $p\mid (q+\sqrt{2q}+1)$. Then $p^2\leqslant (q+\sqrt{2q}+1)^2\leqslant 5q^2$. Similarly as above, 
$$n_p(S)=\frac{q^2(q-1)(q-\sqrt{2q}+1)}{4}=\frac{q^2((q-\sqrt{2q})q+(\sqrt{2q}-1))}{4}\geqslant q^3$$
because $q-\sqrt{2q}\geqslant 4$, it follows that $n_p(S)\geqslant q^3\geqslant 5q^2\geqslant p^2$, a contradiction.
\end{proof}

\begin{prop}\label{reesimple}
Assume $S\cong G_2(q)$ where $q\geqslant 3$. Then there does not exist $p\in \pi(S)$ such that $n_p(S)<p^2$.
\end{prop}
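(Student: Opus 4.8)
The plan is to follow the pattern of the preceding propositions: first pin down the primes $p$ that could possibly give $n_p(S)<p^2$, and then realize a Sylow $p$-subgroup inside a convenient subgroup of $S$ so as to invoke the earlier results.

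First I would record $|G_2(q)|=q^6(q^2-1)(q^6-1)=q^6(q-1)^2(q+1)^2(q^2+q+1)(q^2-q+1)$. By Lemma \ref{sylowstructure} we have $P\cong C_p$, so $p\neq r$ (otherwise $p^6$ divides $|S|$) and hence $(p,q)=1$; moreover $p$ divides neither $q-1$ nor $q+1$, for otherwise $p^2$ would divide $(q-1)^2$ or $(q+1)^2$. Therefore $p$ divides $(q^2+q+1)(q^2-q+1)$, and since $\gcd(q^2+q+1,q^2-q+1)$ divides $\gcd(q^2+q+1,2q)=1$, it divides exactly one of the two factors; a short check (using $p\nmid(q-1)(q+1)$) also shows $p\neq 2,3$, so $p\geqslant 5$.

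Next I would split into the two cases, using the subsystem subgroups of $G_2(q)$. If $p\mid q^2-q+1$, then $S$ contains a subgroup $H\cong\SU_3(q)$ (see \cite{Wilson}), and since $p$ divides $|H|=q^3(q-1)(q+1)^2(q^2-q+1)$ while $p^2$ does not, a Sylow $p$-subgroup of $H$ is a Sylow $p$-subgroup of $S$. As $p\nmid(3,q+1)=|Z(\SU_3(q))|$ we have $n_p(H)=n_p(\PSU_3(q))$, whence $n_p(S)\geqslant n_p(H)=n_p(\PSU_3(q))\geqslant p^2$ by Proposition \ref{unitarysimple} (applicable as $q\geqslant 3>2$), a contradiction. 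If instead $p\mid q^2+q+1$, then $S$ contains $H\cong\SL_3(q)$ (see \cite{Wilson}), again $n_p(H)=n_p(\PSL_3(q))$ since $p\nmid(3,q-1)$, and Proposition \ref{linearsimple} gives $n_p(\PSL_3(q))\geqslant p^2$ in every case except $(q,p)=(3,13)$, where $n_{13}(\PSL_3(3))=144<169$; away from that pair we conclude $n_p(S)\geqslant p^2$ as before.

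Finally I would dispose of the exceptional pair $(q,p)=(3,13)$, i.e.\ $S=G_2(3)$ with $P\cong C_{13}$, directly: $P$ lies in the cyclic maximal torus $T$ of order $q^2+q+1=13$, a generator of $P$ is a regular semisimple element so that $C_S(P)=T$, and $N_S(P)/T$ embeds in the relative Weyl group of $T$, of order $6$; hence $|N_S(P)|\leqslant 78$ and $n_{13}(G_2(3))=|G_2(3)|/|N_S(P)|\geqslant 4245696/78=54432>169$ (equivalently, $n_{13}(G_2(3))=54432$ by \cite{GAP}). This settles all cases. The only delicate point is quoting the internal structure of $G_2(q)$ — that $\SL_3(q)$ and $\SU_3(q)$ occur as subgroups, and, in the exceptional case, the order of the normalizer of the $\Phi_3$-torus; granting those facts, what remains is routine bookkeeping and a trivial size comparison, so I anticipate no substantial obstacle.
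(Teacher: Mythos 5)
Your proposal is correct and follows essentially the same route as the paper: restrict $p$ to a divisor of $q^2+q+1$ or $q^2-q+1$ via Lemma \ref{sylowstructure}, embed the Sylow $p$-subgroup in $\SL_3(q)$ or $\SU_3(q)$, and invoke Propositions \ref{linearsimple} and \ref{unitarysimple}. The only (immaterial) difference is the exceptional case $S\cong G_2(3)$, $p=13$: the paper bounds $n_{13}(S)$ below by the minimal permutation degree of $G_2(3)$ from the ATLAS, whereas you compute the normalizer of the order-$13$ torus directly; both comfortably exceed $13^2$.
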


\begin{proof}
Let $S\cong G_2(q)$. Then $$|S|=q^6(q-1)^2(q+1)^2(q^2+q+1)(q^2-q+1).$$ Since $P\cong C_p$, $p$ is coprime to $q(q^2-1)$. Thus, $p\mid (q^2+q+1)$ or $p\mid (q^2-q+1)$.  By \cite [Tables 8.30, 8.41 and 8.42] {Bray} $S$ has subgroups $\SU_3(q)$ and $\SL_3(q)$.
Assume $p$ divides $q^2+q+1$, then the Sylow $p$-subgroup $P$ of $S$ is contained in $\SL_3(q)$, so that $n_p(S)\geqslant n_p(\SL_3(q))=n_p(\PSL_3(q))\geqslant p^2$ unless $S\cong G_2(3)$ by \cite [Lemma 2.2]{Wu} and Proposition \ref{linearsimple}. Suppose $S\cong G_2(3)$, then $p=13$. Note that $S$ is simple, so $P$ is not normal in $G$, implying that there exists a maximal subgroup $M$ of $S$ such that $N_S(P)\leqslant M$, thus $n_p(S)=|S:N_S(P)|\geqslant |S:M|$. By applying \cite{CCNPW}, $n_p(S)\geqslant |S:M|>13^2$, it is impossible. Suppose $p$ divides $q^2-q+1$. Then $P\leqslant \SU_3(q)$ and $n_p(S)\geqslant n_p(\SU_3(q))=n_p(\PSU_3(q))\geqslant p^2$ with the help of \cite [Lemma 2.2]{Wu} and Proposition \ref{unitarysimple}, a contradiction.    
\end{proof}

\begin{prop}\label{R(q)simple}
If $S\cong {}^2G_2(q)$ where $q=3^{2m+1}\geqslant 27$, then there does not exist $p\in \pi(S)$ such that $n_p(S)<p^2$.
\end{prop}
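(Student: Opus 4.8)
The plan is to proceed as in the previous propositions, combining the list of maximal subgroups of $S={}^2G_2(q)$ from \cite{Bray} with Propositions \ref{linearsimple} and \ref{unitarysimple} (for subgroup arguments) and Lemma \ref{cyclicmaximal} (for torus arguments). Recall that $|S|=q^3(q-1)(q^3+1)=q^3(q-1)(q+1)(q^2-q+1)$ and that $\sqrt{3q}=3^{m+1}$ is an integer, so $q^2-q+1=(q-\sqrt{3q}+1)(q+\sqrt{3q}+1)$. Since $P\cong C_p$ by Lemma \ref{sylowstructure}, the $p$-part of $|S|$ equals $p$; hence $p\neq 3$ (as $p\nmid q$) and $p\neq 2$ (since $q$ is odd, $8\mid(q-1)(q+1)$, so $|S|_2\geqslant 8$). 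Therefore $p$ divides exactly one of $q-1,\ q+1,\ q-\sqrt{3q}+1,\ q+\sqrt{3q}+1$, and I split into these four cases.

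Suppose first $p\mid q-1$ or $p\mid q+1$. By \cite{Bray}, $S$ contains the involution centralizer $2\times\PSL_2(q)$, so $\PSL_2(q)\leqslant S$; as $p$ is odd and $p\mid q^2-1$ we have $p\mid|\PSL_2(q)|$, and the Sylow $p$-subgroup of $\PSL_2(q)$ is cyclic of order $p$ (being contained in a conjugate of $P$). Thus $n_p(\PSL_2(q))\leqslant n_p(S)<p^2$, so Proposition \ref{linearsimple} forces $\PSL_2(q)$ to be $\PSL_2(p)$, or $\PSL_2(p-1)$ with $p$ Fermat, or $\PSL_2(p+1)$ with $p$ Mersenne. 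The first gives $q=p$, which contradicts $p\mid q\mp 1$. The remaining possibilities compatible with $p\mid q\mp 1$ force $p=q+1=3^{2m+1}+1$ or $p=q-1=3^{2m+1}-1$; but $3^{2m+1}+1\equiv 0\pmod 4$ and $3^{2m+1}-1$ is even and $>2$, so neither is prime. Hence this case is impossible.

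Now suppose $p\mid q+\sqrt{3q}+1$; the case $p\mid q-\sqrt{3q}+1$ is entirely analogous. By \cite{Bray}, $S$ has a maximal subgroup $M$ that is an extension of $C_n$ by $C_6$ with $n=q+\sqrt{3q}+1$; since $p\nmid 6$ and $|S|_p=p$, a Sylow $p$-subgroup of $M$ lies in $C_n$ and is a Sylow $p$-subgroup of $S$, so Lemma \ref{cyclicmaximal} gives
\[
n_p(S)=|S:M|=\frac{q^3(q-1)(q+1)(q-\sqrt{3q}+1)}{6}.
\]
For $q\geqslant 27$ we have $\sqrt{3q}=3^{m+1}\leqslant q/3$, so $q-\sqrt{3q}+1>\tfrac23 q$ and $q+\sqrt{3q}+1<\tfrac32 q$; hence $n_p(S)>\tfrac{1}{10}q^6$ while $p^2\leqslant(q+\sqrt{3q}+1)^2<\tfrac94 q^2$, and $\tfrac{1}{10}q^6>\tfrac94 q^2$ for $q\geqslant 27$. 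This yields $n_p(S)>p^2$, a contradiction; similarly $p\mid q-\sqrt{3q}+1$ gives $n_p(S)=\tfrac16 q^3(q-1)(q+1)(q+\sqrt{3q}+1)>p^2$.

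The main obstacle is bookkeeping rather than anything deep: one must quote the maximal subgroups of ${}^2G_2(q)$ from \cite{Bray} correctly, verify in the torus cases that the cyclic factor $C_{q\pm\sqrt{3q}+1}$ is normal in $M$ and carries a full Sylow $p$-subgroup of $S$ so that Lemma \ref{cyclicmaximal} applies, and check that none of the four cases of Proposition \ref{linearsimple} survives the constraint $q=3^{2m+1}\geqslant 27$; the size estimates are then routine.
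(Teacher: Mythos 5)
Your proof is correct, and for the two ``torus'' cases $p\mid q\pm\sqrt{3q}+1$ it coincides with the paper's argument: both invoke the maximal subgroups $C_{q\pm\sqrt{3q}+1}{:}C_6$ from Bray--Holt--Roney-Dougal, apply Lemma \ref{cyclicmaximal} to get $n_p(S)=|S:M|$, and finish with routine size estimates (your constants $\tfrac{1}{10}q^6$ versus the paper's $q^4$ bound are both adequate). Where you genuinely diverge is in the cases $p\mid q-1$ and $p\mid q+1$. The paper treats these separately and directly: for $p\mid q+1$ it works inside the maximal subgroup $(2^2\times D_{(q+1)/2}){:}3$ and bounds $n_p(S)$ below by its index, and for $p\mid q-1$ it computes $N_M(Q)=C_{q-1}$ inside the Borel subgroup $[q^3]{:}C_{q-1}$ using the spectrum tables of Vasiliev--Vdovin, getting $n_p(M)=q^3>p^2$. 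You instead use the single involution-centralizer subgroup $2\times\PSL_2(q)$, note $n_p(\PSL_2(q))\leqslant n_p(S)<p^2$, and let the already-established Proposition \ref{linearsimple} do the work, with the arithmetic observation that $3^{2m+1}\pm1$ is even and greater than $2$, hence never prime, killing the surviving subcases. Your route is shorter and avoids the normalizer computation in the Borel subgroup, at the cost of leaning on the (correct, and used throughout the paper) monotonicity $n_p(H)\leqslant n_p(G)$ and on quoting one more maximal subgroup from the table; the paper's route is more self-contained within this proposition. One cosmetic point: when you conclude from Proposition \ref{linearsimple} that $\PSL_2(q)$ must be $\PSL_2(p)$, $\PSL_2(p-1)$ or $\PSL_2(p+1)$, you should also note that the remaining case $\PSL_3(3)$ of that proposition cannot occur since $\PSL_2(3^{2m+1})\not\cong\PSL_3(3)$ for $q\geqslant 27$; this is immediate from the orders but worth a clause.
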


\begin{proof}
Suppose $S\cong {}^2G_2(q)$. Then $$|S|=q^3(q+1)(q-1)(q-\sqrt{3q}+1)(q+\sqrt{3q}+1).$$ We first suppose that $p$ divides $q+1$. By \cite[Table 8.43]{Bray}, $S$ has a maximal subgroup $H$ of type $(2^2 \times D_{\frac{q+1}{2}}):3$. Since $|S:H|$ is not divisible by $p$, the Sylow $p$-subgroup $P$ of $S$ is contained in $H$. Moreover, $P\leqslant D_{\frac{q+1}{2}}$. Clearly, $P$ is normal in $D_{\frac{q+1}{2}}$, so $P$ is also normal in $K:= 2^2 \times D_{\frac{q+1}{2}}$. This implies that $K = N_K(P) \leqslant N_S(P) < S$. Thus $N_S(P)=K$ or $H$, and 
$$n_p(S)=|S:N_S(P)|\geqslant |S:H|=\frac{q^3(q^2-q+1)(q-1)}{6}.$$
Since $q\geqslant 27$, $n_p(S)>q^3\geqslant 3q^2\geqslant(q+1)^2\geqslant p^2$, a contradiction. 

If $p\mid q-1$. Applying \cite[Table 8.43]{Bray} again, $S$ has a maximal subgroup $M$ isomorphic to $[q^3]:C_{q-1}$. Let $Q$ be a Sylow $p$-subgroup of $M$. Then $N_M(Q)=C_{q-1}$ by using \cite[Table 5]{Vasiliev}, thus $n_p(M)=|M:N_M(Q)=|M:C_{q-1}|=q^3$. Since $n_p(S)\geqslant n_p(M)$ and $q^3>(q-1)^2\geqslant p^2$, $n_p(S)\geqslant p^2$, a contradiction. 

We now suppose that $p\mid q-\sqrt{3q}+1$. By using \cite[Table 8.43]{Bray}
again, $S$ has a maximal subgroup $M$ of type $C_{q-\sqrt{3q}+1}:C_6$. In the light of Lemma \ref{cyclicmaximal}, $N_S(P)=M$. Therefore,
\begin{align*}
n_p(S)=|S:M|=\frac{q^3(q^2-1)(q+\sqrt{3q}+1)}{6}\geqslant q^4.
\end{align*}
Note that $p^2\leqslant (q-\sqrt{3q}+1)^2\leqslant 4q^2< q^4$, so $n_p(S)\geqslant p^2$. Finally,
we assume that $p\mid q+\sqrt{3q}+1$. Similarly as above, we have
\begin{align*}
  n_p(S)=\frac{q^3(q^2-1)(q-\sqrt{3q}+1)}{6}\geqslant q^4.
\end{align*}
Observe that $p^2\leqslant (q+\sqrt{3q}+1)^2\leqslant 9q^2$ since $q\geqslant 27$.
Thus $n_p(S)\geqslant q^4\geqslant 9q^2\geqslant p^2$. These cases are impossible.

\end{proof}

\begin{prop}\label{{}^3D_4(q)simple}
If $S\cong {}^3D_4(q)$, then there does not exist $p\in \pi(S)$ such that $n_p(S)<p^2$.
\end{prop}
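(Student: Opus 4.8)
The plan is to identify $p$ from the order formula and then evaluate $n_p(S)$ via the normalizer of a maximal torus, in the spirit of the preceding propositions. Write $S\cong {}^3D_4(q)$ with $q=r^f$, so that
$$|S|=q^{12}(q^8+q^4+1)(q^6-1)(q^2-1)=q^{12}\,\Phi_1(q)^2\Phi_2(q)^2\Phi_3(q)^2\Phi_6(q)^2\,\Phi_{12}(q),$$
using $q^8+q^4+1=\Phi_3(q)\Phi_6(q)\Phi_{12}(q)$, $q^6-1=\Phi_1(q)\Phi_2(q)\Phi_3(q)\Phi_6(q)$ and $q^2-1=\Phi_1(q)\Phi_2(q)$. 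By Lemma~\ref{sylowstructure} the Sylow $p$-subgroup $P$ is cyclic of order $p$, hence $|S|_p=p$. In particular $p\neq r$ (otherwise $p^{12}\mid |S|$), and if $p$ divided $\Phi_d(q)$ for some $d\in\{1,2,3,6\}$ then $p^2\mid\Phi_d(q)^2\mid |S|$, a contradiction. Therefore $p\mid\Phi_{12}(q)=q^4-q^2+1$ and $p$ is coprime to $q\,\Phi_1(q)\Phi_2(q)\Phi_3(q)\Phi_6(q)$; since $q^4-q^2+1$ is always odd, $p$ is odd.

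Next I would use the classification of the maximal subgroups of ${}^3D_4(q)$ (due to Kleidman) to produce a maximal subgroup $M$ of type $C_{q^4-q^2+1}{:}C_4$, namely the normalizer of a maximal torus of order $\Phi_{12}(q)$. Since
$$|S:M|=\frac{q^{12}\,\Phi_1(q)^2\Phi_2(q)^2\Phi_3(q)^2\Phi_6(q)^2}{4}$$
is coprime to $p$ (using $p$ odd and $p\nmid q\,\Phi_1(q)\Phi_2(q)\Phi_3(q)\Phi_6(q)$), some Sylow $p$-subgroup of $S$ lies in $M$; replacing $P$ by a conjugate we may assume $P\leqslant M$, and then $P\in Syl_p(M)$. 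As $C_{q^4-q^2+1}\unlhd M$ has index $4$ coprime to $p$, we get $P\leqslant C_{q^4-q^2+1}$, so Lemma~\ref{cyclicmaximal} applies and gives
$$n_p(S)=|S:M|=\frac{q^{12}(q^6-1)^2}{4}.$$

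Finally I would estimate. From $p\mid q^4-q^2+1$ we get $p^2\leqslant(q^4-q^2+1)^2<q^8$, while $q\geqslant 2$ gives $q^6-1>q^6/2$, hence
$$n_p(S)=\frac{q^{12}(q^6-1)^2}{4}>\frac{q^{24}}{16}>q^8>p^2,$$
where the middle inequality holds because $q^{16}>16$. This contradicts $n_p(S)<p^2$, so no such $p$ exists. The only step that is not pure cyclotomic bookkeeping is the structural input of the second paragraph: knowing that ${}^3D_4(q)$ has the maximal subgroup $C_{q^4-q^2+1}{:}C_4$ — equivalently, that for $T$ the (unique up to conjugacy) maximal torus of order $\Phi_{12}(q)$ one has $C_S(P)=T$ and $|N_S(T):T|=4$ — and this is the part for which I would need to cite the literature carefully, although the general bound above then covers every prime power $q\geqslant 2$ uniformly with no case analysis.
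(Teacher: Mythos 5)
Your proposal is correct and follows essentially the same route as the paper: identify $p\mid q^4-q^2+1$ from the fact that $P\cong C_p$ forces $|S|_p=p$, pass to the maximal torus normalizer $M\cong C_{q^4-q^2+1}{:}C_4$ (the paper cites Bray--Holt--Roney-Dougal, Table 8.51), apply Lemma~\ref{cyclicmaximal} to get $n_p(S)=|S:M|=q^{12}(q^6-1)^2/4$, and compare with $p^2\leqslant(q^4-q^2+1)^2$. The only differences are cosmetic (your cyclotomic bookkeeping makes the step $p\mid\Phi_{12}(q)$ more explicit, and your final constants differ slightly from the paper's bound $n_p(S)\geqslant 8q^8\geqslant p^2$).
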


\begin{proof}
Suppose $S\cong {}^3D_4(q)$, then $$|S|=q^{12}(q^2+q+1)^2(q^2-q+1)^2(q^4-q^2+1)(q+1)^2(q-1)^2.$$ Since the Sylow $p$-subgroup $P$ of $S$ is isomorphic to $C_p$, $p$ divides $q^4-q^2+1$. By \cite [Table 8.51]{Bray}, $S$ has a maximal subgroup $M$ of type $C_{q^4-q^2+1}.C_4$. In the light
of Lemma \ref{cyclicmaximal} we have
\begin{align*}
 n_p(S)=|S:M|=\frac{q^{12}(q^2+q+1)^2(q^2-q+1)^2(q+1)^2(q-1)^2}{4}\geqslant 8q^8.
\end{align*} 
Clearly, $p^2\leqslant (q^4-q^2+1)^2\leqslant 8q^8$, so $n_p(S)\geqslant p^2$, a contradiction.
\end{proof}

\begin{prop}\label{F_4(q)simple}
If $S\cong F_4(q)$, then there does not exist $p\in \pi(S)$ such that $n_p(S)<p^2$.
\end{prop}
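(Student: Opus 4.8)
The plan is to exploit the two subsystem subgroups of $F_4(q)$ of type $B_4$ and of type ${}^3D_4$, together with the cyclotomic factorisation of $|S|$, in order to locate a Sylow $p$-subgroup of $S$ inside a group whose Sylow number has already been shown to be $\geqslant p^2$. First I would record that
$$|S|=q^{24}(q^2-1)(q^6-1)(q^8-1)(q^{12}-1)=q^{24}\Phi_1(q)^4\Phi_2(q)^4\Phi_3(q)^2\Phi_4(q)^2\Phi_6(q)^2\Phi_8(q)\Phi_{12}(q).$$
If $n_p(S)<p^2$ then $P\cong C_p$ by Lemma~\ref{sylowstructure}, so $|S|_p=p$; in particular $p\nmid q$ and $p^2\nmid|S|$. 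Since $p\mid|S|$ and $p\nmid q$, $p$ divides $q^m-1$ for some $m\in\{2,6,8,12\}$, hence $p\mid\Phi_d(q)$ for some $d$ dividing one of $2,6,8,12$, i.e. $d\in\{1,2,3,4,6,8,12\}$. A direct inspection of the displayed factorisation shows that each of $\Phi_1,\Phi_2,\Phi_3,\Phi_4,\Phi_6$ divides at least two of the four factors $q^2-1,q^6-1,q^8-1,q^{12}-1$ (for instance $\Phi_3(q)$ divides $q^6-1$ and $q^{12}-1$), whereas $\Phi_8$ divides only $q^8-1$ and $\Phi_{12}$ only $q^{12}-1$. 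Hence $d\in\{1,2,3,4,6\}$ would give $p^2\mid|S|$, a contradiction, and we are left with $p\mid\Phi_8(q)=q^4+1$ or $p\mid\Phi_{12}(q)=q^4-q^2+1$.

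Next I would dispose of the two surviving cases. Suppose first $p\mid\Phi_8(q)$, so $p\mid q^8-1$. Let $D\leqslant S$ be the subsystem subgroup of $F_4(q)$ of type $B_4$; its order is divisible by $q^8-1$, hence by $p$, so $D$ has a nontrivial $p$-subgroup, which — since every Sylow $p$-subgroup of $S$ has order $p$ — must itself be a Sylow $p$-subgroup of both $D$ and $S$. Thus $n_p(S)\geqslant n_p(D)$ by the elementary inequality $n_p(H)\leqslant n_p(G)$ for $H\leqslant G$. The centre $Z(D)$ has order prime to $p$, and $D/Z(D)\cong\PSp_8(q)$ when $q$ is even and $D/Z(D)\cong\OO_9(q)$ when $q$ is odd; by Lemma~\ref{Litianze} (applied with $G=H=D$), $n_p(D)=n_p(D/Z(D))$, and this is $\geqslant p^2$ by Proposition~\ref{symplecticsimple} for $q$ even and by Proposition~\ref{orthoddsimple} for $q$ odd. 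Hence $n_p(S)\geqslant p^2$, a contradiction. Suppose instead $p\mid\Phi_{12}(q)$. Here I would use that $F_4(q)$ contains ${}^3D_4(q)$ together with the identities
$$|{}^3D_4(q)|=q^{12}(q^8+q^4+1)(q^6-1)(q^2-1)\quad\text{and}\quad q^8+q^4+1=\Phi_3(q)\Phi_6(q)\Phi_{12}(q),$$
so that $p\mid|{}^3D_4(q)|$. As before a Sylow $p$-subgroup of ${}^3D_4(q)$ is a Sylow $p$-subgroup of $S$, whence $n_p(S)\geqslant n_p({}^3D_4(q))\geqslant p^2$ by Proposition~\ref{{}^3D_4(q)simple}, again a contradiction. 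Since no prime $p$ survives, the proposition follows.

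The main obstacle is bookkeeping rather than any new estimate: one must confirm the cyclotomic multiplicities in $|F_4(q)|$ carefully enough to pin $p$ down to $\Phi_8$ or $\Phi_{12}$, keep track of the isogeny type of the $B_4$-subgroup (so that the correct one of Propositions~\ref{symplecticsimple} and \ref{orthoddsimple} applies, and so that its centre is a $\{1,2\}$-group), and verify the divisibilities $q^8-1\mid|D|$ and $\Phi_{12}(q)\mid|{}^3D_4(q)|$. One should also note that $\mathrm{Sp}_8(q)$ (resp. $\mathrm{Spin}_9(q)$) and ${}^3D_4(q)$ are subgroups of $F_4(q)$ for every prime power $q$, so that the small cases — in particular $q=2$, where $\Phi_8(2)=17$ and $\Phi_{12}(2)=13$ — need no separate treatment. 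Everything is thereby reduced to the earlier propositions.
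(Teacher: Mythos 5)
Your argument is correct, but it takes a genuinely different route from the paper's for the decisive step. Both proofs begin identically: from $P\cong C_p$ one reads off the cyclotomic multiplicities in $|F_4(q)|=q^{24}\Phi_1^4\Phi_2^4\Phi_3^2\Phi_4^2\Phi_6^2\Phi_8\Phi_{12}$ and concludes that $p$ must divide $\Phi_8(q)=q^4+1$ or $\Phi_{12}(q)=q^4-q^2+1$. The paper then finishes uniformly: it bounds $p^2\leqslant(q^4+1)^2\leqslant 3q^8$ and invokes Vasilyev's lower bound $\frac{(q^{12}-1)(q^4+1)}{q-1}\geqslant q^{15}$ for the minimal permutation degree of $F_4(q)$, so that $n_p(S)=|S:N_S(P)|\geqslant q^{15}>3q^8\geqslant p^2$ in both cases at once. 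You instead embed $P$ in a subsystem subgroup --- $\mathrm{Spin}_9(q)$ or $\Sp_8(q)$ when $p\mid\Phi_8(q)$, and ${}^3D_4(q)$ when $p\mid\Phi_{12}(q)$ --- and pull back the inequality $n_p\geqslant p^2$ from Propositions \ref{symplecticsimple}, \ref{orthoddsimple} and \ref{{}^3D_4(q)simple} via $n_p(H)\leqslant n_p(G)$. Your reduction is the same device the paper itself uses for $G_2(q)$, $E_6(q)$ and ${}^2E_6(q)$, and it buys independence from the minimal-degree theorem at the price of having to certify the subgroup structure (the isogeny type of the $B_4$ subgroup and the containment ${}^3D_4(q)\leqslant F_4(q)$) and the coprimality of $p$ to the relevant centres; the paper's version buys a one-line, case-free estimate at the price of an external citation. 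All the facts you rely on are standard and your divisibility bookkeeping checks out, so either proof stands.
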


\begin{proof}
Assume $S\cong F_4(q)$, then $$|S|=q^{24}(q^6-1)^2(q^2-1)^2(q^2+1)^2(q^4+1)(q^4-q^2+1).$$ Since the Sylow $p$-subgroup $P$ of $S$ is isomorphic to $C_p$, either $p\mid q^4+1$ or $p\mid q^4-q^2+1$ by Lemma \ref{sylowstructure}, so that $p^2\leqslant (q^4+1)^2\leqslant 3q^8$. Since $N_S(P)<S$, there exists a maximal subgroup $M$ of $S$ such that $N_S(P)\leqslant M$. Therefore, $n_p(S)=|S:N_S(P)|\geqslant |S:M|$, which is not less than the degree of minimal permutation representation of $F_4(q)$. In the light of \cite [Theorem 2]{vasily}, 
$$n_p(S)\geqslant \frac{(q^{12}-1)(q^4+1)}{(q-1)}\geqslant q^{15}>3q^8.$$ 
Thus $n_p(S)\geqslant p^2$, a contradiction.
\end{proof}

\begin{prop}\label{{}^2F_4(q)simple}
If $S\cong {}^2F_4(q)$ where $q=2^{2m+1}\geqslant 8$, then there does not exist $p\in \pi(S)$ such that $n_p(S)<p^2$.
\end{prop}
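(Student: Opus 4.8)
The plan is to use the cyclotomic factorisation of $|S|$ to see that $p$ must be attached to $\Phi_6(q)$ or $\Phi_{12}(q)$ (which keeps $p$ small), and then to contradict $n_p(S)<p^2$ with the far larger minimal index of a maximal subgroup of $S$. Since $S\cong{}^2F_4(2)'$ has already been disposed of, we may take $q=2^{2m+1}\geqslant 8$ and write
$$|S|=q^{12}(q-1)(q^3+1)(q^4-1)(q^6+1)=q^{12}\,\Phi_1(q)^2\,\Phi_2(q)^2\,\Phi_4(q)^2\,\Phi_6(q)\,\Phi_{12}(q),$$
using $q^3+1=\Phi_2(q)\Phi_6(q)$, $q^4-1=\Phi_1(q)\Phi_2(q)\Phi_4(q)$ and $q^6+1=\Phi_4(q)\Phi_{12}(q)$. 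By Lemma \ref{sylowstructure} the Sylow $p$-subgroup $P$ of $S$ is cyclic of order $p$, so $p$ is odd (as $|S|_2$ is a large power of $2$) and $p^2\nmid|S|$. Since $\Phi_1(q)=q-1$, $\Phi_2(q)=q+1$ and $\Phi_4(q)=q^2+1$ each occur squared in the displayed factorisation, $p$ divides none of them, and $p\nmid q$; hence $p\mid\Phi_6(q)=q^2-q+1$ or $p\mid\Phi_{12}(q)=q^4-q^2+1$. In either case $p\leqslant q^4-q^2+1<q^4$, so $p^2<q^8$.

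Now $S$ is simple, so $N_S(P)<S$ and $N_S(P)\leqslant M$ for some maximal subgroup $M$ of $S$; thus $n_p(S)=|S:N_S(P)|\geqslant|S:M|$, which is at least the minimal degree of a faithful permutation representation of $S$. By \cite[Theorem 2]{vasily} this degree equals $(q+1)(q^3+1)(q^6+1)$, attained on a maximal parabolic (the point set of the generalised octagon of ${}^2F_4(q)$). Hence
$$n_p(S)\geqslant(q+1)(q^3+1)(q^6+1)>q^{10}>q^8>p^2,$$
contradicting $n_p(S)<p^2$, so no such $p$ exists.

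The argument is routine, and there is no real obstacle; the only points needing care are the cyclotomic bookkeeping — one must check that $\Phi_1,\Phi_2,\Phi_4$ genuinely occur to the second power in $|S|$ while $\Phi_6,\Phi_{12}$ occur only to the first, so that $P\cong C_p$ really forces $p\mid\Phi_6(q)\Phi_{12}(q)$ — and quoting the correct minimal permutation degree of ${}^2F_4(q)$ for $q\geqslant 8$. If one prefers to avoid that reference, the contradiction can be reached as in Propositions \ref{unitarysimple}, \ref{suzukisimple} and \ref{R(q)simple}: when $p\mid q^2-q+1$ one has $p\neq 3$ (else $9\mid(q+1)^2$ divides $|S|$) and $\SU_3(q)\leqslant S$, so $n_p(S)\geqslant n_p(\SU_3(q))=n_p(\PSU_3(q))\geqslant p^2$ by Proposition \ref{unitarysimple}; and when $p\mid q^4-q^2+1$, $p$ divides a Coxeter-torus order $q^2\pm\sqrt{2q^3}+q\pm\sqrt{2q}+1<2q^2$ whose normaliser is a maximal subgroup with cyclic normal part of index $12$, so Lemma \ref{cyclicmaximal} gives $n_p(S)=|S:M|\geqslant|S|/(24q^2)>p^2$.
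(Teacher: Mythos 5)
Your argument is correct and follows essentially the same route as the paper: force $p$ to lie in $\Phi_6(q)$ or $\Phi_{12}(q)$ so that $p^2$ is small (the paper gets $p^2\leqslant 25q^4$, you get $p^2<q^8$), then beat it with the minimal permutation degree $(q+1)(q^3+1)(q^6+1)>q^{10}$ of ${}^2F_4(q)$. The one slip is the citation: that degree is \cite[Theorem 5]{vasilyev} (twisted exceptional groups), not \cite[Theorem 2]{vasily}, which treats $G_2$ and $F_4$.
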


\begin{proof}
Let $S\cong {}^2F_4(q)$. Then 
$$|S|=q^{12}(q^4-1)^2(q^2-q+1)(q^2+q+1+\sqrt{2q}(q+1))(q^2+q+1-\sqrt{2q}(q+1)).$$ 
Since $P\cong C_p$, $p$ is a divisor of one of the following numbers: $q^2+q+1+\sqrt{2q}(q+1)$, $q^2+q+1-\sqrt{2q}(q+1)$ or $q^2-q-1$. This implies that $p^2\leqslant (q^2+q+1+\sqrt{2q}(q+1))^2\leqslant 25q^4$.
On the other hand, since $S$ is a simple group, $N_S(P)$ must be contained in one of maximal subgroups of $S$. Therefore, $|S:N_S(P)|$ is not less than the degree of minimal permutation representation of ${}^2F_4(q)$.
By using \cite[Theorem 5]{vasilyev}, 
$$n_p(S)=|S:N_S(P)|\geqslant
(q^6+1)(q^3+1)(q+1)>q^{10}>25q^4\geqslant p^2,$$ a contradiction.
\end{proof}



\begin{prop}\label{E_6(q)simple}
If $S\cong E_6(q)$, then there does not exist $p\in \pi(S)$ such that $n_p(S)<p^2$.
\end{prop}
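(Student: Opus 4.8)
The plan is to follow Propositions~\ref{F_4(q)simple} and \ref{{}^2F_4(q)simple}: use $P\cong C_p$ together with the cyclotomic factorization of $|S|$ to bound $p^2$ from above, and bound $n_p(S)$ from below by the degree of a minimal faithful permutation representation of $E_6(q)$.

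First I would record that
$$|S|=\frac{1}{(3,q-1)}\,q^{36}(q^2-1)(q^5-1)(q^6-1)(q^8-1)(q^9-1)(q^{12}-1),$$
so that the generic order of $S$, written in cyclotomic polynomials, is $q^{36}\Phi_1^6\Phi_2^4\Phi_3^3\Phi_4^2\Phi_5\Phi_6^2\Phi_8\Phi_9\Phi_{12}$. By Lemma~\ref{sylowstructure} we have $P\cong C_p$, hence $p\neq r$ (otherwise $p^{36}\mid|S|$) and the $p$-part of $|S|$ equals $p$. Among $\Phi_1,\dots,\Phi_{12}$ only $\Phi_5,\Phi_8,\Phi_9,\Phi_{12}$ occur to the first power in the generic order, and a short computation shows that the denominator $(3,q-1)$ does not affect this: whenever $3\mid\Phi_3(q)$ or $3\mid\Phi_6(q)$ one still has $9\mid|S|$. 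Hence $p$ divides one of $\Phi_5(q)=q^4+q^3+q^2+q+1$, $\Phi_8(q)=q^4+1$, $\Phi_9(q)=q^6+q^3+1$, $\Phi_{12}(q)=q^4-q^2+1$, and therefore $p\leqslant\Phi_9(q)<2q^6$, so $p^2<4q^{12}$.

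Next, since $S$ is simple, $N_S(P)<S$, so $N_S(P)\leqslant M$ for some maximal subgroup $M$ of $S$; thus $n_p(S)=|S:N_S(P)|\geqslant|S:M|$ is at least the degree of a minimal faithful permutation representation of $E_6(q)$. By \cite{vasily} this degree equals
$$\frac{(q^{12}-1)(q^9-1)}{(q^4-1)(q-1)}=(q^8+q^4+1)(q^8+q^7+\cdots+q+1),$$
which exceeds $q^{16}$ for every $q\geqslant 2$. Combining the two estimates gives $n_p(S)>q^{16}>4q^{12}>p^2$, a contradiction, which proves the proposition.

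I expect the first step to be the main obstacle: one must verify carefully that $P\cong C_p$ forces $p$ to divide one of $\Phi_5(q),\Phi_8(q),\Phi_9(q),\Phi_{12}(q)$, that is, rule out $p\mid\Phi_d(q)$ for $d\in\{1,2,3,4,6\}$ (the indices whose cyclotomic polynomial has multiplicity $\geqslant 2$ in the generic order), including the bookkeeping with the small prime $p=3$ and the $(3,q-1)$ denominator. After that the argument is routine, and any lower bound of order $q^{16}$ for the minimal permutation degree of $E_6(q)$ closes the gap.
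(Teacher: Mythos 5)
Your proposal is correct and follows the same basic strategy as the paper (bound $p^2$ above via the cyclotomic factorization of $|S|$ and $P\cong C_p$, bound $n_p(S)$ below via the minimal permutation degree), but it differs in one genuine respect. The paper first splits off the case $p\mid q^9-1$ and handles it by locating a subgroup $H\cong \PSL_3(q^3)$ of $S$ (from Craven's tables) so that $n_p(S)\geqslant n_p(H)\geqslant p^2$ by Proposition~\ref{linearsimple}; only for the remaining divisors does it use the bound $p^2\leqslant (q^6+1)^2<3q^{12}$ against the minimal degree. You instead absorb $\Phi_9(q)=q^6+q^3+1$ into a single uniform estimate $p<2q^6$, $p^2<4q^{12}<q^{16}$, which does work (since $(q^6+q^3+1)^2<4q^{12}<q^{16}$ for all $q\geqslant 2$) and removes the need for the $\PSL_3(q^3)$ subgroup and the appeal to Proposition~\ref{linearsimple} altogether. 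This is a mild but real simplification. Your bookkeeping with the exponents of the $\Phi_d$ in the generic order and with the denominator $(3,q-1)$ is sound: the only prime that the denominator could affect is $p=3$, and in each of the relevant congruence classes of $q$ one still gets $9\mid |S|$, so $P\cong C_p$ indeed forces $p\mid \Phi_d(q)$ for some $d\in\{5,8,9,12\}$. The one concrete error is the citation: the minimal permutation degree $\frac{(q^{12}-1)(q^9-1)}{(q^4-1)(q-1)}=\frac{(q^9-1)(q^8+q^4+1)}{q-1}$ of $E_6(q)$ comes from \cite[Theorem 1]{Vasil} (the $E_6$, $E_7$, $E_8$ paper), not from \cite{vasily}, which covers only $G_2$ and $F_4$; with that reference corrected, your argument closes.
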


\begin{proof}
Let $S\cong E_6(q)$. Then 
$$|S|=\frac{q^{36}(q^{12}-1)(q^9-1)(q^8-1)(q^6-1)(q^5-1)(q^2-1)}{(3,q-1)}.$$
Obviously, $p$ does not divide $q-1$ since the Sylow $p$-subgroup $P$ of $S$ is isomorphic to $C_p$, that is $p\nmid(3,q-1)$. We first assume that $p|(q^9-1)$. By using  
\cite[Table 9]{craven} $S$ has a subgroup $H$ of type $\PSL_3(q^3)$, thus $n_p(S)\geqslant n_p(H)\geqslant p^2$ by Proposition \ref{linearsimple}, it is impossible. Suppose that $p$ is a prime divisor of $(q^{12}-1)(q^8-1)(q^6-1)(q^5-1)(q^2-1)$, then $p^2\leqslant (q^6+1)^2<3q^{12}$. On the other hand, $n_p(S)$ is not less than the degree of the minimal permutation representation of $E_6(q)$ since $N_S(P)<S$. With the help of \cite[Theorem 1] {Vasil} we have 
$$n_p(S)\geqslant \frac{(q^9-1)(q^8+q^4+1)}{q-1}>q^{16}>3q^{12}>p^2,$$
which is a contradiction.
\end{proof}

\begin{prop}\label{{}^2E_6(q)simple}
If $S\cong {}^2E_6(q)$, then there does not exist $p\in \pi(S)$ such that $n_p(S)<p^2$.
\end{prop}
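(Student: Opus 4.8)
The plan is to follow the same template already used for the other exceptional groups of Lie type, in particular the proof for $E_6(q)$ in Proposition \ref{E_6(q)simple}. First I would record the order
$$|{}^2E_6(q)|=\frac{q^{36}(q^{12}-1)(q^9+1)(q^8-1)(q^6-1)(q^5+1)(q^2-1)}{(3,q+1)},$$
and observe that since the Sylow $p$-subgroup $P\cong C_p$, $p$ cannot divide $q+1$ (so in particular $p\nmid(3,q+1)$), nor $q-1$, nor any $q^i-1$ or $q^i+1$ whose square would force $p^2\mid|S|$. This leaves two regimes for the cyclotomic factor that $p$ divides: a ``small'' one coming from $q^9+1$ (i.e. $p$ divides $q^9+1$, equivalently $\Phi_{18}(q)$ up to lower factors), which I would handle via a subgroup containment, and a ``large'' one where $p$ divides one of the remaining factors, which I would handle by the minimal-degree-of-permutation-representation bound.

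For the first regime, I would use the maximal-subgroup data (the analogue of \cite[Table 9]{craven} or \cite[Table 8.?]{Bray} for ${}^2E_6(q)$) to locate a subgroup $H$ of type $\PSU_3(q^3)$ inside ${}^2E_6(q)$; then $P\leqslant H$ since $p\mid q^9+1$ and $|q^6\cdot(q^2-1)(q^3+1)^2(q^6-q^3+1)|$ contains $q^9+1$ as a factor, so that $n_p(S)\geqslant n_p(H)=n_p(\PSU_3(q^3))$, and Proposition \ref{unitarysimple} gives $n_p(\PSU_3(q^3))\geqslant p^2$, a contradiction. (If the Tits-group-type small exceptions intervene for tiny $q$, I would clear them with \cite{GAP} as is done elsewhere.)

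For the second regime, $p$ divides $(q^{12}-1)(q^8-1)(q^6-1)(q^5+1)(q^2-1)$, hence $p^2\leqslant (q^6+1)^2<3q^{12}$. Since $S$ is simple, $N_S(P)<S$ lies in a maximal subgroup $M$, so $n_p(S)=|S:N_S(P)|\geqslant|S:M|$ is at least the minimal degree of a faithful permutation representation of ${}^2E_6(q)$. Invoking the known value of that minimal degree (the analogue for ${}^2E_6(q)$ of \cite[Theorem 1]{Vasil}), which is of the shape $\dfrac{(q^{12}-1)(q^6-q^3+1)(q^4+1)}{q-1}$ or similar and in any case exceeds $q^{16}$, I would conclude $n_p(S)\geqslant q^{16}>3q^{12}>p^2$, a contradiction. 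The main obstacle is purely bookkeeping: making sure I cite the correct minimal-degree formula for ${}^2E_6(q)$ and the correct maximal-subgroup table entry for the $\PSU_3(q^3)$ subgroup, and that the factorization of $|{}^2E_6(q)|$ into cyclotomic pieces is split correctly between the two regimes so that every prime $p$ with $P\cong C_p$ is covered; the inequalities themselves are routine.
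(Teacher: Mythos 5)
Your proposal follows essentially the same route as the paper: the case $p\mid q^9+1$ is handled by locating a subgroup of type $\PSU_3(q^3)$ (the paper cites \cite[Table 10]{craven}) and invoking Proposition \ref{unitarysimple}, while the remaining case uses $p^2\leqslant(q^6+1)^2<3q^{12}$ together with the minimal permutation degree of ${}^2E_6(q)$ from \cite[Theorem 4]{vasilyev}, whose formula $\frac{(q^{12}-1)(q^6-q^3+1)(q^4+1)}{q-1}$ you correctly anticipated. The only differences are cosmetic (the paper bounds the degree below by $q^{15}$ rather than $q^{16}$, which suffices equally well), so your argument is correct and matches the paper's.
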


\begin{proof}
Let $S\cong {}^2E_6(q)$. Then
$$|S|=\frac{q^{36}(q^{12}-1)(q^9+1)(q^8-1)(q^6-1)(q^5+1)(q^2-1)}{(3,q+1)}.$$ 
Observe that $p$ does not divide $(3,q+1)$. We first suppose that $p$ divides $q^9-1$. By \cite[Table 10]{craven} $S$ has a subgroup $H$ of type $\PSU_3(q^3)$, so that $n_p(S)\geqslant n_p(H)\geqslant p^2$ with the help of Proposition \ref{unitarysimple}, a contradiction. We next suppose that $p$ divides $(q^{12}-1)(q^8-1)(q^6-1)(q^5+1)(q^2-1)$, then $p^2\leqslant (q^6+1)^2\leqslant 3q^{12}$. Similarly as above, $n_p(S)$ is not less than the smallest index of a maximal subgroup of $S$. In the light of \cite[Theorem 4]{vasilyev}
$$n_p(S)\geqslant \frac{(q^{12}-1)(q^6-q^3+1)(q^4+1)}{q-1}>q^{15}>3q^{12}\geqslant p^2,$$
a contradiction.
\end{proof}

\begin{prop}\label{E_7(q)simple}
If $S\cong E_7(q)$, then there does not exist $p\in \pi(S)$ such that $n_p(S)<p^2$.
\end{prop}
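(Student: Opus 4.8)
The plan is to follow the same template already used for the other exceptional groups of large rank, namely Propositions~\ref{F_4(q)simple}, \ref{E_6(q)simple} and \ref{{}^2E_6(q)simple}. Since $P\cong C_p$ by Lemma~\ref{sylowstructure}, the prime $p$ divides exactly one of the cyclotomic factors $\Phi_d(q)$ appearing in $|E_7(q)|$; recall that the relevant $d$ lie in $\{1,2,3,4,5,6,7,8,9,10,12,14,18\}$ (the degrees occurring in the order polynomial $q^{63}\prod (q^{d_i}-1)$ with $d_i\in\{2,6,8,10,12,14,18\}$ up to the factor $(2,q-1)$). First I would dispose of the two small degrees: $p\mid q-1$ is impossible because then $p\mid(2,q-1)$ would force $p=2$, contradicting $P\cong C_p$ having odd order (as $(p,2r)=1$ here, $r$ being the defining characteristic); and if $p\mid q+1$ one still has $p^2\le (q+1)^2$, which is tiny compared with any index of a maximal subgroup.

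Next I would split into two regimes according to the size of $\Phi_d(q)$. For the ``large'' cyclotomic factors — those with $\phi(d)$ large, i.e.\ $d\in\{7,9,14,18\}$ and also $d\in\{12,8,10,5\}$ — I would bound $p^2\le (\Phi_d(q))^2<(4q^{\phi(d)})^2=16q^{2\phi(d)}$ via \cite[Lemma 2.1(f)]{glasby}, and compare with the minimal degree of a faithful permutation representation of $E_7(q)$, which is known (Vasilyev's work on minimal permutation degrees of exceptional groups) to be of the form $(q^{14}-1)(q^9+1)(q^5-1)/(q-1)$ or a similar polynomial of degree roughly $q^{27}$. Since $N_S(P)<S$ lies inside a maximal subgroup $M$, we get $n_p(S)=|S:N_S(P)|\ge |S:M|\ge q^{27}$ (say), which dwarfs $16q^{2\phi(d)}$ because $\phi(d)\le 6$ for every admissible $d$, hence $2\phi(d)\le 12<27$. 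For the ``structural'' cases where the factor is small but the Sylow torus sits inside a proper reductive subgroup — concretely $p\mid q^9-1$ (so $p\mid\Phi_9(q)$ or $p\mid\Phi_3(q)$) giving a subgroup of type $\SL_3(q^3)$ or $\PSL_8(q)$-flavoured Levi, and $p\mid q^7-1$ giving a subgroup containing $\SL_7(q)$ or $\SL_8(q)$ — I would instead invoke the subgroup structure from \cite[Table 8 or 9]{craven} (or the list of maximal-rank subgroups) to find $H\le S$ with $n_p(H)\ge p^2$ already by Proposition~\ref{linearsimple} or Proposition~\ref{unitarysimple}, whence $n_p(S)\ge n_p(H)\ge p^2$.

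The main obstacle I anticipate is purely bookkeeping: assembling the correct, citable lower bound for the minimal index of a maximal subgroup of $E_7(q)$ (and making sure the cited polynomial is valid for all $q$, including the small values $q=2,3$), and making sure that for \emph{every} admissible cyclotomic degree $d$ either the generic permutation-degree bound applies or a suitable linear/unitary subgroup is available — in particular checking that no exceptional coincidence (analogous to $n_{13}(\PSL_3(3))=144$ in Proposition~\ref{linearsimple}) can sneak through for small $q$. I would handle those few residual small cases $(q,p)$ by a direct computation in GAP \cite{GAP} or a character-table lookup in \cite{CCNPW}, exactly as was done for $G_2(3)$ in Proposition~\ref{reesimple}. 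Once every $d$ is covered, the conclusion is that no $p\in\pi(S)$ satisfies $n_p(S)<p^2$, completing the proof.
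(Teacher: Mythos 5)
Your plan is sound and rests on the same engine as the paper's proof: bound $p^2$ polynomially from above and compare with the minimal permutation degree of $E_7(q)$ from Vasilyev's theorem, which is $\frac{(q^{14}-1)(q^9+1)(q^5+1)}{q-1}\geqslant q^{27}$. The difference is that the paper needs none of your case division by cyclotomic degree and none of the ``structural'' subgroup arguments: since every factor of $|E_7(q)|$ is of the form $q^{2k}-1$ with $k\leqslant 9$, any odd prime divisor satisfies the crude bound $p\leqslant q^9+1$, hence $p^2<3q^{18}<q^{27}\leqslant n_p(S)$ uniformly, with no residual small-$q$ cases to check and no need for \cite[Lemma 2.1(f)]{glasby}, for Craven's subgroup tables, or for GAP. (Your finer bound $p^2<16q^{2\phi(d)}\leqslant 16q^{12}$ would also work for \emph{every} admissible $d$, so even within your own scheme the $\SL_3(q^3)$/$\SL_7(q)$ detours are superfluous; contrast this with $E_6(q)$, where the paper really does need the $\PSL_3(q^3)$ subgroup because the odd exponent $9$ appears directly in the order formula.) One small slip: your dismissal of $p\mid q-1$ via ``$p\mid(2,q-1)$ forces $p=2$'' is not a valid inference; the correct reason is that $q-1$ divides all seven factors $q^{d_i}-1$, so $p\mid q-1$ would give $p^7\mid |S|$, contradicting $P\cong C_p$ from Lemma \ref{sylowstructure}. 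None of this affects the correctness of the conclusion, only the economy of the argument.
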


\begin{proof}
Let $S\cong E_7(q)$. Then 
$$|S|=q^{63}(q^{18}-1)(q^{14}-1)(q^{12}-1)(q^{10}-1)(q^8-1)(q^6-1)(q^2-1).$$
Since $p$ is a prime divisor of $|S|$, $p^2\leqslant (q^9+1)^2<3q^{18}$. $S$ is a simple group, so $N_S(P)$ must be contained in one maximal subgroups of $S$. Therefore, $n_p(S)=|S:N_P(S)|$ is not less than the smallest index of a maximal subgroup of $S$. By \cite[Theorem 2]{Vasil}, 
$$n_p(S)\geqslant \frac{(q^{14}-1)(q^9+1)(q^5+1)}{(q-1)}\geqslant q^{27}>3q^{18}>p^2,$$
which is a contradiction.
\end{proof}

\begin{prop}\label{E_8(q)simple}
If $S\cong E_8(q)$, then there does not exist $p\in \pi(S)$ such that $n_p(S)<p^2$.
\end{prop}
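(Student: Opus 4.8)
The plan is to argue exactly as in the preceding exceptional-group cases (compare Propositions \ref{F_4(q)simple}, \ref{E_6(q)simple} and \ref{E_7(q)simple}): use the cyclic structure of a Sylow $p$-subgroup to bound $p^2$ from above by a small power of $q$, and bound $n_p(S)$ from below by the smallest index of a maximal subgroup of $E_8(q)$, which is vastly larger. First I would record the order
$$|S|=q^{120}(q^{30}-1)(q^{24}-1)(q^{20}-1)(q^{18}-1)(q^{14}-1)(q^{12}-1)(q^8-1)(q^2-1).$$

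Next, suppose for contradiction that $p\in\pi(S)$ satisfies $n_p(S)<p^2$. By Lemma \ref{sylowstructure} the Sylow $p$-subgroup $P$ of $S$ is cyclic of order $p$; in particular $p$ is odd and $p\nmid q$, so $p$ divides $q^m-1$ for some $m\in\{2,8,12,14,18,20,24,30\}$. Since every such $m$ is even, writing $m=2\ell$ we have $q^m-1=(q^\ell-1)(q^\ell+1)$ with $\gcd(q^\ell-1,q^\ell+1)\mid 2$; as $p$ is odd it divides exactly one of $q^\ell-1$, $q^\ell+1$, and hence $p\leqslant q^\ell+1\leqslant q^{15}+1$. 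Therefore $p^2\leqslant(q^{15}+1)^2<2q^{30}$.

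On the other hand, since $S$ is simple, $N_S(P)<S$ lies in some maximal subgroup $M$ of $S$, so $n_p(S)=|S:N_S(P)|\geqslant|S:M|$ is at least the degree of a minimal faithful permutation representation of $E_8(q)$. By \cite[Theorem 3]{Vasil},
$$n_p(S)\geqslant\frac{(q^{30}-1)(q^{24}-1)(q^{20}-1)}{(q^{10}-1)(q^6-1)(q-1)}=\frac{q^{30}-1}{q-1}\cdot\frac{q^{24}-1}{q^6-1}\cdot\frac{q^{20}-1}{q^{10}-1}>q^{29}\cdot q^{18}\cdot q^{10}=q^{57}.$$
Since $q^{57}>2q^{30}>p^2$, this is a contradiction, and the proof is complete.

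There is no genuine obstacle here; the argument runs parallel to the earlier exceptional cases. The two points needing a little care are getting the list of invariant degrees $\{2,8,12,14,18,20,24,30\}$ right in the order formula --- all are even, which is exactly what licenses the estimate $p\leqslant q^{15}+1$, and this is the step where the cruder bound $p\leqslant q^{30}-1$ would fail, since the minimal permutation degree of $E_8(q)$ is only of size $q^{57}$ --- and quoting the correct statement in \cite{Vasil} for that minimal degree (the $E_8$ analogue of the results used above for $E_6$ and $E_7$), for which the lower bound $q^{57}$ amply suffices.
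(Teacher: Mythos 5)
Your proposal is correct and follows essentially the same route as the paper: bound $p^{2}$ above by a constant times $q^{30}$ using the cyclic Sylow $p$-subgroup (all invariant degrees being even gives $p\leqslant q^{15}+1$), and bound $n_p(S)$ below by the minimal permutation degree of $E_8(q)$ from \cite[Theorem 3]{Vasil}. Your expression $\frac{(q^{30}-1)(q^{24}-1)(q^{20}-1)}{(q^{10}-1)(q^{6}-1)(q-1)}$ is algebraically identical to the standard form $\frac{(q^{30}-1)(q^{12}+1)(q^{10}+1)(q^{6}+1)}{q-1}$ of that degree (the paper's displayed $(q^{20}-1)$ in the first factor looks like a typo for $(q^{30}-1)$), and either version comfortably exceeds $p^{2}$.
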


\begin{proof}
If $S\cong E_8(q)$, then
$$|S|=q^{120}(q^{30}-1)(q^{24}-1)(q^{20}-1)(q^{18}-1)(q^{14}-1)(q^{12}-1)(q^8-1)(q^2-1).$$ 
Clearly, $p^2\leqslant (q^{15}+1)^2<3q^{30}$ and $n_p(S)$ is not less than the smallest index of a maximal subgroup of $S$. By using \cite[Theorem 3]{Vasil}
$$n_p(S)\geqslant \frac{(q^{20}-1)(q^{12}+1)(q^{10}+1)(q^6+1)}{(q-1)}> q^{47}>3q^{30}>p^2,$$
a contradiction.
\end{proof}

\section{Proof of Theorem \ref{main}}

In this section, we will prove Theorem \ref{main} by the following theorem due to M. Hall.

\begin{thm}[Theorem 2.2 of \cite{MHJ}]\label{MHJ}
The number $n_p(G)$ of Sylow $p$-subgroups in a finite group $G$ is the product of factors of the following two kinds:
\begin{enumerate}
    \item the number $s_p$ of Sylow $p-$subgroups in a simple group $X$;
    \item a prime power $q^t$ where $q^t\equiv 1(\bmod~p)$.
\end{enumerate}
\end{thm}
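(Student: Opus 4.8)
The plan is to argue by induction on $|G|$, driving the induction with the factorisation of Lemma \ref{normal} applied to a minimal normal subgroup. For the base cases: if $G$ is a $p$-group then $n_p(G)=1$ is the empty product, and if $G$ is simple then $n_p(G)=s_p$ is a factor of kind (1). So I would assume $G$ is neither, fix $P\in\mathrm{Syl}_p(G)$, and choose a minimal normal subgroup $K$, which is proper since $G$ is not simple. Lemma \ref{normal} then gives $n_p(G)=n_p(G/K)\,n_p(K)\,n_p(C)$ with $C=N_{PK}(P\cap K)/(P\cap K)$. As $|G/K|<|G|$ and $|K|<|G|$, the induction hypothesis expresses $n_p(G/K)$ and $n_p(K)$ as products of factors of kinds (1) and (2); in particular a nonabelian minimal normal subgroup $K=T_1\times\cdots\times T_m$ contributes $n_p(K)=s_p^{m}$, the kind-(1) factors. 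Everything therefore comes down to the single remaining factor $n_p(C)$.

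The first real step is to pin down the structure of $C$. Writing $L=N_{PK}(P\cap K)$, I would note that $P\cap K\in\mathrm{Syl}_p(K)$, that $P\leqslant L$ (since $P\cap K\unlhd P$), and hence that $K\cap L=N_K(P\cap K)$ is normal in $L$ with $P\cap K$ as a full Sylow $p$-subgroup; so $(K\cap L)/(P\cap K)$ is a $p'$-group. A Frattini argument in $PK$ gives $PK=KL$, so $C$ modulo this $p'$-group is isomorphic to $PK/K$, a $p$-group. Thus $C$ has a normal $p$-complement, with Sylow $p$-subgroup the image of $P$, and the whole theorem reduces to the following self-contained claim $(\ast)$: if $H$ has a normal $p$-complement $N$ and $P_0\in\mathrm{Syl}_p(H)$, then $n_p(H)$ is a product of prime powers each congruent to $1$ modulo $p$. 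The degenerate case $p\nmid|K|$, where $P\cap K=1$ and $C=K\rtimes P$ outright, is handled by the same claim.

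Proving $(\ast)$ is where I expect the real work to lie. Since $N\cap P_0=1$ and $N\unlhd H$, a one-line commutator computation shows $N_N(P_0)=C_N(P_0)$, and the Frattini argument gives $n_p(H)=|N:C_N(P_0)|$, a $p'$-number. The idea is then to control this index one prime at a time. For a fixed prime $\ell$ (necessarily $\ell\neq p$), the coprime action of $P_0$ on $N$ lets me pick a $P_0$-invariant $R\in\mathrm{Syl}_\ell(N)$ with $C_R(P_0)\in\mathrm{Syl}_\ell(C_N(P_0))$, so the $\ell$-part of $n_p(H)$ equals $|R:C_R(P_0)|=\ell^{a}$ for some $a$. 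Counting the $P_0$-orbits on $R$, where every nontrivial orbit has size a positive power of $p$, yields $|R|\equiv|C_R(P_0)|\pmod p$; since $p\nmid|C_R(P_0)|$ this forces $\ell^{a}\equiv 1\pmod p$. Hence every prime-power part of $n_p(H)$ is $\equiv 1\pmod p$, which is exactly $(\ast)$. The main obstacle is precisely this upgrade: the familiar congruence $|R|\equiv|C_R(P_0)|\pmod p$ by itself only records that the total $n_p(H)$ is $\equiv 1\pmod p$, and the point is to see that applying it separately inside each $P_0$-invariant Sylow subgroup delivers the per-prime statement the theorem demands. Feeding $(\ast)$ back shows $n_p(C)$ is a product of kind-(2) factors, and together with the inductive factorisations of $n_p(G/K)$ and $n_p(K)$ this closes the induction.
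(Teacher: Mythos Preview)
The paper does not supply a proof of this statement: Theorem~\ref{MHJ} is quoted as Theorem~2.2 of Hall's paper~\cite{MHJ} and used as a black box in the proof of Theorem~\ref{main}. There is therefore no proof in this paper to compare your proposal against.

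That said, your argument is correct and in fact recovers Hall's original strategy: induct via the factorisation of Lemma~\ref{normal} with $K$ a minimal normal subgroup, observe that $C=N_{PK}(P\cap K)/(P\cap K)$ has the normal $p'$-subgroup $N_K(P\cap K)/(P\cap K)$ with $p$-group quotient $PK/K$, and then handle the $p$-nilpotent factor $C$ via your claim~$(\ast)$. The proof of~$(\ast)$ is sound: $n_p(H)=|N:C_N(P_0)|$ by Frattini and the commutator identity, and the per-prime congruence $\ell^a\equiv 1\pmod p$ follows from orbit-counting for $P_0$ on a $P_0$-invariant $R\in\mathrm{Syl}_\ell(N)$. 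The one place you lean on an unstated fact is the assertion that such an $R$ exists with $C_R(P_0)\in\mathrm{Syl}_\ell(C_N(P_0))$; this is the standard coprime-action package (as in~\cite[8.2.3, 8.2.6]{HKBS}), and once quoted the argument is complete.
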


Let $G$ be a finite group with $n_p(G)<p^2$. Assume that $n_p(G)$ is not a power of prime. In order to prove Theorem \ref{main}, it is enough to prove that  $n_p(G)=n_p(S)$ for some simple group $S$ with $n_p(S)<p^2$. By Theorem \ref{MHJ} we have $$n_p(G)=q_1^{t_1}q_2^{t_2}\cdots q_m^{t_m}\cdot n_p(S_1)n_p(S_2)\cdots n_p(S_n)$$ where $S_i$ is a simple group for $1\leqslant i\leqslant n$ and $q_j^{t_j}\equiv 1(\bmod~p)$ for $1\leqslant j\leqslant m$. Obviously, if $q_j^{t_j}$ and $n_p(S_i)$ are not equal to $1$ then they are greater than $p$, for any $i,j$. Since $n_p(G)<p^2$ is not a power of prime, $q_1^{t_1}\cdots q_m^{t_m}=1$ and there exists at most one index $i$ such that $n_p(S_i)\neq 1$. Therefore $n_p(G)=n_p(S_i)$ where $S_i$ is a simple group with $n_p(S_i)<p^2$.

Conversely, a power of a prime $r$ which is congruent to 1 mod $p$ can become a Sylow $p$-number. In fact it is one of a Frobenius group with the complement of the cyclic group of order $p$ and the kernel of elementary abelian $r$-group. \hfill$\square$

\section{The proof of Theorem \ref{maincorollary}}

In this section, we will prove Theorem \ref{maincorollary}. Recall that, if $G$ is a $p$-solvable group, meaning that it admits a series of normal subgroups $1=V_0<\ldots<V_n=G$ such that each factor $V_{i+1}/V_i$ is either a $p$-group or a $p'$-group. It is easy to see that normal subgroups and the quotient groups of $p$-solvable group are also $p$-solvable.

\begin{lemma}\label{equ}
    Let $a=1+r_1p$, $b=1+r_2p<p^2$ where $r_1, r_2$ are integers. If $a|b$, then $a=1$ or $a=b$.
\end{lemma}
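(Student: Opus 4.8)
The plan is to exploit the two facts that $a \equiv 1 \pmod p$ and $a \mid b$ with $b < p^2$ in combination. First I would write $b = ac$ for some positive integer $c$, so $c = b/a$. Since $a > 0$ and $b < p^2$, if $a \geq p$ then $a = 1 + r_1 p$ forces $a \geq 1 + p$ (as $r_1 \geq 1$ when $a \neq 1$), and then $c = b/a < p^2/(1+p) < p$, so $c \leq p-1$. The key observation is that $c$ must also be congruent to $1$ modulo $p$: reducing $b = ac$ modulo $p$ gives $1 \equiv 1 \cdot c \pmod p$, hence $c \equiv 1 \pmod p$.

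Now I would combine $c \equiv 1 \pmod p$ with the bound $c \leq p-1$. The only positive integer that is both $\leq p-1$ and congruent to $1$ modulo $p$ is $c = 1$ itself. Hence $c = 1$ and $b = a$. The remaining case is $a < p$; but $a = 1 + r_1 p < p$ forces $r_1 = 0$, i.e.\ $a = 1$. This disposes of all cases: either $a = 1$ or $a = b$, as claimed.

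I do not anticipate a serious obstacle here — the argument is a short modular/size computation. The only mild care needed is to handle the degenerate situation $r_1 = 0$ (equivalently $a=1$) separately at the start, so that when we divide by $a$ we are dividing by something $\geq 1 + p$; and to make sure the strict inequality $b < p^2$ is used correctly to get $c < p$ rather than $c \leq p$. One could phrase it even more cleanly: write $b = ac$, note $c \equiv 1 \pmod p$, so either $c = 1$ (giving $a = b$) or $c \geq 1 + p$, in which case $b = ac \geq a(1+p) \geq (1+p)^2 > p^2$ whenever $a \geq 1 + p$, contradicting $b < p^2$; and if instead $a = 1$ we are already done. This makes the contradiction the crux and keeps the case analysis minimal.
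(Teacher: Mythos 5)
Your proof is correct and follows essentially the same route as the paper: both arguments write $b=ac$ (the paper calls the cofactor $d$), observe that the cofactor is congruent to $1$ modulo $p$, and then use the bound $b<p^2$ to rule out the cofactor being $\geqslant 1+p$, leaving $a=1$ or $a=b$. The only difference is cosmetic — you deduce $c\equiv 1\pmod p$ by reducing $b=ac$ directly, while the paper extracts it from the identity $r_2=\frac{d-1}{p}+dr_1$.
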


\begin{proof}
    We assume that $a\neq 1$, that is $r_1\neq 0$. Since $a$ divides $b$, we can write $1+r_2p=d(1+r_1p)$ where $d$ is an integer. Thus $r_2=\frac{d-1}{p}+dr_1$. Note that $r_2$, $dr_1$ are integers, so $\frac{d-1}{p}$ is also an integer, this implies that $d=1$ or $d>p$. If $d>p$, then $1+r_2p=d(1+r_1p)>p^2$, a contradiction. Therefore $d=1$. 
\end{proof}

First, we prove that if $n_p(G)$ is a power of a prime then $G$ is $p$-solvable. To prove our result, we can prove that if $G$ is not a $p$-solvable group, then $n_p(G)$ is not a power of a prime. Since $G$ is not a $p$-solvable group, $G$ is nonsolvable. Therefore, 
$G$ has a nonsolvable chief factor $S^m$ such that $p$ divides $|S|^m$ where $S$ is a nonabelian simple group and $m\geqslant 1$. Obviously, $n_p(S)\neq 1$. In the light of Lemma \ref{normal} we have $n_p(S)$ divides $n_p(G)$. Moreover, we can get $n_p(S)=n_p(G)$ by Lemma \ref{equ}. On the other hand, $n_p(G)<p^2$, thus $n_p(S)$ satisfies one of items (1), (2) and (3) of Theorem \ref{sylowsimple}. If $n_p(S)=1+p=q^a$ for some prime $q$ and integer $a$, then $q=2$, $a$ is a prime number, that is $p=2^a-1$ is a Mersenne prime number, a contradiction. If $n_p(G)=1+\frac{(p-3)p}{2}=q^a$ where $p$ is a Fermat prime number, $q$ is prime and $a$ is an integer, then $p^2-3p+2=2q^a$. By Lemma \ref{number}, we have $p=3$, contradicting $p>3$. Therefore, $n_p(G)$ is not a power of a prime. This proves that if $G$ is not a $p$-solvable group then $n_p(G)$ is not a power of a prime.

Conversely, let $G$ be a $p$-solvable group. We assume that $G$ is a minimal order such that $n_p(G)$ is not a power of a prime. Write $n_p(G)=q^{t_1}_1q^{t_2}_2\cdots q^{t_k}_k$ is the prime factorization of $n_p(G)$. We claim that $G$ is nonsolvable. If this were not the case, then  $q^{t_i}_i\equiv 1(\bmod ~p)$ for any $i=1,\ldots,k$ (see \cite{PHall}). 
By our assumption that $n_p(G)$ is not a power of a prime, $n_p(G)$ has at least two prime divisors. This implies that $n_p(G)>p^2$, a contradiction.
It is not difficult to see that $G$ is not simple because otherwise $p\not\in \pi(G)$, so that $n_p(G)=1$, a contradiction.

Let $M$ be a maximal normal subgroup of $G$, then either $G/M$ is a cyclic group of order $p$ or $G/M\cong S$ where $p$ does not divide $|S|$. Thus $n_p(G/M)=1$. By Lemma \ref{normal} we have 
$$n_p(G)=n_p(G/M)n_p(M)n_p(N_{PM}(P\cap M)/P\cap M),$$ 
where $P$ is a Sylow $p$-subgroup of $G$. Observe that $n_p(G)$ is divisible by $n_p(M)$, so $n_p(M)=1$ or $n_p(M)=n_p(G)$ with the help of Lemma \ref{equ}. Since $M$ is a $p$-solvable group and $|M|<|G|$, by the minimality of $G$, $n_p(M)=1$. Thus 
$$n_p(G)=n_p(N_{PM}(P\cap M)/P\cap M).$$

Assume $p$ does divide $|G/M|$, then $P\leqslant M$. Moreover, $P\unlhd M$. Thus $$N_{PM}(P\cap M)/P\cap M=N_M(P)/P=M/P,$$ this implies that $p\not\in \pi(M/P)$, thus $n_p(G)=n_p(M/P)=1$, contradicting the fact that $n_p(G)$ is not a power of a prime.

Assume $G/M\cong C_p$, then $P\cap M$ is a Sylow $p$-subgroup of $M$. Since $n_p(M)=1$, $P\cap M\unlhd M$. Obviously, $P\cap M\unlhd P$ because $M$ is normal in $G$, thus $N_{PM}(P\cap M)=PM$. Therefore $$N_{PM}(P\cap M)/P\cap M=PM/P\cap M.$$ Now we have a normal series $$P\cap M\unlhd M\unlhd PM,$$ thus $M/P\cap M\unlhd PM/P\cap M$. Since $p$ does not divides $|M/M\cap P|$, the Sylow $p$-subgroup of $PM/M\cap P$ is $P/M\cap P$, which is isomorphic to $C_p$ because $P/M\cap P\cong MP/M=G/M$. Note that 
$$PM/P\cap M\cong (M/M\cap P)(P/M\cap P)\cong (M/M\cap P)C_p$$ 
and $p$ does not divide $M/(M\cap P)$, thus $PM/M\cap P$ is a $p$-solvable group. Recall that $n_p(G)=n_p(PM/M\cap P)$, the minimality of $G$ implies that $|PM/M\cap P|=|G|$. Therefore $G=MP$ and $P\cap M=1$, that is $$G\cong M\rtimes P\cong M\rtimes C_p.$$ 
Observe that $(|M|,|P|)=1$, so the action of $P$ on $M$ is coprime. For any $r\in \pi(M)$, there exists a $P$-invariant Sylow $r$-subgroup $R$ of $M$ by \cite [8.2.3(a)] {HKBS}, this implies that $RP\leqslant G$.
In the light of \cite [Theorem A] {GN}, $n_p(RP)$ divides $n_p(G)$, thus $n_p(RP)=1$ or $n_p(RP)=n_p(G)$ with the help of Lemma \ref{equ}. Assume $n_p(G)=n_p(RP)$. Since $RP$ is a $p$-solvable group, by the minimality of $G$ we can get $RP=G$, contradicting the nonsolvability of $G$. Hence $n_p(RP)=1$, implying that $RP=R\times P$. Since $$M=\prod\limits_{r\in \pi(M)}R,$$ $G=M\times P$, and hence $n_p(G)=1$, a contradiction. \hfill$\square$


\begin{thebibliography}{99}

\bibitem{MHJ} {M. Hall. \emph{On the number of Sylow subgroups in a finite group.} Journal of Algebra, 7(3):363-371, 1967.}

\bibitem{zhang} {J. P. Zhang. \emph{Sylow numbers of finite groups.} Journal of Algebra, 176(1):111-123, 1995.}

\bibitem{GN} {G. Navarro. \emph{Number of Sylow subgroups in $p$-solvable groups.} Proceeding of the American Mathematical Society, 131(10):3019-3020, 2003.}

\bibitem{Litianze} {T. Li and Y. Liu. \emph{Mersenne primes and solvable Sylow numbers.} Journal of Algebra and its Application, 15(9):1650163, 2016.}

\bibitem{GV} {W. Guo and E. P. Vdovin. \emph{Number of Sylow subgroups in finite groups.} Journal of Group Theory, 21(4):695-712, 2018.}

\bibitem{Wu} {Z. Wu. \emph{On the number of Sylow subgroups in finite simple groups.} Journal of Algebra and Its Applications, 20(07):2150115, 2021.}

\bibitem{KZ} {K. Zsigmondy. \emph{Zur theorie der potenzreste.} Monatshefte f \"{u}r Mathematik, 3:265-284, 1892.}

\bibitem{Isaacs} {I. M. Isaacs. \emph{ Algebra: a graduate course, volume 100.} American Mathematical Society, 2009.}

\bibitem{Brauer} {R. Brauer and W. F. Reynolds. \emph{On a problem of {E}.{A}rtin.} Annals of Mathematics, 68(3):713-720, 1958.}

\bibitem{CCNPW} {J. H. Conway, R. T. Curtis, S. P. Norton, R. A. Parker, and R. A. Wilson. \emph{Atlas of finite groups.} Oxford University Press, Eynsham, 1985.}

\bibitem{LW} {L. Weisner. \emph{On the Sylow Subgroups of the Symmetric and Alternating Groups.} American Journal of Mathematics, 47(2):121-124, 1925.}

\bibitem{Gross} {F. Gross. \emph{Hall subgroups of order not divisible by 3.} The Rocky Mountain Journal of Mathematics, 23(2):569–591, 1993.}

\bibitem{glasby} {S. P. Glasby, F. L\"{u}beck, A. C. Niemeyer, and C. E. Praeger. \emph{Primitive prime divisors and the $n$-th cyclotomic polynomial.} Journal of the Australian Mathematical Society, 102:122–135, 2017.}

\bibitem{GAP} {The GAP Group. \emph{GAP-Groups, Algorithms, and Programming.} Version 4.7.8.}

\bibitem {Bray} {J. N. Bray, D. F. Holt, and C. M. Roney-Dougal. \emph{The maximal subgroups of the low-dimensional finite classical groups, volume 407.} Cambridge university press, 2013.}

\bibitem{Wilson} {R. Wilson. \emph{The finite simple groups, volume 147.} Springer, 2009.}

\bibitem{Carter} {R. W. Carter. \emph{Simple Groups of Lie Type.} John Wiley and Sons, New York, 1972.}

\bibitem{Vasi} {A. V. Vasil'ev. \emph{Normalizers of Sylow subgroups in finite linear and unitary groups.} Algebra and Logic, 59(1):1–18, 2020.}

\bibitem{Vasiliev} {A. V. Vasiliev and E. P. Vdovin. \emph{An adjacency criterion for the prime graph of a finite simple group.} Algebra and Logic, 44(6):381–406, 2005.}

\bibitem{Ahan} {N. Ahanjideh and A. Iranmanesh. \emph{On the Sylow normalizers of some simple classical groups.} Bulletin of the Malaysian Mathematical Sciences Society, 35(2):459-467, 2012.}

\bibitem{HKBS} {H. Kurzweil and B. Stellmacher. \emph{The Theory of Finite Groups: An Introduction.} Springer, 2004.}

\bibitem{vasily} {A. V. Vasilyev. \emph{Minimal permutation representations of finite simple exceptional groups of types $G_2$ and $F_4$.} Algebra and Logic, 35(6):371–383, 1996.}

\bibitem{vasilyev} {A. V. Vasilyev. \emph{Minimal permutation representations of finite simple exceptional twisted groups.} Algebra and Logic, 37(1):9–20, 1998.}

\bibitem{craven} {D. A. Craven. \emph{The maximal subgroups of the exceptional groups $F_4(q)$, $E_6(q)$ and ${}^2E_6(q)$ and related almost simple groups.} Inventiones Mathematicae, 234:637–719, 2023.}

\bibitem{Vasil} {A. V. Vasilyev. \emph{Minimal permutation representations of finite simple exceptional groups of types $E_6$, $E_7$, and $E_8$.} Algebra and Logic, 36(5):302–310, 1997.}

\bibitem{PHall} {P. Hall. \emph{A note on soluble groups.} Journal of the London Mathematical Society, s1-3(2):98–105, 1928.}










\end{thebibliography}

\end{document}